\newcommand{\ms}{\mathcal{S}}
\newcommand{\tf}{\tilde{f}}
\newtheorem{thm}{Theorem}[section]
\newtheorem{lmm}[thm]{Lemma}
\newtheorem{cor}[thm]{Corollary}
\newtheorem{prop}[thm]{Proposition}
\newcommand{\lx}{\ell(X)}
\newcommand{\cp}{\mathcal{P}}
\newcommand{\cs}{\mathcal{S}}
\newcommand{\mx}{\mathcal{X}}
\newcommand{\ra}{\rightarrow}
\newcommand{\rr}{\mathbb{R}}
\newcommand{\smallavg}[1]{\langle #1 \rangle}
\newcommand{\tr}{\operatorname{Tr}}
\newcommand{\zz}{\mathbb{Z}}
\newcommand{\fs}{\mathbb{S}}
\newcommand{\fd}{\mathbb{D}}
\newcommand{\ftw}{\mathbb{T}}
\newcommand{\fst}{\mathbb{M}}
\numberwithin{equation}{section}
\begin{document}
\title[The $1/N$ expansion at strong coupling]{The $1/N$ expansion for $SO(N)$ lattice gauge theory at strong coupling}
\author{Sourav Chatterjee}
\address{\newline Department of Statistics \newline Stanford University\newline Sequoia Hall, 390 Serra Mall \newline Stanford, CA 94305\newline \textup{\tt souravc@stanford.edu}}
\author{Jafar Jafarov}
\address{\newline Department of Mathematics \newline Stanford University\newline 450 Serra Mall, Building 380 \newline Stanford, CA 94305\newline \textup{\tt jafarov@stanford.edu}}
\thanks{Research partially supported by NSF grant DMS-1441513}
\keywords{Gauge-string duality, lattice gauge theory, Yang--Mills, string theory, matrix integral, asymptotic series expansion}
\subjclass[2010]{70S15, 81T13, 81T25, 82B20}

\begin{abstract}
The $1/N$ expansion is an asymptotic series expansion for certain quantities in large-$N$ lattice gauge theories. This article gives a rigorous formulation and proof of the $1/N$ expansion for Wilson loop expectations in $SO(N)$ lattice gauge theory in the strong coupling regime in any dimension. The terms in the expansion are expressed as sums over trajectories of strings in a lattice string theory, establishing an explicit gauge-string duality. The trajectories trace out surfaces of genus zero for the first term in the expansion, and surfaces of higher genus for the higher terms. 
\end{abstract}
\maketitle

\section{Introduction}\label{intro}
Quantum Yang--Mills theories are the basic building blocks of the Standard Model of quantum mechanics. In an effort to gain a better theoretical understanding of quantum Yang--Mills theories, Wilson~\cite{wilson74} introduced a discretized version of these theories that are now known as lattice gauge theories. Lattice gauge theories are mathematically well-defined objects, which are supposed to converge to quantum Yang--Mills theories in the scaling limit as the lattice spacing tends to zero. The existence of the scaling limit, however, is a famous open problem in mathematical physics that was posed as one of the seven millennium prize problems by the Clay Institute in the year 2000.

A lattice gauge theory requires two ingredients: a compact Lie group,  called the gauge group of the theory, and a lattice, usually taken to be $\zz^d$ for some $d\ge 2$. For the convenience of the reader, we will now give a quick definition of lattice gauge theory for the gauge group $SO(N)$. 

Let $d\geq 2$ be a positive integer and let $E$ be the directed nearest-neighbor edges of $d$-dimensional integer lattice $\zz^d$. For any edge $e \in E$ let $u(e)$ and $v(e)$ respectively be the starting and ending points of $e$. We say an edge $e$ is positively oriented if $u(e)$ is lexicographically smaller than $v(e)$, and negatively oriented otherwise. Let $E^+$ and $E^-$ be the sets of positively and negatively oriented edges in $E$. If $e=(u,v)$, we will denote the edge $(v,u)$ by $e^{-1}$. 

A sequence of edges $l=e_1e_2\cdots e_n$ is called a closed loop if $v(e_i)=u(e_{i+1})$ for each $i<n$ and $v(e_n)=u(e_1)$.  A closed loop of length four  is called a plaquette if $e_i\neq e_j^{-1}$ for all $i,j$. Let $\cp$ be the set of all plaquettes. A plaquette $p=e_1e_2e_3e_4$ is called positively oriented if $u(e_1)$ is lexicographically the smallest among all vertices in $p$ and $v(e_1)$ is the second smallest. Let $\cp^+$ be the set of all positively oriented plaquettes. 

Let $SO(N)$ be the group of $N\times N$ orthogonal matrices with determinant 1,  and let $\sigma_N$ be the Haar measure on $SO(N)$. Let $\Lambda$ be a finite subset of $\zz^d$ and $\beta$ be a real number. Let $E^+_{\Lambda}$ be the set of all positively oriented edges with both endpoints in $\Lambda$. Take a configuration $Q$ of $SO(N)$ matrices indexed by the elements of $E_\Lambda^+$. For each $e\in E_\Lambda^+$, let $Q_{e^{-1}} := Q_e^{-1}$.  For any plaquette $p=e_1e_2e_3e_4$ let $Q_p:=Q_{e_1}Q_{e_2}Q_{e_3}Q_{e_4}$. Define a probability measure $\mu_{\Lambda, N,\beta}$ on the set of all such configurations as 
\[
d\mu_{\Lambda, N, \beta}(Q):=Z^{-1}_{\Lambda, N, \beta}\exp\biggl(N\beta\sum_{p\in \cp^{+}_{\Lambda}}\tr(Q_p)\biggr)\prod\limits_{e\in E^{+}_{\Lambda}}d\sigma_N(Q_e)\,,
\]
where $\cp^{+}_{\Lambda}$ is the set of all positively oriented plaquettes lying completely inside $\Lambda$ and $Z_{\Lambda, N, \beta}$ is the normalizing constant (usually called the partition function). This probability measure defines a lattice gauge theory on $\Lambda$ for the group $SO(N)$. The number $\beta$ is called the inverse coupling constant of the model. In the lingo of lattice gauge theories, ``strong coupling'' means small $\beta$, and ``weak coupling'' means large $\beta$.

For any function $f$ defined on the set of configurations, denote the expectation of $f$ with respect to above probability measure by $\smallavg{f}_{\Lambda, N, \beta}$. That is,
\begin{equation}\label{expdef}
\smallavg{f}_{\Lambda, N, \beta} := \int f(Q) d\mu_{\Lambda, N, \beta}(Q)\, .
\end{equation}
We will omit $\Lambda$, $N$, and $\beta$ from the subscript and write $\smallavg{f}$ whenever they are clear from the context.

The primary objects of interest in lattice gauge theories are Wilson loop variables and their expectations. For any closed loop $l=e_1e_2\ldots e_n$  lying completely inside $\Lambda$, the Wilson loop variable $W_l$ is defined as 
\[
W_{l}:=\tr(Q_{e_1}Q_{e_2}\cdots Q_{e_n})
\]
and its expectation $\smallavg{W_l}$ is defined using \eqref{expdef}. 

The goal of this paper is to understand some aspects of lattice gauge theories {\it before} taking the continuum limit. This has been a challenging field in itself for more than forty years. A famous open problem is the mass gap conjecture, which claims that if $p$ and $q$ are two plaquettes, then the correlation between the random variables $W_p$ and $W_q$ decays exponentially in the distance between $p$ and $q$, for any value of the inverse coupling strength $\beta$. Another important conjecture is quark confinement, which claims that as the loop $l$ gets larger, $-\log\smallavg{W_l}$ increases at least as fast as the minimum surface area enclosed by $l$.  

Some of the earliest rigorous results about lattice gauge theories were obtained by L\"uscher~\cite{luscher77}, who constructed strictly positive self-adjoint transfer matrices for a large class of lattice gauge theories, and by Osterwalder and Seiler~\cite{osterwalderseiler78}, who proved the quark confinement and mass gap properties at sufficiently strong coupling. Guth~\cite{guth80} proved that in four-dimensional $U(1)$ lattice gauge theory, quark confinement breaks down at sufficiently weak coupling. A simpler proof of Guth's theorem was given by Fr\"ohlich and Spencer~\cite{frohlichspencer82}. G\"opfert and Mack~\cite{gopfertmack82} proved the deep result that three-dimensional $U(1)$ lattice gauge theory is quark confining at all coupling strengths. For some recent developments and further pointers to the mathematical literature on lattice gauge theories --- especially the substantial literature on continuum limits, which is not reviewed here --- see \cite{chatterjee15, chatterjee16}.

A physics method for performing theoretical computations in lattice gauge theories that is particularly relevant for this paper was proposed by 't Hooft~\cite{thooft74}, who considered $U(N)$ lattice gauge theory for large $N$, and calculated an asymptotic series expansion for the partition function of the theory in powers of $1/N$. This is known as 't Hooft's $1/N$ expansion. 't Hooft's work revealed a connection between large matrix integrals and the enumeration of planar maps. This connection has been subsequently deeply explored by mathematicians working in a variety of areas. One of the earliest attempts at making rigorous mathematics out of 
't Hooft's idea was due to Ercolani and McLaughlin~\cite{ercolanimclaughlin03, ercolanimclaughlin08}. Alice Guionnet and her collaborators have developed the map enumeration idea and its implications in random matrix theory and free probability in a long sequence of papers~\cite{collinsetal09, guionnet04, guionnet06, guionnetetal12, guionnetmaida05,  guionnetnovak14, guionnetzeitouni02}. In another direction, the 't Hooft expansion has been used in developing the theory of topological recursion and its applications in enumerative geometry by Eynard~\cite{eynard03, eynard05} and Eynard and Orantin~\cite{eynardorantin07, eynardorantin09}.

The goal of this paper is to give a complete rigorous statement and proof of the $1/N$ expansion for Wilson loop expectations in $SO(N)$ lattice gauge theory at strong coupling, in any dimension. More precisely, we will present a sequence of functions $f_0, f_1,\ldots$ on the space of all closed loops in $\zz^d$ such that for any closed loop $l$ and any $k\ge 0$,
\begin{equation}\label{1nexp}
\frac{\smallavg{W_l}}{N} = f_0(l) + \frac{1}{N}f_1(l) + \frac{1}{N^2}f_2(l) + \cdots + \frac{1}{N^{k}}f_k(l) + o\biggl(\frac{1}{N^{k}}\biggr)
\end{equation}
as $N\ra\infty$, provided that $|\beta|$ is small enough (depending on $k$). Here $\smallavg{W_l}$ is the expectation of the Wilson loop variable associated with the loop $l$ in $SO(N)$ lattice gauge theory on $\zz^d$ at inverse coupling strength $\beta$, as defined previously. Partial progress on this question was made in \cite{chatterjee15}, where the function $f_0$ was identified as a sum over trajectories in a certain kind of string theory on the lattice $\zz^d$. This result gave a precise mathematical meaning to a simple instance of what is broadly known as ``gauge-string duality'' in the physics literature. Gauge-string duality refers to a conjectured duality between string theories and quantum gauge theories that had been floating around in the physics literature ever since the work of 't Hooft~\cite{thooft74}, before being transformed into an enormously popular area of research after the seminal work of \cite{maldacena97}. The topic, however, is not yet a part of rigorous mathematics except for the paper \cite{chatterjee15} cited above, as far as we know. 

This paper develops the program initiated in \cite{chatterjee15}, by identifying each $f_j$ in the expansion \eqref{1nexp} as a sum over trajectories in a lattice string theory. There are two major differences with the earlier work. First, the proofs are based on significantly more complicated three-fold induction arguments than the proofs in \cite{chatterjee15}, which used two-fold induction. The second and more important point is that while the string theory used to define $f_0$ in \cite{chatterjee15} allowed strings to only split and deform, thereby tracing out surfaces of genus zero, the string theory used to define $f_j$ for a general $j$ allows strings that have previously split to merge together at a later time. This has the consequence that the surfaces traced out by these strings can have genus greater than zero, which are substantially more difficult to handle. 

The proofs in this paper, as in the previous one, start with a rigorous version of a set of equations derived by Makeenko and Migdal~\cite{makeenkomigdal79}. These are special cases of the class of equations that are broadly known as Schwinger--Dyson equations in the random matrix literature. The validity of the  Makeenko--Migdal equations in the continuum limit of two-dimensional lattice gauge theories was rigorously established by L\'evy~\cite{levy11}, with subsequent simpler proofs and extensions by Dahlqvist~\cite{dahlqvist} and Driver et al.~\cite{driveretal2, driveretal}. 

In dimensions higher than two, the Makeenko--Migdal equations are notoriously difficult to analyze. In the physics literature, a famous attempt at simplifying the Makeenko--Migdal equations for higher dimensional lattice gauge theories was made by Eguchi and Kawai~\cite{eguchikawai82}. There were some problems with the Eguch--Kawai reduction, which were addressed by Bhanot, Heller and Neuberger~\cite{bhanotetal82}, and later by Gonzalez-Arroyo and Okaway~\cite{gonzalez83}, who proposed the twisted Eguchi--Kawai model. In recent years, there has been a spate of new breakthroughs in this area, initiated by the works of Kovtun, \"Unsal and Yaffe~\cite{kovtunetal} and \"Unsal and Yaffe~\cite{uy08}. For a survey of these recent developments, see Dunne and \"Unsal~\cite{du16}. 

On the rigorous mathematical side, the only formulation and proof of the Makeenko--Migdal equations in dimensions higher than two is the one given in  \cite{chatterjee15}, where the key new idea was that instead of considering the expectation of a single Wilson loop variable, one should consider expectations of products of Wilson loop variables and develop recursive equations for these expectations. The second step was to write down the solutions of  these equations as sums over trajectories in a string theory on the lattice. The strong coupling condition was required to ensure the convergence of this infinite sum. The technical aspects of carrying out this program, however, are quite complex. The basic steps, required for deriving the formula for the first term in the $1/N$ expansion, were executed in \cite{chatterjee15}. The present paper develops this further and gets the formulas for all the terms in the $1/N$ expansion. It may be worth emphasizing here that this paper is not merely a simple extension of \cite{chatterjee15}; substantial complications arise in going beyond the first term of the expansion.

The next section introduces the lattice string theory that describes the terms in the $1/N$ expansion. The main result is presented in Section \ref{result}.


\section{A string theory on the lattice}\label{stringsec}
For the reader's convenience, we will now restate the definition of the lattice string theory that was defined in \cite{chatterjee15}, together with some additional definitions that are needed for this manuscript. Recall that $E$ denotes the set of edges of $\zz^d$. A path $\rho$ in $\zz^d$ is defined to be a sequence of edges $\rho = e_1e_2\cdots e_n\in E$ such that $v(e_i)=u(e_{i+1})$ for $i=1,2,\ldots, n-1$. We will say that $\rho$ has length $|\rho| = n$. The path $\rho$ will be called closed if $v(e_n)= u(e_1)$. We also define a null path, $\emptyset$, that has no edges and therefore has length zero. By definition, the null path is closed. The ``inverse'' of the path $\rho$ is defined as 
\[
\rho^{-1}:= e_n^{-1}e_{n-1}^{-1}\cdots e_1^{-1}\, .
\]
If $\rho' = e_1'\cdots e_m'$ is another path such that $v(e_n)=u(e_1')$, then the concatenated path $\rho\rho'$ is defined~as 
\[
\rho\rho' := e_1\cdots e_n e_1'\cdots e_m'\,.
\]
If $\rho=e_1\cdots e_n$ is a closed path, we will say that another closed path $\rho'$ is cyclically equivalent to $\rho$ if 
\[
\rho' = e_i e_{i+1}\cdots e_n e_1e_2\cdots e_{i-1}
\]
for some $2\le i\le n$. The equivalence classes will be called cycles. The length of a cycle $l$, denoted by $|l|$, is defined to be equal to the length of any of the closed paths in $l$. The null cycle is defined to be the equivalence class containing only the null path, and its length is defined to be zero.

If $\rho = e_1\cdots e_n$ is a path, then $e_k$ will be called the edge at the $k^{\mathrm{th}}$ location of $\rho$. For cycles, the first edge is defined by some arbitrary rule. Once the first edge is defined, the definition of the $k^{\mathrm{th}}$  edge is automatic. 

We will say that a closed path $\rho = e_1e_2\cdots e_n$ has a backtrack at location $i$ if $e_{i+1}=e_i^{-1}$, where $e_{n+1}$ means $e_1$. 
Given a path $\rho=e_1e_2\cdots e_n$ that has a backtrack at location $i$, the path obtained by erasing this backtrack is defined as
\[
\rho' := e_1e_2\cdots e_{i-1}e_{i+2}\cdots e_n\, .
\]
It is easy to check that $\rho'$ is indeed a path, and it is closed if $\rho$ is closed. If  $\rho$ is closed and has a backtrack at location $n$, then backtrack erasure at location $n$ results in
\[
\rho' := e_2e_3\cdots e_{n-1}\, .
\]
Again, it is easy to check that $\rho'$ is a closed path. If we start with a closed path $\rho$ and keep erasing backtracks successively, we must at some point end up with a closed path with no backtracks. Such a path will be called a nonbacktracking closed path, and its cyclical equivalence class will be called a nonbacktracking cycle, since each member of such an equivalence class is nonbacktracking. Nonbacktracking cycles will be called loops. This is a more precise definition of a loop than the one given in Section \ref{intro}. The null cycle is by definition a loop, called the null loop. 

It was shown in \cite{chatterjee15} that  the nonbacktracking closed path obtained by successively erasing backtracks from a closed path is unique up to cyclical equivalence, irrespective of the order in which backtracks are erased. Using this result, the nonbacktracking core of a cycle $l$ is defined to be the unique loop obtained by successive backtrack erasures until there are no more backtracks. The nonbacktracking core of a cycle $l$ will be denoted by $[l]$. Figure \ref{corefig} illustrates a nonbacktracking core obtained by succesively erasing all backtracks.

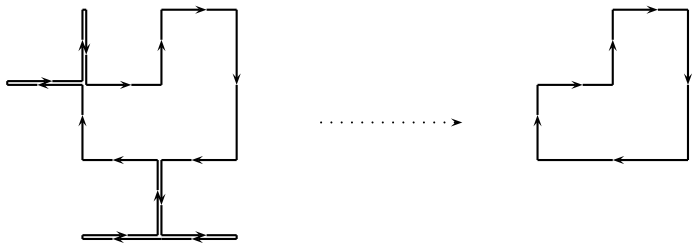
\begin{figure}[t]
\begin{pspicture}(1,.5)(11,4.5)
\psset{xunit=1cm,yunit=1cm}
\psline{->}(2,3)(2.6,3)
\psline{-}(2.6,3)(3,3)
\psline{->}(3,3)(3,3.6)
\psline{-}(3,3.6)(3,4)
\psline{->}(3,4)(3.6,4)
\psline{-}(3.6,4)(4,4)
\psline{->}(4,4)(4,3)
\psline{-}(4,3)(4, 2)
\psline{->}(4,2)(3.4,2)
\psline{-}(3.4,2)(3,2)
\psline{->}(3,2)(3,1.4)
\psline{-}(3,1.4)(3,1)
\psline{->}(3,1)(3.6,1)
\psline{-}(3.6,1)(4,1)
\psline{-}(4,1)(4,0.95)
\psline{->}(4,0.95)(3.4,0.95)
\psline{-}(3.4,0.95)(3,0.95)
\psline{->}(3,0.95)(2.35, 0.95)
\psline{-}(2.35,0.95)(1.95, 0.95)
\psline{-}(1.95, 0.95)(1.95,1)
\psline{->}(1.95,1)(2.55,1)
\psline{-}(2.55,1)(2.95, 1)
\psline{->}(2.95, 1)(2.95, 1.6)
\psline{-}(2.95, 1.6)(2.95, 2)
\psline{->}(2.95, 2)(2.35, 2)
\psline{-}(2.35, 2)(1.95, 2)
\psline{->}(1.95,2)(1.95, 2.6)
\psline{-}(1.95, 2.6)(1.95, 3)
\psline{->}(1.95, 3)(1.35, 3)
\psline{-}(1.35, 3)(.95, 3)
\psline{-}(.95, 3)(.95, 3.05)
\psline{->}(.95, 3.05)(1.55, 3.05)
\psline{-}(1.55, 3.05)(1.95, 3.05)
\psline{->}(1.95,3.05)(1.95, 3.6)
\psline{-}(1.95, 3.6)(1.95,4)
\psline{-}(1.95,4)(2,4)
\psline{->}(2,4)(2,3.4)
\psline{-}(2,3.4)(2,3)
\psline[linestyle = dotted]{->}(5, 2.5)(7, 2.5)
\psline{->}(8,3)(8.6,3)
\psline{-}(8.6,3)(9,3)
\psline{->}(9,3)(9,3.6)
\psline{-}(9,3.6)(9,4)
\psline{->}(9,4)(9.6,4)
\psline{-}(9.6,4)(10,4)
\psline{->}(10,4)(10,3)
\psline{-}(10,3)(10, 2)
\psline{->}(10,2)(9,2)
\psline{-}(9,2)(8,2)
\psline{->}(8,2)(8, 2.6)
\psline{-}(8, 2.6)(8, 3)
\end{pspicture}
\caption{A loop and its nonbactracking core obtained by backtrack erasures.}
\label{corefig}
\end{figure}

If $(l_1,\ldots, l_n)$ and $(l'_1,\ldots, l'_m)$ are two finite sequences of loops, we will say that they are equivalent if one can be obtained from the other by deleting and inserting null loops at various locations. Let $\ms$ denote the set of equivalence classes. Individual loops are also considered as members of $\ms$ by the natural inclusion. An element of $\ms$ will be called a loop sequence. The null loop sequence is the equivalence class of the null loop.  Any non-null loop sequence $s$ has a representative member $(l_1,\ldots, l_n)$ that has no null loops. This will be called the minimal representation of $s$. The length of  a loop sequence $s = (l_1,\ldots, l_n)$  is defined as $|s| := |l_1|+\cdots +|l_n|$. 

We will now define some operations on loops. These are rules for getting new loops out of old ones. There are five kinds of operations, called merger, deformation, splitting, twisting and inaction. The first four kinds of operations have two subtypes each, called positive and negative.

\textbf{Merger:} Let $l$ and $l'$ be two non-null loops. Let $x$ be a location in $l$ and $y$ be a location in $l'$. If $l$ contains an edge $e$ at location $x$ and $l'$  contains the same edge $e$ at location $y$, then write $l = aeb$  and $l' = ced$ (where $a,b,c,d$ are paths), and define the positive merger of $l$ and $l'$ at locations $x$ and $y$ as 
\[
l\oplus_{x,y} l' := [a e dceb]\,,
\]
where $[aedceb]$ is the nonbacktracking core of the cycle $aedceb$, and define the negative merger of $l$ and $l'$ at locations $x$ and $y$ as 
\[
l\ominus_{x,y} l' := [ac^{-1}d^{-1}b]\, .
\]
If $l$ contains an edge $e$ at location $x$ and $l'$ contains the inverse edge $e^{-1}$ at location $y$, write $l = aeb$ and $l' = ce^{-1} d$ and define the positive merger of $l$ and $l'$ at locations $x$ and $y$ as
\[
l\oplus_{x,y} l' := [ae c^{-1} d^{-1} e b]\, ,
\]
and define the negative merger of $l$ and $l'$ at locations $x$ and $y$ as
\[
l\ominus_{x,y} l' := [adcb]\, ,
\]
Figure \ref{posstfig} illustrates a positive merger and Figure \ref{negstfig} illustrates a negative merger.

\begin{figure}[t]
\begin{pspicture}(1,1)(11,4.5)
\psset{xunit=1cm,yunit=1cm}
\psline{->}(1,1)(1,2)
\psline{-}(1,2)(1,3)
\psline{->}(1,3)(2,3)
\psline{-}(2,3)(3,3)
\psline{->}(3,3)(3,2.4)
\psline{-}(3,2.4)(3,2)
\psline{->}(3,2)(2.4,2)
\psline{-}(2.4,2)(2,2)
\psline{->}(2,2)(2,1.4)
\psline{-}(2,1.4)(2,1)
\psline{->}(2,1)(1.4,1)
\psline{-}(1.4,1)(1,1)
\psline{->}(5,2)(5,2.6)
\psline{-}(5,2.6)(5,3)
\psline{->}(5,3)(5.6,3)
\psline{-}(5.6,3)(6,3)
\psline{->}(6,3)(6, 3.6)
\psline{-}(6,3.6)(6,4)
\psline{->}(6,4)(5, 4)
\psline{-}(5,4)(4,4)
\psline{->}(4,4)(4,3.4)
\psline{-}(4,3.4)(4,3)
\psline{->}(4,3)(4,2.4)
\psline{-}(4,2.4)(4,2)
\psline{->}(4,2)(4,1.4)
\psline{-}(4,1.4)(4,1)
\psline{->}(4,1)(5,1)
\psline{-}(5,1)(6,1)
\psline{->}(6,1)(6,1.6)
\psline{-}(6,1.6)(6,2)
\psline{->}(6,2)(5.4,2)
\psline{-}(5.4,2)(5,2)

\psline[linestyle = dotted]{->}(6.75, 2.5)(8.25, 2.5)
\psline{->}(9,1)(9,2)
\psline{-}(9,2)(9,3)
\psline{->}(9,3)(10,3)
\psline{-}(10,3)(11.05,3)
\psline{->}(11.05,3)(11.05,2.4)
\psline{-}(11.05,2.4)(11.05,2)
\psline{->}(11,2)(10.4,2)
\psline{-}(10.4,2)(10,2)
\psline{->}(10,2)(10,1.4)
\psline{-}(10,1.4)(10,1)
\psline{->}(10,1)(9.4,1)
\psline{-}(9.4,1)(9,1)
\psline{->}(12,2)(12,2.6)
\psline{-}(12,2.6)(12,3)
\psline{->}(12,3)(12.6,3)
\psline{-}(12.6,3)(13,3)
\psline{->}(13,3)(13, 3.6)
\psline{-}(13,3.6)(13,4)
\psline{->}(13,4)(12, 4)
\psline{-}(12,4)(11,4)
\psline{->}(11,4)(11,3.4)
\psline{-}(11,3.4)(11,3)
\psline{->}(11,3)(11,2.4)
\psline{-}(11,2.4)(11,2)
\psline{->}(11.05,2)(11.05,1.4)
\psline{-}(11.05,1.4)(11.05,1)
\psline{->}(11.05,1)(12,1)
\psline{-}(12,1)(13,1)
\psline{->}(13,1)(13,1.6)
\psline{-}(13,1.6)(13,2)
\psline{->}(13,2)(12.4,2)
\psline{-}(12.4,2)(12,2)
\rput(3.5,2.5){$\oplus$}
\rput(2.8,2.5){$e$}
\rput(4.2,2.5){$e$}
\rput(10.8,2.5){$e$}
\end{pspicture}
\caption{Positive merger.}
\label{posstfig} 
\end{figure}
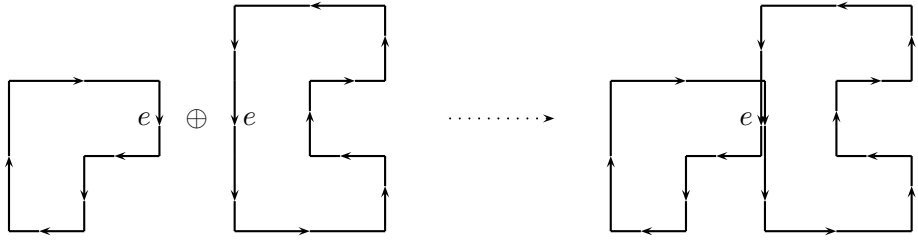

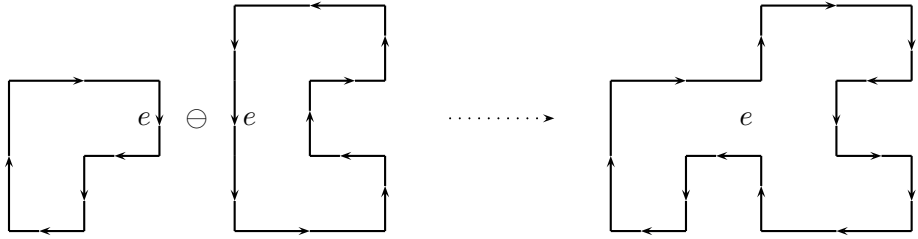
\begin{figure}[t]
\begin{pspicture}(1,1)(11,4.5)
\psset{xunit=1cm,yunit=1cm}
\psline{->}(1,1)(1,2)
\psline{-}(1,2)(1,3)
\psline{->}(1,3)(2,3)
\psline{-}(2,3)(3,3)
\psline{->}(3,3)(3,2.4)
\psline{-}(3,2.4)(3,2)
\psline{->}(3,2)(2.4,2)
\psline{-}(2.4,2)(2,2)
\psline{->}(2,2)(2,1.4)
\psline{-}(2,1.4)(2,1)
\psline{->}(2,1)(1.4,1)
\psline{-}(1.4,1)(1,1)
\psline{->}(5,2)(5,2.6)
\psline{-}(5,2.6)(5,3)
\psline{->}(5,3)(5.6,3)
\psline{-}(5.6,3)(6,3)
\psline{->}(6,3)(6, 3.6)
\psline{-}(6,3.6)(6,4)
\psline{->}(6,4)(5, 4)
\psline{-}(5,4)(4,4)
\psline{->}(4,4)(4,3.4)
\psline{-}(4,3.4)(4,3)
\psline{->}(4,3)(4,2.4)
\psline{-}(4,2.4)(4,2)
\psline{->}(4,2)(4,1.4)
\psline{-}(4,1.4)(4,1)
\psline{->}(4,1)(5,1)
\psline{-}(5,1)(6,1)
\psline{->}(6,1)(6,1.6)
\psline{-}(6,1.6)(6,2)
\psline{->}(6,2)(5.4,2)
\psline{-}(5.4,2)(5,2)

\psline[linestyle = dotted]{->}(6.75, 2.5)(8.25, 2.5)
\psline{->}(9,1)(9,2)
\psline{-}(9,2)(9,3)
\psline{->}(9,3)(10,3)
\psline{-}(10,3)(11,3)
\psline{->}(11,2)(10.4,2)
\psline{-}(10.4,2)(10,2)
\psline{->}(10,2)(10,1.4)
\psline{-}(10,1.4)(10,1)
\psline{->}(10,1)(9.4,1)
\psline{-}(9.4,1)(9,1)
\psline{-}(12,2.4)(12,2)
\psline{->}(12,3)(12,2.4)
\psline{-}(12.4,3)(12,3)
\psline{->}(13,3)(12.4,3)
\psline{-}(13,3.4)(13, 3)
\psline{->}(13,4)(13,3.4)
\psline{-}(12,4)(13, 4)
\psline{->}(11,4)(12,4)
\psline{-}(11,3.6)(11,4)
\psline{->}(11,3)(11,3.6)
\psline{-}(11,1.6)(11,2)
\psline{->}(11,1)(11,1.6)
\psline{-}(12,1)(11,1)
\psline{->}(13,1)(12,1)
\psline{-}(13,1.4)(13,1)
\psline{->}(13,2)(13,1.4)
\psline{-}(12.6,2)(13,2)
\psline{->}(12,2)(12.6,2)
\rput(3.5,2.5){$\ominus$}
\rput(2.8,2.5){$e$}
\rput(4.2,2.5){$e$}
\rput(10.8,2.5){$e$}
\end{pspicture}
\caption{Negative merger.}
\label{negstfig} 
\end{figure}

\textbf{Deformation:}  
If a loop $l'$ is produced by merging a plaquette with a loop $l$, we will say that $l'$ is a deformation of $l$. If the merger is positive we will say that $l'$ is a positive deformation, whereas if the merger is negative we will say that $l'$ is a negative deformation. Since a plaquette cannot contain an edge $e$ or its inverse at more than one location, we will use the notations $l \oplus_x p$ and $l\ominus_x p$ to denote the loops obtained by merging $l$ and $p$ at locations $x$ and $y$, where $y$ is the unique location in $p$ where $e$ or $e^{-1}$ occurs, where $e$ is the edge occurring at location $x$ in $l$. Also, we will denote the set of positively oriented plaquettes containing an edge  $e$ or its inverse by $\cp^+(e)$. Figure~\ref{posdefig} illustrates a positive deformation. A negative deformation is illustrated in Figure~\ref{negdefig}.

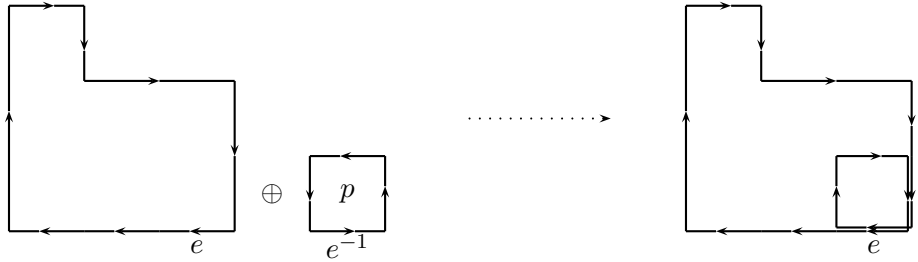
\begin{figure}[t]
\begin{pspicture}(1,1)(11,4.5)
\psset{xunit=1cm,yunit=1cm}
\psline{->}(1,1)(1,2.6)
\psline{-}(1,2.6)(1,4)
\psline{->}(1,4)(1.6,4)
\psline{-}(1.6,4)(2,4)
\psline{->}(2,4)(2, 3.4)
\psline{-}(2, 3.4)(2,3)
\psline{->}(2,3)(3,3)
\psline{-}(3,3)(4,3)
\psline{->}(4,3)(4,2)
\psline{-}(4,2)(4,1)
\psline{->}(4, 1)(3.4,1)
\psline{-}(3.4,1)(3,1)
\psline{->}(3, 1)(2.4,1)
\psline{-}(2.4,1)(2,1)
\psline{->}(2, 1)(1.4,1)
\psline{-}(1.4,1)(1,1)

\psline{->}(5,1)(5.6,1)
\psline{-}(5.6,1)(6,1)
\psline{->}(6,1)(6,1.6)
\psline{-}(6,1.6)(6,2)
\psline{->}(6,2)(5.4,2)
\psline{-}(5.4,2)(5,2)
\psline{->}(5,2)(5,1.4)
\psline{-}(5,1.4)(5,1)

\psline[linestyle = dotted]{->}(7, 2.5)(9, 2.5)

\psline{->}(10,1)(10,2.6)
\psline{-}(10,2.6)(10,4)
\psline{->}(10,4)(10.6,4)
\psline{-}(10.6,4)(11,4)
\psline{->}(11,4)(11, 3.4)
\psline{-}(11, 3.4)(11,3)
\psline{->}(11,3)(12,3)
\psline{-}(12,3)(13,3)
\psline{->}(13,3)(13,2.4)
\psline{-}(13,2.4)(13,2)
\psline{->}(13,2)(13,1.4)
\psline{-}(13,1.4)(13,1.05)
\psline{->}(13,1.05)(12.4,1.05)
\psline{-}(12.4,1.05)(12,1.05)
\psline{->}(12,1.05)(12,1.6)
\psline{-}(12,1.6)(12,2)
\psline{->}(12,2)(12.6,2)
\psline{-}(12.6,2)(12.95,2)
\psline{->}(12.95,2)(12.95,1.4)
\psline{-}(12.95,1.4)(12.95,1)
\psline{->}(12.95, 1)(12.4,1)
\psline{-}(12.4,1)(12,1)
\psline{->}(12, 1)(11.4,1)
\psline{-}(11.4,1)(11,1)
\psline{->}(11, 1)(10.4,1)
\psline{-}(10.4,1)(10,1)

\rput(12.5, 0.8){$e$}
\rput(3.5, 0.8){$e$}
\rput(5.5, 0.8){$e^{-1}$}
\rput(5.5, 1.5){$p$}
\rput(4.5, 1.5){$\oplus$}
\end{pspicture}
\caption{Positive deformation.}
\label{posdefig}
\end{figure}

\begin{figure}[t]
\begin{pspicture}(1,1)(11,4.5)
\psset{xunit=1cm,yunit=1cm}
\psline{->}(1,1)(1,2.6)
\psline{-}(1,2.6)(1,4)
\psline{->}(1,4)(1.6,4)
\psline{-}(1.6,4)(2,4)
\psline{->}(2,4)(2, 3.4)
\psline{-}(2, 3.4)(2,3)
\psline{->}(2,3)(3,3)
\psline{-}(3,3)(4,3)
\psline{->}(4,3)(4,2)
\psline{-}(4,2)(4,1)
\psline{->}(4, 1)(3.4,1)
\psline{-}(3.4,1)(3,1)
\psline{->}(3, 1)(2.4,1)
\psline{-}(2.4,1)(2,1)
\psline{->}(2, 1)(1.4,1)
\psline{-}(1.4,1)(1,1)

\psline{->}(5,1)(5.6,1)
\psline{-}(5.6,1)(6,1)
\psline{->}(6,1)(6,1.6)
\psline{-}(6,1.6)(6,2)
\psline{->}(6,2)(5.4,2)
\psline{-}(5.4,2)(5,2)
\psline{->}(5,2)(5,1.4)
\psline{-}(5,1.4)(5,1)

\psline[linestyle = dotted]{->}(7, 2.5)(9, 2.5)

\psline{->}(10,1)(10,2.6)
\psline{-}(10,2.6)(10,4)
\psline{->}(10,4)(10.6,4)
\psline{-}(10.6,4)(11,4)
\psline{->}(11,4)(11, 3.4)
\psline{-}(11, 3.4)(11,3)
\psline{->}(11,3)(12,3)
\psline{-}(12,3)(13,3)
\psline{->}(13,3)(13,2.4)
\psline{-}(13,2.4)(13,2)
\psline{->}(13,2)(12.4,2)
\psline{-}(12.4,2)(12,2)
\psline{->}(12,2)(12,1.4)
\psline{-}(12,1.4)(12,1)
\psline{->}(12, 1)(11.4,1)
\psline{-}(11.4,1)(11,1)
\psline{->}(11, 1)(10.4,1)
\psline{-}(10.4,1)(10,1)

\rput(3.5, 0.8){$e$}
\rput(5.5, 0.8){$e^{-1}$}
\rput(5.5, 1.5){$p$}
\rput(4.5, 1.5){$\ominus$}
\end{pspicture}
\caption{Negative deformation.}
\label{negdefig}
\end{figure}
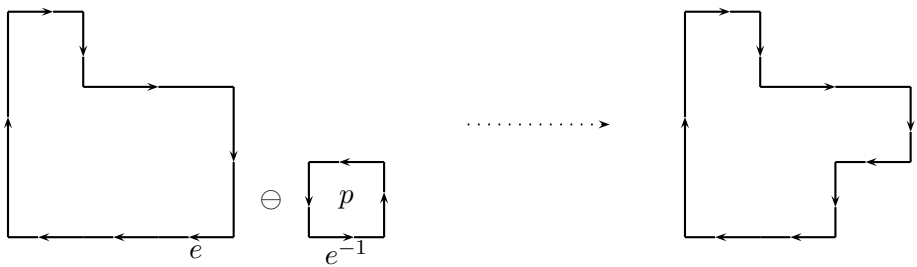

\textbf{Splitting:} Let $l$ be a non-null loop and let $x,y$ be distinct locations in $l$. If $l$ contains the same edge $e$ at $x$ and $y$, write $l = aebec$ and define the positive splitting of $l$ at $x$ and $y$ to be the pair of loops 
\[
\times^1_{x,y} l := [aec]\, , \ \ \times^2_{x,y} l := [be]\, .
\]
If $l$ contains $e$ at location $x$ and $e^{-1}$ at location $y$, write $l = aeb e^{-1} c$ and define the negative splitting of $l$ at $x$ and $y$ to be the pair of loops
\[
\times^1_{x,y} l := [ac]\, , \ \ \times^2_{x,y} l := [b]\, .
\]
Figure \ref{posspfig} illustrates an example of positive splitting, and Figure \ref{negspfig} illustrates negative splitting.

\begin{figure}[t]
\begin{pspicture}(2,1)(12,4.5)
\psset{xunit=1cm,yunit=1cm}
\psline{->}(1,1)(1,2)
\psline{-}(1,2)(1,3)
\psline{->}(1,3)(2,3)
\psline{-}(2,3)(3.05,3)
\psline{->}(3.05,3)(3.05,2.4)
\psline{-}(3.05,2.4)(3.05,2)
\psline{->}(3,2)(2.4,2)
\psline{-}(2.4,2)(2,2)
\psline{->}(2,2)(2,1.4)
\psline{-}(2,1.4)(2,1)
\psline{->}(2,1)(1.4,1)
\psline{-}(1.4,1)(1,1)
\psline{->}(4,2)(4,2.6)
\psline{-}(4,2.6)(4,3)
\psline{->}(4,3)(4.6,3)
\psline{-}(4.6,3)(5,3)
\psline{->}(5,3)(5, 3.6)
\psline{-}(5,3.6)(5,4)
\psline{->}(5,4)(4, 4)
\psline{-}(4,4)(3,4)
\psline{->}(3,4)(3,3.4)
\psline{-}(3,3.4)(3,3)
\psline{->}(3,3)(3,2.4)
\psline{-}(3,2.4)(3,2)
\psline{->}(3.05,2)(3.05,1.4)
\psline{-}(3.05,1.4)(3.05,1)
\psline{->}(3.05,1)(4,1)
\psline{-}(4,1)(5,1)
\psline{->}(5,1)(5,1.6)
\psline{-}(5,1.6)(5,2)
\psline{->}(5,2)(4.4,2)
\psline{-}(4.4,2)(4,2)

\psline[linestyle = dotted]{->}(6, 2.5)(8, 2.5)
\psline{->}(9,1)(9,2)
\psline{-}(9,2)(9,3)
\psline{->}(9,3)(10,3)
\psline{-}(10,3)(11,3)
\psline{->}(11,3)(11,2.4)
\psline{-}(11,2.4)(11,2)
\psline{->}(11,2)(10.4,2)
\psline{-}(10.4,2)(10,2)
\psline{->}(10,2)(10,1.4)
\psline{-}(10,1.4)(10,1)
\psline{->}(10,1)(9.4,1)
\psline{-}(9.4,1)(9,1)
\psline{->}(13,2)(13,2.6)
\psline{-}(13,2.6)(13,3)
\psline{->}(13,3)(13.6,3)
\psline{-}(13.6,3)(14,3)
\psline{->}(14,3)(14,3.6)
\psline{-}(14,3.6)(14,4)
\psline{->}(14,4)(13, 4)
\psline{-}(13,4)(12,4)
\psline{->}(12,4)(12,3.4)
\psline{-}(12,3.4)(12,3)
\psline{->}(12,3)(12,2.4)
\psline{-}(12,2.4)(12,2)
\psline{->}(12,2)(12,1.4)
\psline{-}(12,1.4)(12,1)
\psline{->}(12,1)(13,1)
\psline{-}(13,1)(14,1)
\psline{->}(14,1)(14,1.6)
\psline{-}(14,1.6)(14,2)
\psline{->}(14,2)(13.4,2)
\psline{-}(13.4,2)(13,2)

\rput(2.8,2.5){$e$}
\rput(10.8,2.5){$e$}
\rput(12.2,2.5){$e$}
\end{pspicture}
\caption{Positive splitting.}
\label{posspfig}
\end{figure}
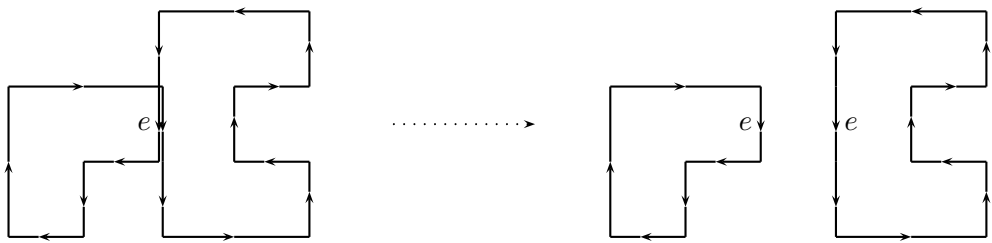

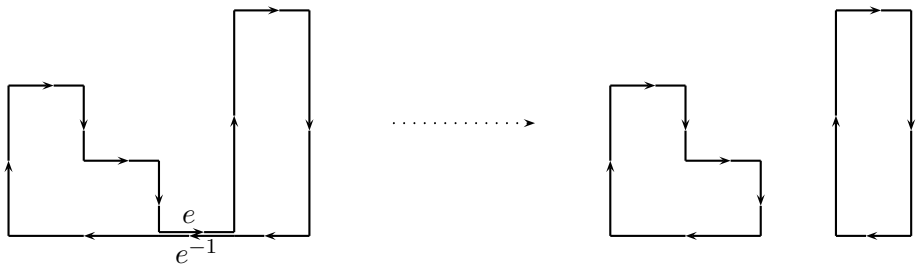
\begin{figure}[t]
\begin{pspicture}(2,1)(12,4.5)
\psset{xunit=1cm,yunit=1cm}
\psline{->}(1,1)(1,2)
\psline{-}(1,2)(1,3)
\psline{->}(1,3)(1.6,3)
\psline{-}(1.6,3)(2,3)
\psline{->}(2,3)(2,2.4)
\psline{-}(2,2.4)(2,2)
\psline{->}(2,2)(2.6,2)
\psline{-}(2.6,2)(3,2)
\psline{->}(3,2)(3,1.4)
\psline{-}(3,1.4)(3,1.05)
\psline{->}(3,1.05)(3.6,1.05)
\psline{-}(3.6,1.05)(4,1.05)
\psline{->}(4,1.05)(4,2.6)
\psline{-}(4,2.6)(4,4)
\psline{->}(4,4)(4.6,4)
\psline{-}(4.6,4)(5,4)
\psline{->}(5,4)(5, 2.4)
\psline{-}(5,2.4)(5,1)
\psline{->}(5,1)(4.4, 1)
\psline{-}(4.4,1)(4,1)
\psline{->}(4,1)(3.4,1)
\psline{-}(3.4,1)(3,1)
\psline{->}(3,1)(2,1)
\psline{-}(2,1)(1,1)
\psline[linestyle = dotted]{->}(6, 2.5)(8, 2.5)
\psline{->}(9,1)(9,2)
\psline{-}(9,2)(9,3)
\psline{->}(9,3)(9.6,3)
\psline{-}(9.6,3)(10,3)
\psline{->}(10,3)(10,2.4)
\psline{-}(10,2.4)(10,2)
\psline{->}(10,2)(10.6,2)
\psline{-}(10.6,2)(11,2)
\psline{->}(11,2)(11,1.4)
\psline{-}(11,1.4)(11,1)
\psline{->}(12,1)(12,2.6)
\psline{-}(12,2.6)(12,4)
\psline{->}(12,4)(12.6,4)
\psline{-}(12.6,4)(13,4)
\psline{->}(13,4)(13, 2.4)
\psline{-}(13,2.4)(13,1)
\psline{->}(13,1)(12.4, 1)
\psline{-}(12.4,1)(12,1)
\psline{->}(11,1)(10,1)
\psline{-}(10,1)(9,1)

\rput(3.4,1.25){$e$}
\rput(3.5,0.8){$e^{-1}$}
\end{pspicture}
\caption{Negative splitting.}
\label{negspfig}
\end{figure}

\textbf{Twisting:} Let $l$ be a non-null loop and $x,y$ be two locations in $l$. If $l$ contains an edge $e$ at both $x$ and $y$, write $l= aebec$ and define the negative twisting of $l$ between $x$ and $y$ as the loop
\[
\propto_{x,y} l := [ab^{-1} c]\, .
\]
If $l$ contains an edge $e$ at location $x$ and $e^{-1}$ at location $y$, write $l = aebe^{-1}c$, and define the positive twisting of $l$ between $x$ and $y$ as  the loop
\[
\propto_{x,y} l := [aeb^{-1}e^{-1} c]\, .
\]
It is easy to verify that these are indeed loops. 
Positive twisting is illustrated in Figure \ref{postwfig} and negative twisting is illustrated in Figure \ref{negtwfig}.
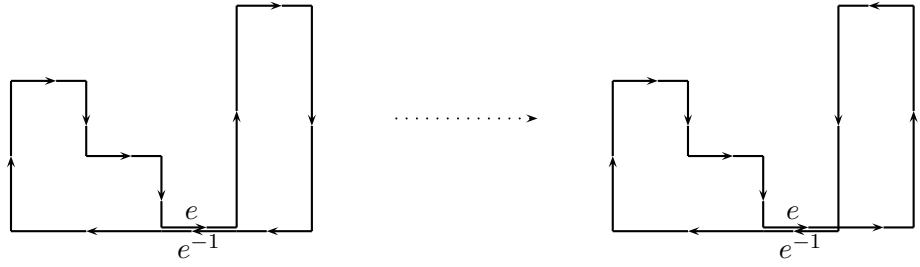
\begin{figure}[t]
\begin{pspicture}(1,1)(11,4.5)
\psset{xunit=1cm,yunit=1cm}
\psline{->}(1,1)(1,2)
\psline{-}(1,2)(1,3)
\psline{->}(1,3)(1.6,3)
\psline{-}(1.6,3)(2,3)
\psline{->}(2,3)(2,2.4)
\psline{-}(2,2.4)(2,2)
\psline{->}(2,2)(2.6,2)
\psline{-}(2.6,2)(3,2)
\psline{->}(3,2)(3,1.4)
\psline{-}(3,1.4)(3,1.05)
\psline{->}(3,1.05)(3.6,1.05)
\psline{-}(3.6,1.05)(4,1.05)
\psline{->}(4,1.05)(4,2.6)
\psline{-}(4,2.6)(4,4)
\psline{->}(4,4)(4.6,4)
\psline{-}(4.6,4)(5,4)
\psline{->}(5,4)(5, 2.4)
\psline{-}(5,2.4)(5,1)
\psline{->}(5,1)(4.4, 1)
\psline{-}(4.4,1)(4,1)
\psline{->}(4,1)(3.4,1)
\psline{-}(3.4,1)(3,1)
\psline{->}(3,1)(2,1)
\psline{-}(2,1)(1,1)
\psline[linestyle = dotted]{->}(6, 2.5)(8, 2.5)
\psline{->}(9,1)(9,2)
\psline{-}(9,2)(9,3)
\psline{->}(9,3)(9.6,3)
\psline{-}(9.6,3)(10,3)
\psline{->}(10,3)(10,2.4)
\psline{-}(10,2.4)(10,2)
\psline{->}(10,2)(10.6,2)
\psline{-}(10.6,2)(11,2)
\psline{->}(11,2)(11,1.4)
\psline{-}(11,1.4)(11,1.05)
\psline{->}(11,1.05)(11.6,1.05)
\psline{-}(11.6,1.05)(12,1.05)
\psline{-}(12,2.4)(12,1)
\psline{->}(12,4)(12,2.4)
\psline{-}(12.4,4) (12,4)
\psline{->}(13,4)(12.4,4)
\psline{-}(13,2.6)(13, 4)
\psline{->}(13,1.05)(13,2.6)
\psline{-}(12.6,1.05)(13,1.05)
\psline{->}(12,1.05)(12.6,1.05)
\psline{->}(12,1)(11.4,1)
\psline{-}(11.4,1)(11,1)
\psline{->}(11,1)(10,1)
\psline{-}(10,1)(9,1)

\rput(3.4,1.25){$e$}
\rput(3.5,0.8){$e^{-1}$}
\rput(11.4,1.25){$e$}
\rput(11.5,0.8){$e^{-1}$}
\end{pspicture}
\caption{Positive Twisting.}
\label{postwfig}
\end{figure}

\begin{figure}[t]
\begin{pspicture}(1,1)(11,4.5)
\psset{xunit=1cm,yunit=1cm}
\psline{->}(1,1)(1,2)
\psline{-}(1,2)(1,3)
\psline{->}(1,3)(2,3)
\psline{-}(2,3)(3.05,3)
\psline{->}(3.05,3)(3.05,2.4)
\psline{-}(3.05,2.4)(3.05,2)
\psline{->}(3,2)(2.4,2)
\psline{-}(2.4,2)(2,2)
\psline{->}(2,2)(2,1.4)
\psline{-}(2,1.4)(2,1)
\psline{->}(2,1)(1.4,1)
\psline{-}(1.4,1)(1,1)
\psline{->}(4,2)(4,2.6)
\psline{-}(4,2.6)(4,3)
\psline{->}(4,3)(4.6,3)
\psline{-}(4.6,3)(5,3)
\psline{->}(5,3)(5, 3.6)
\psline{-}(5,3.6)(5,4)
\psline{->}(5,4)(4, 4)
\psline{-}(4,4)(3,4)
\psline{->}(3,4)(3,3.4)
\psline{-}(3,3.4)(3,3)
\psline{->}(3,3)(3,2.4)
\psline{-}(3,2.4)(3,2)
\psline{->}(3.05,2)(3.05,1.4)
\psline{-}(3.05,1.4)(3.05,1)
\psline{->}(3.05,1)(4,1)
\psline{-}(4,1)(5,1)
\psline{->}(5,1)(5,1.6)
\psline{-}(5,1.6)(5,2)
\psline{->}(5,2)(4.4,2)
\psline{-}(4.4,2)(4,2)

\psline[linestyle = dotted]{->}(6, 2.5)(8, 2.5)

\psline{->}(9,1)(9,2)
\psline{-}(9,2)(9,3)
\psline{->}(9,3)(10,3)
\psline{-}(10,3)(11,3)
\psline{->}(11,2)(10.4,2)
\psline{-}(10.4,2)(10,2)
\psline{->}(10,2)(10,1.4)
\psline{-}(10,1.4)(10,1)
\psline{->}(10,1)(9.4,1)
\psline{-}(9.4,1)(9,1)
\psline{-}(12,2.4)(12,2)
\psline{->}(12,3)(12,2.4)
\psline{-}(12.4,3)(12,3)
\psline{->}(13,3)(12.4,3)
\psline{-}(13,3.4)(13, 3)
\psline{->}(13,4)(13,3.4)
\psline{-}(12,4)(13, 4)
\psline{->}(11,4)(12,4)
\psline{-}(11,3.6)(11,4)
\psline{->}(11,3)(11,3.6)
\psline{-}(11,1.6)(11,2)
\psline{->}(11,1)(11,1.6)
\psline{-}(12,1)(11,1)
\psline{->}(13,1)(12,1)
\psline{-}(13,1.4)(13,1)
\psline{->}(13,2)(13,1.4)
\psline{-}(12.6,2)(13,2)
\psline{->}(12,2)(12.6,2)
\rput(2.8,2.5){$e$}
\end{pspicture}
\caption{Negative twisting.}
\label{negtwfig}
\end{figure}
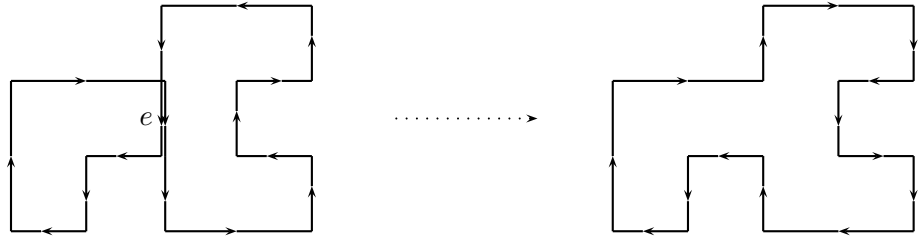

\textbf{Inaction:} As the name suggests, inaction means nothing changes: the loop remains the same after the operation as it was before.

If $s$ and $s'$ are loop sequences, say that $s'$ is a splitting of $s$ if $s'$ is obtained from $s$ by splitting one of the component loops in the minimal representation of $s$. Similarly, say that $s'$ is deformation or twisting of $s$ if $s'$ is obtained by deforming or twisting one of the components of $s$, $s'$ is inaction of $s$ if $s'=s$, and say that $s'$ is a merger of $s$ if $s'$ is obtained by merging two of the component loops of $s$. Let
\begin{align*}
\fd^+(s) &:= \{s': \text{ $s'$ is a positive deformation of $s$}\}\, ,\\
\fd^-(s) &:= \{s': \text{ $s'$ is a negative deformation of $s$}\}\, ,\\
\fs^+(s) &:= \{s': \text{ $s'$ is a positive splitting of $s$}\}\, ,\\
\fs^-(s) &:= \{s': \text{ $s'$ is a negative splitting of $s$}\}\, ,\\
\fst^+(s) &:= \{s': \text{ $s'$ is a positive merger of $s$}\}\, ,\\
\fst^-(s) &:= \{s': \text{ $s'$ is a negative merger of $s$}\}\, ,\\
\ftw^+(s) &:= \{s': \text{ $s'$ is a positive twisting of $s$}\}\, ,\\
\ftw^-(s) &:= \{s': \text{ $s'$ is a negative twisting of $s$}\}\, .
\end{align*}
Let $\fd(s)=\fd^+(s)\cup \fd^-(s)$, and define $\fs(s)$, $\fst(s)$ and $\ftw(s)$ similarly.

If a loop can be split positively at locations $x$ and $y$, then it can also be split positively at $y$ and $x$, producing the same pair of loops but in reverse order. Since the order of loops in a loop sequence is important, these two splittings are identified as distinct elements of $\fs^+(s)$. Similar remarks apply for negative splittings, twistings and mergers. For example, while counting mergers, one should be careful about the following. Let $s = (l_1,\ldots,l_n)$ be a loop sequence. Suppose that $l_1$ and $l_r$ can be positively merged at locations $x$ and $y$. Then $(l_1\oplus_{x,y} l_r,l_2,\ldots, l_{r-1}, l_{r+1},\ldots, l_n)$ is the sequence obtained by performing this merging operation. However, in this situation, $l_r$ and $l_1$ can be positively merged at locations $y$ and $x$, producing the loop sequence $(l_2,\ldots, l_{r-1}, l_r\oplus_{y,x} l_1,l_{r+1},\ldots, l_n)$. Although the loops $l_1\oplus_{x,y} l_r$ and $l_r\oplus_{y,x} l_1$ are the same, the two operations mentioned above are counted as distinct elements of $\fst^+(s)$, since the order of loops in a loop sequence is important. A similar remark applies for negative mergers. 

A trajectory is a sequence of loop sequences $(s_0, s_1, \ldots)$ where each $s_{i+1}$ is obtained from $s_{i}$ by applying one of the five kinds of operations defined above to loops in the minimal representation of $s_i$.  A trajectory can be finite or infinite. A finite trajectory whose terminal loop sequence is null and all non-terminal loop sequences are non-null  will be called a vanishing trajectory. Let $\mx(s)$ be the set of all vanishing trajectories that start at a loop sequence $s$. Let $\mx_{i,a, b, c}(s)$ be the set of all vanishing trajectories that start at a loop sequence $s$ and has $i$ deformations, $a$ twistings, $b$ mergers and $c$ inactions.  We will later prove that for any element of $\mx_{i, a, b, c}(s)$, the number of splitting operations is bounded by a number that depends on $s$, $i$ and $b$.

Define the genus of a trajectory as the sum of the number of mergers and half the number of twistings and inactions. The term ``genus'' is inspired by the topological definition of the genus of a surface. A trajectory traces out a surface over time, and mergers give rise to handles in this surface. However, the genus that we define here may have a somewhat different value than the topological genus, since twistings and inactions are not considered in the topological definition. If a trajectory has no twistings and inactions, then its genus is exactly the same as the topological genus. 

For $i, k\geq 0$ let 
$$\mx_{i, k}(s):=\bigcup\limits_{a+2b+c=k}\mx_{i, a, b, c}(s)\, ,$$
and let
$$\mx_{k}(s):=\bigcup\limits_{i=0}^{\infty}\mx_{i, k}(s)\,.$$ 
In words, $\mx_k(s)$ is the set of vanishing trajectories of genus $k/2$ that start at $s$, and $\mx_{i,k}(s)$ is the subset of $\mx_k(s)$ consisting of all trajectories that have exactly $i$ deformations. 

Figure \ref{trajfig} illustrates a vanishing trajectory with genus $2$. The surface traced out by this trajectory is depicted in Figure \ref{surffig}. Performed operations are negative deformation, positive twisting, negative splitting, negative deformation,  negative deformation, negative merger, negative deformation, inaction, negative deformation and  negative deformation. If $s$ is the loop on the top left corner of Figure \ref{trajfig}, then this trajectory is an element of $\mx_{6,1,1,1}(s)$, $\mx_{6, 4}(s)$ and $\mx_{4}(s)$.

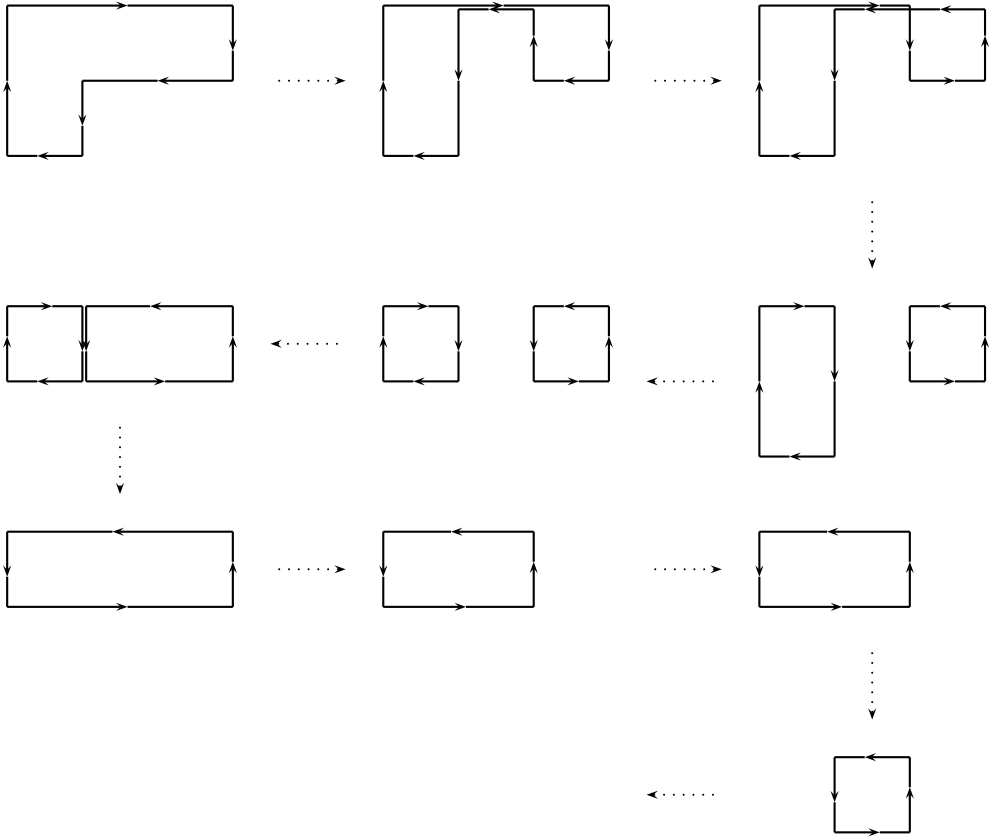
\begin{figure}[t]
\begin{pspicture}(1,-5.5)(14.5,6.5)
\psset{xunit=1cm,yunit=1cm}
\psline{->}(1,6)(2.6,6)
\psline{-}(2.6,6)(4,6)
\psline{->}(4,6)(4,5.4)
\psline{-}(4,5.4)(4,5)
\psline{->}(4,5)(3,5)
\psline{-}(3,5)(2,5)
\psline{->}(2,5)(2,4.4)
\psline{-}(2,4.4)(2,4)
\psline{->}(2,4)(1.4,4)
\psline{-}(1.4,4)(1,4)
\psline{->}(1,4)(1,5)
\psline{-}(1,5)(1,6)
\psline[linestyle = dotted]{->}(4.5,5)(5.5,5)
\psline{->}(6,6)(7.6,6)
\psline{-}(7.6,6)(9,6)
\psline{->}(9,6)(9,5.4)
\psline{-}(9,5.4)(9,5)
\psline{->}(9,5)(8.4,5)
\psline{-}(8.4,5)(8,5)
\psline{->}(8,5)(8,5.6)
\psline{-}(8,5.6)(8,5.95)
\psline{->}(8,5.95)(7.4,5.95)
\psline{-}(7.4, 5.95)(7,5.95)
\psline{->}(7, 5.95)(7,5)
\psline{-}(7,5)(7,4)
\psline{->}(7,4)(6.4,4)
\psline{-}(6.4,4)(6,4)
\psline{->}(6,4)(6,5)
\psline{-}(6,5)(6,6)
\psline[linestyle = dotted]{->}(9.5,5)(10.5,5)
\psline{->}(11,6)(12.6,6)
\psline{-}(12.6,6)(13,6)
\psline{->}(13,6)(13,5.4)
\psline{-}(13,5.4)(13,5)
\psline{->}(13,5)(13.6,5)
\psline{-}(13.6,5)(14,5)
\psline{->}(14,5)(14,5.6)
\psline{-}(14,5.6)(14,5.95)
\psline{->}(14,5.95)(13.4,5.95)
\psline{-}(13.4,5.95)(13,5.95)
\psline{->}(13,5.95)(12.4,5.95)
\psline{-}(12.4, 5.95)(12,5.95)
\psline{->}(12, 5.95)(12,5)
\psline{-}(12,5)(12,4)
\psline{->}(12,4)(11.4,4)
\psline{-}(11.4,4)(11,4)
\psline{->}(11,4)(11,5)
\psline{-}(11,5)(11,6)
\psline[linestyle = dotted]{->}(12.5,3.5)(12.5,2.5)
\psline{->}(11,2)(11.6,2)
\psline{-}(11.6,2)(12,2)
\psline{->}(12,2)(12,1)
\psline{-}(12,1)(12,0)
\psline{->}(12,0)(11.4,0)
\psline{-}(11.4,0)(11,0)
\psline{->}(11,0)(11,1)
\psline{-}(11,1)(11,2)

\psline{->}(14,2)(13.4,2)
\psline{-}(13.4,2)(13,2)
\psline{->}(13,2)(13,1.4)
\psline{-}(13,1.4)(13,1)
\psline{->}(13,1)(13.6,1)
\psline{-}(13.6,1)(14,1)
\psline{->}(14,1)(14,1.6)
\psline{-}(14,1.6)(14,2)
\psline[linestyle = dotted]{->}(10.5, 1)(9.5, 1)
\psline{->}(7,2)(7,1.4)
\psline{-}(7,1.4)(7,1)
\psline{->}(7,1)(6.4,1)
\psline{-}(6.4,1)(6,1)
\psline{->}(6,1)(6,1.6)
\psline{-}(6,1.6)(6,2)
\psline{->}(6,2)(6.6,2)
\psline{-}(6.6,2)(7,2)

\psline{->}(9,2)(8.4,2)
\psline{-}(8.4,2)(8,2)
\psline{->}(8,2)(8,1.4)
\psline{-}(8,1.4)(8,1)
\psline{->}(8,1)(8.6,1)
\psline{-}(8.6,1)(9,1)
\psline{->}(9,1)(9,1.6)
\psline{-}(9,1.6)(9,2)
\psline[linestyle = dotted]{->}(5.5, 1.5)(4.5, 1.5)
\psline{->}(2,2)(2,1.4)
\psline{-}(2,1.4)(2,1)
\psline{->}(2,1)(1.4,1)
\psline{-}(1.4,1)(1,1)
\psline{->}(1,1)(1,1.6)
\psline{-}(1,1.6)(1,2)
\psline{->}(1,2)(1.6,2)
\psline{-}(1.6,2)(2,2)

\psline{->}(4,2)(2.9,2)
\psline{-}(2.9,2)(2.05,2)
\psline{->}(2.05,2)(2.05,1.4)
\psline{-}(2.05,1.4)(2.05,1)
\psline{->}(2.05,1)(3.1,1)
\psline{-}(3.1,1)(4,1)
\psline{->}(4,1)(4,1.6)
\psline{-}(4,1.6)(4,2)
\psline[linestyle = dotted]{->}(2.5, .5)(2.5, -.5)
\psline{->}(4,-1)(2.4,-1)
\psline{-}(2.4,-1)(1,-1)
\psline{->}(1,-1)(1,-1.6)
\psline{-}(1,-1.6)(1,-2)
\psline{->}(1,-2)(2.6,-2)
\psline{-}(2.6,-2)(4,-2)
\psline{->}(4,-2)(4,-1.4)
\psline{-}(4,-1.4)(4,-1)
\psline[linestyle = dotted]{->}(4.5, -1.5)(5.5, -1.5)
\psline{->}(8,-1)(6.9,-1)
\psline{-}(6.9,-1)(6,-1)
\psline{->}(6,-1)(6,-1.6)
\psline{-}(6,-1.6)(6,-2)
\psline{->}(6,-2)(7.1,-2)
\psline{-}(7.1,-2)(8,-2)
\psline{->}(8,-2)(8,-1.4)
\psline{-}(8,-1.4)(8,-1)
\psline[linestyle = dotted]{->}(9.5, -1.5)(10.5, -1.5)
\psline{->}(13,-1)(11.9,-1)
\psline{-}(11.9,-1)(11,-1)
\psline{->}(11,-1)(11,-1.6)
\psline{-}(11,-1.6)(11,-2)
\psline{->}(11,-2)(12.1,-2)
\psline{-}(12.1,-2)(13,-2)
\psline{->}(13,-2)(13,-1.4)
\psline{-}(13,-1.4)(13,-1)
\psline[linestyle = dotted]{->}(12.5, -2.5)(12.5, -3.5)
\psline{->}(13,-4)(12.4,-4)
\psline{-}(12.4,-4)(12,-4)
\psline{->}(12,-4)(12,-4.6)
\psline{-}(12,-4.6)(12,-5)
\psline{->}(12,-5)(12.6,-5)
\psline{-}(12.6,-5)(13,-5)
\psline{->}(13,-5)(13,-4.4)
\psline{-}(13,-4.4)(13,-4)
\psline[linestyle = dotted]{->}(10.5, -4.5)(9.5, -4.5)
\end{pspicture}
\caption{A vanishing trajectory of a loop. Performed operations are deformation, twisting, splitting, deformation, deformation, merger, deformation, inaction, deformation and deformation.}
\label{trajfig}
\end{figure}
Define the weight of the transition from $s$ to $s'$ at inverse coupling strength $\beta$ as
\begin{align}
w_\beta(s,s') &:= 
\begin{cases}
1 &\text{ if $s'=s$,}\\
-1/|s| &\text{ if $s'\in \ftw^+(s) \cup \fs^+(s)\cup \fst^+(s)$}\\
1/|s| &\text{ if $s'\in \ftw^-(s) \cup \fs^-(s)\cup \fst^-(s)$}\\
-\beta/|s| &\text{ if $s' \in \fd^+(s)$,}\\
\beta/|s| &\text{ if $s' \in \fd^-(s)$.}
\end{cases}\label{weight}
\end{align}
If $X = (s_0,s_1,\ldots, s_n)$ is a vanishing   trajectory, define the weight of $X$ at inverse coupling strength $\beta$ as the product
\[
w_\beta(X) := w_\beta(s_0, s_1)w_\beta(s_1, s_2)\cdots w_\beta(s_{n-1}, s_n) \, .
\]
Note that the weight of a trajectory may be positive or negative.  For example, the vanishing trajectory in Figure \ref{trajfig} has weight $-\beta^6/221184000$. This distinguishes these trajectory weights from the weights attached to trajectories of Markov processes in probability theory.

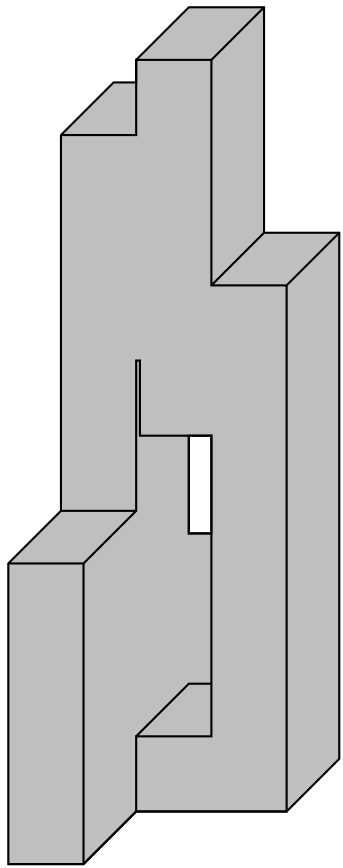
\begin{figure}[h]
\begin{pspicture}(-1,1)(9,13)
\psset{xunit=1cm,yunit=1cm}
\pspolygon[fillstyle=solid, fillcolor=lightgray](1,1)(1,5)(1.7,5.7)(1.7,10.7)(2.4,11.4)(2.7,11.4)(2.7, 11.7)(3.4,12.4)(4.4,12.4)(4.4, 9.4)(5.4,9.4)(5.4,2.4)(4.7,1.7)(2.7,1.7)(2,1)(1,1)
\pspolygon[fillstyle=solid, fillcolor=white](3.4,5.4)(3.4,6.7)(3.7,6.7)(3.7,5.4)(3.4,5.4)
\psline{-}(5.4,9.4)(4.7,8.7)
\psline{-}(4.4,9.4)(3.7, 8.7)
\psline{-}(4.4,12.4)(3.7, 11.7)
\psline{-}(2,1)(2,5)(2.7,5.7)(2.7, 7.7)(2.75, 7.7)(2.75, 6.7)(3.7, 6.7)(3.7, 2.7)(2.7, 2.7)(2.7, 1.7)
\psline{-}(1.7, 10.7)(2.7, 10.7)(2.7, 11.7)(3.7, 11.7)(3.7, 8.7)(4.7, 8.7)(4.7, 1.7)(2.7, 1.7)(2,1)(1,1)
\psline{-}(3.7,5.4)(3.4,5.4)(3.4,6.7)
\psline{-}(2.7, 2.7)(3.4,3.4)(3.7,3.4)
\psline{-}(2,5)(1,5)
\psline{-}(1.7,5.7)(2.7,5.7)
\end{pspicture}
\caption{The surface traced out by the trajectory from Figure \ref{trajfig}.}
\label{surffig}
\end{figure}


\section{Main result: The $1/N$ expansion}\label{result}
We will assume that the reader is familiar with the notations and terminologies defined in Sections~\ref{intro} and~\ref{stringsec}. Consider $SO(N)$ lattice gauge theory on a subset $\Lambda$ of $\zz^d$. The main objective of this paper is to present asymptotic series expansion for the loop function
\[
\phi_{\Lambda, N, \beta}(s) := \frac{\smallavg{W_{l_1}W_{l_2}\cdots W_{l_n}}}{N^n}\,,
\]
where $s$ is a loop sequence with minimal representation $(l_1, \ldots, l_n)$. We will omit subscripts $\Lambda$ and $\beta$ and simply write $\phi_N(s)$ whenever $\Lambda$ and $\beta$ are clear from the context.  The following theorem establishes the $1/N$ expansion~\eqref{1nexp} and gives string-theoretic formulas for the terms in the expansion. This is the main result of this paper. 
\begin{thm}\label{mainthmofpaper}
There exists a sequence of positive real numbers $\{\beta_0(d,k)\}_{k\ge 0}$ depending only on $d$ such that for any $k\ge 0$, the following hold when $|\beta|\le \beta_0(d,k)$: 
\begin{enumerate}
\item[\textup{(i)}] For each $s\in \cs$, the sum
\begin{align*}
f_k(s)&:=\sum_{X\in \mx_k(s)}w_{\beta}(X) 
\end{align*}
is absolutely convergent. 
\item[\textup{(ii)}] Let $\Lambda_1,\Lambda_2,\ldots$ be any sequence of finite subsets of $\zz^d$ that are monotonically increasing to $\zz^d$.  Then for each $s\in \cs$,
\[
\lim\limits_{N\to \infty}N^k\biggl(\phi_{\Lambda_N,N,\beta}(s)-f_0(s)-\frac{1}{N}f_1(s)- \frac{1}{N^2}f_2(s)-\cdots-\frac{1}{N^{k}}f_{k}(s)\biggr)=0\,,
\]
where $f_0,\ldots, f_k$ are the functions defined in part \textup{(i)}.
\item[\textup{(iii)}] Lastly, for each $s\in \cs$, $|f_k(s)|\leq (2^{2k+13}d)^{|s|}$. 
\end{enumerate}
\end{thm}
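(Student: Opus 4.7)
My plan is to combine a Makeenko--Migdal-type master equation for the loop function $\phi_N(s)$ with a nested induction that simultaneously establishes parts (i) and (iii), and then to deduce (ii) as a consequence. The master equation, derived by integration by parts against the Haar measure on $SO(N)$ as in \cite{chatterjee15}, should take the schematic form
\[
|s|\,\phi_N(s) = \sum_{s'\in \fd(s)} \pm \beta\,\phi_N(s') + \sum_{s'\in \fs(s)} \pm \phi_N(s') + \frac{1}{N}\sum_{s'\in \ftw(s)\cup\{s\}} \pm \phi_N(s') + \frac{1}{N^2}\sum_{s'\in \fst(s)} \pm \phi_N(s'),
\]
where each $\pm$ is dictated by the $+/-$ variant of the operation defined in Section \ref{stringsec}. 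The $1/N$ in front of the twisting/inaction block and the $1/N^2$ in front of the merger block match exactly the genus assignment $k = a + 2b + c$. What distinguishes $SO(N)$ from $U(N)$ here is the appearance of the twistings together with the self-term, both of which arise from the relation $Q_e^T = Q_e^{-1}$ when the Haar derivative lands on an edge that already carries its inverse elsewhere along the loop.

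Writing $\phi_N(s) = \sum_{j=0}^{k} N^{-j} f_j(s) + R_{N,k}(s)$ and equating coefficients of $N^{-k}$ in the master equation yields, with the conventions $f_{-1}=f_{-2}\equiv 0$ and $f_0(\emptyset)=1$, a linear recursion of the form
\[
f_k(s) = \sum_{s' \in \fd(s)\cup\fs(s)} w_\beta(s,s')\,f_k(s') + \sum_{s' \in \ftw(s)\cup\{s\}} w_\beta(s,s')\,f_{k-1}(s') + \sum_{s' \in \fst(s)} w_\beta(s,s')\,f_{k-2}(s').
\]
Unfolding this recursion peels off one transition at a time and identifies formally $f_k(s) = \sum_{X \in \mx_k(s)} w_\beta(X)$: since twistings, mergers, and inactions each strictly lower the genus index, they can occur at most $k$ times along any branch contributing to $f_k$, so the unfolding reaches the null loop sequence after finitely many steps along each branch and picks up exactly the vanishing trajectories of genus $k/2$ starting at $s$.

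The crux is a three-fold induction that simultaneously establishes absolute convergence of this sum (part (i)) and the bound in (iii). The outer loop is on $k$; the middle on the number of deformations $i$ indexing the stratification $\mx_k(s) = \bigcup_i \mx_{i,k}(s)$; and the inner on $|s|$. At the outer level, the twisting and merger contributions are controlled by the already-proved bounds for $f_{k-1}$ and $f_{k-2}$: a merger offers at most $O(|s|^2)$ choices of location pair against a single factor $1/|s|$ from $w_\beta$, giving net cost $O(|s|)$ per merger, which must be absorbed by the gap between $(2^{2(k-2)+13}d)^{|s|}$ and the target $(2^{2k+13}d)^{|s|}$; this is exactly what the constant $2^{2k+13}$ is engineered for. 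At the middle level, each deformation contributes a factor $\le 2\cdot 2(d-1)|s|\cdot |\beta|/|s| = O(d|\beta|)$ multiplicatively, so choosing $|\beta|\le \beta_0(d,k)$ small enough turns the sum over $i$ into a convergent geometric series. At the inner level, negative splittings together with backtrack erasures strictly reduce $|s|$, while positive splittings break a single loop into two strictly shorter loops, closing the induction on length.

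Claim (ii) then follows by substituting the partial sum $\sum_{j\le k} N^{-j} f_j(s)$ into the master equation: orders $N^{-j}$ for $j\le k$ cancel by construction, and the remainder satisfies an inhomogeneous version of the same recursion whose driving term is $O(N^{-k-1})$, so induction on $k$ using (iii) yields $N^k R_{N,k}(s) \to 0$. Passing to $\Lambda_N \uparrow \zz^d$ is routine because both the master equation and the trajectory sums defining $f_k$ stabilize as soon as $\Lambda_N$ contains all plaquettes reachable from $s$ within the relevant trajectory length. The main obstacle beyond \cite{chatterjee15} is precisely the merger term: in the genus-zero case only splittings, deformations, and the boundary condition at $\emptyset$ appeared, and length served as a clean monovariant, whereas mergers re-glue two loops into one and can trigger cascades of new operations whose combinatorics must be balanced against a fixed genus budget --- this is why a two-fold induction was enough in \cite{chatterjee15} but a genuinely three-fold one is needed here.
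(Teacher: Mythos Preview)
Your overall architecture is right --- a master equation whose terms are graded by genus, unfolded via a three-fold induction --- but the inner induction variable is wrong, and this is a genuine gap rather than a detail. Positive splitting of a loop $l$ at locations $x,y$ produces $l_1,l_2$ with $|l_1|+|l_2|\le |l|$, not $<|l|$ (Lemma~\ref{split1}); so $|s'|$ need not be strictly less than $|s|$, and an induction on $|s|$ does not close. Your sentence ``positive splittings break a single loop into two strictly shorter loops, closing the induction on length'' conflates individual loop lengths with the total length $|s|$. The quantity that does strictly decrease under every splitting is the index $\iota(s)=|s|-\#s$ (Lemma~\ref{iota2}), and the paper runs the inner induction on $\iota(s)$, not $|s|$. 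This is not cosmetic: the contraction factor that makes the splitting sum summable in Lemma~\ref{coeff} and Lemma~\ref{coeffb} is precisely $K^{\iota(s')-\iota(s)}\le K^{-1}$, without which the $O(|s|)$ splittings against the $1/|s|$ weight give an $O(1)$ coefficient and the recursion fails to contract.

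There is also a difference in logical order. You propose to establish (i) and (iii) directly from the trajectory sums and then deduce (ii). The paper goes the other way: it first proves that the limits $f_k(s)=\lim_N H_{k,N}(s)$ exist and satisfy the recursion \eqref{limequns} (Theorem~\ref{eqfuns}), using a uniform-in-$N$ bound (Lemma~\ref{boundHq}) obtained via a generating function $F_N(\lambda)=\sum_{\delta}\lambda^{\iota(\delta)}D_N(\delta)$ over degree sequences, together with a uniqueness theorem for solutions of the recursion. Only after $f_k$ is known to exist does the paper expand it as $\sum_i a_{i,k}(s)\beta^i$ (Theorem~\ref{series}), bound the coefficients via the Catalan product $K^{(5+k)i+\iota(\delta)}|\delta|^{2k}\prod_j C_{\delta_j-1}$ (Lemma~\ref{coeff}), and finally identify $a_{i,k}(s)\beta^i$ with the trajectory sum $\sum_{X\in\mx_{i,k}(s)}w_\beta(X)$ (Section~9). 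Your route is not unreasonable, but the generating-function argument and the Catalan-product ansatz are doing essential work in absorbing the splitting combinatorics that a bare induction on $|s|$ cannot.
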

As mentioned before, the case of $k=0$ was proved in \cite{chatterjee15}. 
Theorem~\ref{mainthmofpaper} tells that the series expansion for $f_0$ involves only trajectories of genus zero, whereas any other $f_k$ involves trajectories of  positive genus $k/2$.

Notice that the above result is valid only for small enough $\beta$, that is, at strong coupling. It is a non-perturbative result. The original $1/N$ expansion of 't Hooft~\cite{thooft74}, on the other hand, is a perturbative weak coupling expansion. When $\beta$ is large, a lattice gauge theory can be heuristically treated as a perturbation of a Gaussian theory, which naturally leads to Feynman diagram calculations. However, this argument has not yet been made rigorous, as far as we know.

The rest of the paper is devoted to proving Theorem \ref{mainthmofpaper}. 

\section{Preliminary lemmas}\label{prelim}
Define the length of a loop sequence $s$ with minimal representation $(l_1, \ldots, l_n)$ as
\[
|s|:=|l_1|+\cdots+|l_n|\, ,
\]
the size of $s$ as 
\[
\#s:=n\, ,
\]
and the index of $s$ as
\[
\iota(s):=|s|-\#s\, .
\]
In this section we will prove some lemmas about the behavior of the length $|s|$ and the index $\iota(s)$ of a loop sequence $s$ under the string operations defined in Section \ref{stringsec}.
\begin{lmm}\label{twist}
Let $s$ be a non-null loop sequence. If $s'$ is a twisting of $s$, then $|s'|\leq |s|$ and $\iota(s')\le\iota(s)$. 
\end{lmm}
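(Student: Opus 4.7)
The plan is to reduce everything to a local calculation on the single component loop that is being twisted, since twisting does not touch any other components of the minimal representation of $s$. Let $s = (l_1,\ldots,l_n)$ be the minimal representation of $s$, and suppose without loss of generality that the twisting acts on $l_1$ to produce a loop $l_1'$. Then $s'$ has the same remaining components $l_2,\ldots,l_n$, with $l_1$ replaced by $l_1'$ (and $l_1'$ deleted from the sequence if it happens to be the null loop).

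First I would bound the length of $l_1'$ against the length of $l_1$ by inspecting the two cases in the definition. For negative twisting, write $l_1 = aebec$, so $|l_1| = |a|+|b|+|c|+2$, and $l_1' = [ab^{-1}c]$, whose pre-core representative $ab^{-1}c$ has length $|a|+|b|+|c| = |l_1|-2$. For positive twisting, write $l_1 = aebe^{-1}c$, so $|l_1| = |a|+|b|+|c|+2$, and $l_1' = [aeb^{-1}e^{-1}c]$, whose pre-core representative has length $|a|+|b|+|c|+2 = |l_1|$. In both cases, backtrack erasure (used to form the nonbacktracking core) can only shorten a closed path, so $|l_1'| \le |l_1|$. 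Summing over components gives $|s'| \le |s|$.

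For the index, I would split on whether $l_1'$ is null. If $l_1' \ne \emptyset$, then the minimal representation of $s'$ has the same number of components as that of $s$, so $\#s' = \#s$ and $\iota(s') = |s'|-\#s' \le |s|-\#s = \iota(s)$. If $l_1' = \emptyset$, then $l_1'$ is dropped from the minimal representation, so $\#s' = \#s - 1$ and $|s'| = |s| - |l_1|$, giving
\[
\iota(s') = |s| - |l_1| - (\#s - 1) = \iota(s) - (|l_1|-1) \le \iota(s),
\]
since $l_1$ is a non-null loop and hence $|l_1|\ge 1$ (in fact $|l_1|\ge 4$ in $\zz^d$).

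There is no real obstacle here — the statement is essentially bookkeeping. The only subtle point, and the one I would be careful to flag, is the null-loop case: twisting is defined to produce a single loop, but that loop can collapse to the null loop after backtrack erasure, and this decreases $\#s$ by $1$. This is precisely why the bound $|l_1|\ge 1$ is needed to prevent $\iota$ from going up.
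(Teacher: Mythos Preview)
Your proof is correct and follows essentially the same approach as the paper's: both observe that twisting adds no edges (you verify this explicitly for the positive and negative subtypes, the paper just asserts it), then split on whether the twisted component loop becomes null to control $\#s'$. The only cosmetic difference is that the paper invokes the sharper $|l_1|\ge 4$ in the null case to get $\iota(s')\le\iota(s)-3$, whereas you use $|l_1|\ge 1$; either suffices for the stated conclusion.
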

\begin{proof}
It is easy to see that a twisting operation does not add any extra edges to loops in $s$. Hence $|s'|\leq |s|$. On the other hand either $\#s'=\#s$ or $\#s'=\#s-1$.
In the first case
$$\iota(s')=|s'|-\#s'\leq |s|-\#s=\iota(s)\, .$$
The second case can occur only if one of the loops in $s$ vanishes after the twisting operation. Since any non-null loop has length at least $4$, this implies that  $|s'|\leq |s|-4$ and so
$$\iota(s')=|s'|-\#s'\leq (|s|-4)-(\#s-1)=\iota(s)-3\, ,$$
which completes the proof.
\end{proof}
\begin{lmm}\label{merger}
If  $s$ is a non-null loop sequence and $s'$ is a merger of $s$, then $|s'|\leq |s|$ and $\iota(s')\leq \iota(s)+1$.
\end{lmm}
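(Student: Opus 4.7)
The plan is to mimic the argument of Lemma \ref{twist}: bound $|s'|$ from the merger formulas, then exploit the fact that a merger changes $\#s$ by $-1$ (or $-2$ in a degenerate case) to read off the bound on the index.

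First I would verify the length bound $|s'|\le|s|$ by going through the four merger definitions. Writing the minimal representation of $s$ with the two participating loops as $l$ and $l'$, the merged loop in each case is the nonbacktracking core of some cycle whose length before erasure is either $|l|+|l'|$ (for both positive mergers, since the shared edge is inserted twice) or $|l|+|l'|-2$ (for both negative mergers, since the shared edge disappears). Since backtrack erasure only shortens a cycle, the resulting loop has length at most $|l|+|l'|$. Because $s'$ is obtained from $s$ by removing $l,l'$ and inserting this merged loop, $|s'|\le|s|-|l|-|l'|+(|l|+|l'|)=|s|$.

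For the index bound I would case-split on whether the merged loop is null. If the merged loop is non-null, then passing to the minimal representation gives $\#s'=\#s-1$, hence
\[
\iota(s')=|s'|-\#s'\le |s|-(\#s-1)=\iota(s)+1.
\]
If instead the merged loop reduces to the null loop after backtrack erasures, it disappears in the minimal representation, so $\#s'=\#s-2$ and $|s'|=|s|-|l|-|l'|$. Since every non-null loop is the nonbacktracking core of a closed path and therefore has length at least $4$ (the shortest such loop goes around a plaquette), we get $|l|,|l'|\ge 4$, so $|s'|\le|s|-8$, and
\[
\iota(s')=|s'|-\#s'\le|s|-8-(\#s-2)=\iota(s)-6\le\iota(s)+1,
\]
which is even stronger than required.

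I expect no real obstacle here; the only point to be careful about is the accounting in the degenerate case where the merged loop collapses entirely, so that the drop $\#s-\#s'=2$ does not lead to a spurious $\iota(s')=\iota(s)+2$. The length gain of at least $8$ edges from the two vanishing loops more than compensates, so the bound $\iota(s')\le\iota(s)+1$ holds comfortably in every case.
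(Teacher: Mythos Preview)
Your proof is correct and follows essentially the same argument as the paper: bound $|s'|\le|s|$ by noting that mergers do not add edges, then split into the two cases $\#s'=\#s-1$ and $\#s'=\#s-2$, using in the latter that each of the two vanishing loops has length at least $4$. The only cosmetic difference is that you spell out the merger formulas explicitly in the length bound, while the paper just remarks that mergers add no edges.
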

\begin{proof}
Mergers do not add extra edges. Therefore $|s'|\leq |s|$. On the other hand either $\#s'=\#s-1$ or $\#s'=\#s-2$. The first case happens if two loops in $s$ merge to form one, which gives  
$$\iota(s')=|s'|-\#s'\leq |s|-(\#s-1)=\iota(s)+1\, .$$
The second case happens if two loops merge to form the null loop. Since any non-null loop has length at least $4$, this gives $|s'|\leq |s|-8$ and 
$$\iota(s')=|s'|-\#s'=(|s|-8)-(\#s-2)=\iota(s)-6\,.$$
This completes the proof of the lemma.
\end{proof}
\begin{lmm}\label{deform}
If $s$ is a non-null loop sequence and $s'$ is a deformation of $s$, then $|s'|\leq |s|+4$ and $\iota(s')\leq \iota(s)+4$.
\end{lmm}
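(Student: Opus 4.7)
The plan is to treat a deformation as the special case of a merger in which one of the ``loops'' being combined is a plaquette, and then to mimic the strategy already used for Lemmas \ref{twist} and \ref{merger}. A plaquette $p$ has length exactly $4$, so one should expect the bound $|s'|\le |s|+4$ to be sharp and to come directly from bounding the length of $l\oplus_x p$ or $l\ominus_x p$.

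First I would establish the length bound. A deformation replaces some loop $l$ in the minimal representation of $s$ by either $l\oplus_x p$ or $l\ominus_x p$, where $p\in\cp^+(e)$ and $e$ is the edge of $l$ at location $x$. Inspecting the four explicit merger formulas from Section \ref{stringsec}, the raw (pre-backtrack-erasure) merged path has length at most $|l|+|p|=|l|+4$ in the positive case, and length at most $|l|+|p|-2=|l|+2$ in the negative case. Since backtrack erasure only shortens a cycle, the nonbacktracking core inherits the same bound. Since the plaquette $p$ is \emph{not} a component of $s$ (only the single component $l$ is altered), this yields
\[
|s'|\le |s|-|l|+(|l|+4)=|s|+4.
\]

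For the index bound, I would split on whether the deformed loop survives backtrack erasure, exactly as in Lemma \ref{merger}. If it does not vanish, then $\#s'=\#s$, and so
\[
\iota(s')=|s'|-\#s'\le (|s|+4)-\#s=\iota(s)+4.
\]
If instead it collapses to the null loop, then $\#s'=\#s-1$ while the length of $s'$ drops by $|l|$, giving
\[
\iota(s')=(|s|-|l|)-(\#s-1)=\iota(s)-|l|+1\le \iota(s)-3
\]
since every non-null loop has length at least $4$; this is strictly better than the required bound.

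There is essentially no serious obstacle here, only some bookkeeping. The only point requiring care is that in the positive-merger expressions $[aedceb]$ and $[aec^{-1}d^{-1}eb]$ the edge $e$ appears twice, but this is exactly accounted for by the single copy of $e$ contributed by each of $l$ and $p$, so no spurious length is introduced beyond the $|p|=4$ edges of the plaquette. All other cases follow by the same counting.
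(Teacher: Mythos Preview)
Your proof is correct and follows essentially the same approach as the paper: bound $|s'|\le |s|+4$ from the four edges of the plaquette, then split the index estimate on whether the deformed loop vanishes, obtaining $\iota(s')\le\iota(s)+4$ in the surviving case and $\iota(s')\le\iota(s)-3$ in the vanishing case. Your write-up is somewhat more detailed (separating positive and negative mergers, tracking the double appearance of $e$), but the argument is the same.
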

\begin{proof}
A plaquette is composed of four edges. Hence $|s'|\leq |s|+4$. Since only one of the components loops of $s$ is deformed to get $s'$, either $\#s'=\#s$ or $\#s'=\#s-1$. In the first case,
$$\iota(s')=|s'|-\#s'\leq (|s|+4)-\#s=\iota(s)+4\, .$$
The second case can happen only if the deformed loop vanishes. Thus in this case, $|s'|\leq |s|-4$ and
$$\iota(s')=|s'|-\#s'\leq (|s|-4)-(\#s-1)=\iota(s)-3\, ,$$
which completes the argument.
\end{proof}
The next three lemmas are quoted without proof from \cite{chatterjee15}.
\begin{lmm}\label{split1}
Let $l$ be a non-null loop and suppose that $x$ and $y$ are two distinct locations in $l$ such that $l$ admits a positive splitting at $x$ and $y$. Let $l_1 := \times^1_{x,y} l$ and $l_2 := \times^2_{x,y} l$. Then $l_1$ and $l_2$ are non-null loops, $|l_1|\le |l| - |y-x|$, and $|l_2|\le |y-x|$. 
\end{lmm}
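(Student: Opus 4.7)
The plan is to unpack the definition of positive splitting and show that, because $l$ is already nonbacktracking, the claimed nonbacktracking cores are in fact just the obvious concatenations, so the length bounds reduce to edge counting. Without loss of generality assume $x<y$ in the positional ordering of $l$. Since positive splitting requires the same edge $e$ at locations $x$ and $y$, I can write $l = aebec$, where $a$ consists of the edges at positions $1,\ldots,x-1$, $b$ of the edges at positions $x+1,\ldots,y-1$, and $c$ of the edges at positions $y+1,\ldots,|l|$. By definition $l_1 = [aec]$ and $l_2 = [be]$.

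First I would check that $aec$ and $be$ are genuine closed paths. The junction conditions at the two copies of $e$ in $l$ give $v(a)=u(e)=v(b)$ and $v(e)=u(b)=u(c)$; together with the closure condition $v(c)=u(a)$ for $l$, these imply that $aec$ is a legitimate path (concatenation is well defined at every junction) and is closed, and similarly that $be$ is a closed path (via $v(b)=u(e)$ and $v(e)=u(b)$).

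The main step is to verify that $aec$ and $be$ have no backtracks at all, so that backtrack erasure does nothing. All internal junctions inside $a$, $b$, or $c$ are inherited from $l$ and are automatically nonbacktracking. The new junctions created by splitting are: $a\to e$ and $e\to c$ inside $aec$, the cyclic junction $c\to a$ of $aec$, $b\to e$ inside $be$, and the cyclic junction $e\to b$ of $be$. In each case the relevant pair of edges already appears consecutively (cyclically, if need be) in $l$: e.g.\ the first edge of $c$ is exactly the edge following the second $e$ in $l$, so it cannot equal $e^{-1}$; the cyclic $c\to a$ junction coincides with the cyclic junction of $l$; and so on. Since $l$ is nonbacktracking, none of these five pairs is a backtrack. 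Hence $l_1 = aec$ and $l_2 = be$ with no edges erased, giving the exact counts $|l_1| = (x-1) + 1 + (|l|-y) = |l|-(y-x)$ and $|l_2| = (y-x-1)+1 = y-x$, which are stronger than the stated inequalities. Non-nullity of $l_1$ and $l_2$ is immediate because each contains the edge $e$.

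The only mild subtlety is the degenerate case $|b|=0$ (i.e.\ $y=x+1$), which would force $e$ to be a self-loop via $v(e)=u(e)$; this does not occur on $\zz^d$ because every lattice edge joins two distinct vertices, so $|b|\ge 1$ and the counts above are valid. I do not expect any genuinely hard step here --- the content of the lemma is essentially that positive splitting is compatible with the nonbacktracking structure of $l$, which is a direct edge-by-edge verification.
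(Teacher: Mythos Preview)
The paper does not supply a proof of this lemma; it is quoted without proof from the companion paper \cite{chatterjee15}. Your argument is correct and in fact establishes more than is claimed: by checking that every junction of the closed paths $aec$ and $be$ --- including the cyclic ones --- already occurs as a consecutive (cyclically consecutive, where needed) pair of edges in $l$, you show that $aec$ and $be$ are themselves nonbacktracking, so taking the nonbacktracking core does nothing and one obtains the exact equalities $|l_1|=|l|-(y-x)$ and $|l_2|=y-x$ when $x<y$. Your handling of the degenerate case $y=x+1$ via the absence of self-loop edges in $\zz^d$ is also right; the analogous boundary cases where $a$ or $c$ is empty are covered by the same junction-checking argument without extra work.

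One small caveat: the phrase ``without loss of generality assume $x<y$'' is not quite innocent here, because the paper explicitly notes that swapping $x$ and $y$ reverses the ordered pair $(\times^1_{x,y}l,\,\times^2_{x,y}l)$. Thus for $x>y$ your computation actually yields $|l_1|=|y-x|$ and $|l_2|=|l|-|y-x|$, i.e.\ the two bounds in the statement trade places. This is harmless in every application of the lemma in the present paper --- the resulting lengths are always fed into expressions symmetric in the two split pieces (products of Catalan numbers, or the symmetric function $D_N$) --- but it is worth stating explicitly rather than absorbing into a WLOG.
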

\begin{lmm}\label{split2}
Let $l$ be a non-null loop and suppose that $x$ and $y$ are two distinct locations in $l$ such that $l$ admits a negative splitting at $x$ and $y$. Let $l_1 := \times^1_{x,y} l$ and $l_2 := \times^2_{x,y} l$. Then $l_1$ and $l_2$ are non-null loops, $|l_1|\le |l| - |y-x|-1$, and $|l_2|\le |y-x|-1$. 
\end{lmm}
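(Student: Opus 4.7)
The plan is to decompose $l$ around the two marked positions, read off the length bounds by counting edges, and then derive non-nullness from the assumption that $l$ itself is nonbacktracking. After cyclically rotating $l$ if necessary, we may assume $x<y$ (and interpret $|y-x|$ as $y-x$); write $l=aebe^{-1}c$ where $a$, $b$, $c$ are the initial segment of length $x-1$, the segment of length $y-x-1$ strictly between the two marked edges, and the final segment of length $|l|-y$. One has $|a|+|b|+|c|+2=|l|$, so $|a|+|c|=|l|-|y-x|-1$ and $|b|=|y-x|-1$. Since backtrack erasure can only decrease length, $|l_2|=|[b]|\le |b|=|y-x|-1$ and $|l_1|=|[ac]|\le |a|+|c|=|l|-|y-x|-1$, yielding the stated length inequalities.

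For non-nullness of $l_2=[b]$, note that $b$ starts and ends at $v(e)=u(e^{-1})$, so $b$ is a closed path and $[b]$ is a well-defined loop. Since $l$ is nonbacktracking, $b$ has no internal backtracks, and the same property is inherited by every intermediate word obtained by peeling off outermost edge pairs. If $[b]$ were null, the successive backtrack erasures reducing $b$ to the null path would therefore have to occur at the cyclic position, forcing at stage $j$ the relation $b_{j+1}=b_{|b|-j}^{-1}$. Closed walks in $\zz^d$ have even length, so the peeling terminates at the central pair $b_{|b|/2}b_{|b|/2+1}$; the final erasure would require this pair to be a backtrack, which would be an internal backtrack of $b$ and hence of $l$, contradiction.

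For non-nullness of $l_1=[ac]$, identify edges with free generators of the free group $F$ on $E^+$ (with $e^{-1}$ the formal inverse). The cycle $[ac]$ is null exactly when the word $ac$ represents the identity of $F$ up to conjugacy, which in a free group collapses to the identity itself. Both $a$ and $c$ are freely reduced (no internal backtracks, inherited from $l$), so $ac=1$ forces all cancellations to occur at the $a$-$c$ junction, which in turn forces $c=a^{-1}$ as words of equal length. But then the cyclic junction of $l$, from $c_{|c|}=a_1^{-1}$ back to $a_1$, is the backtrack $a_1^{-1}a_1$, contradicting the nonbacktrackingness of $l$. The degenerate case $a=c=\emptyset$ collapses to the immediate cyclic backtrack $e^{-1}e$ of $l=ebe^{-1}$.

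The main obstacle is the non-nullness of $[b]$: the closed path $b$ can a priori accommodate cyclic cancellations that are invisible at the level of $l$, and ruling them all out requires combining the inheritance of internal-backtrack-freeness from $l$ with the parity of closed $\zz^d$-walks. The remaining length bounds and the non-nullness of $[ac]$ become routine once the free-group perspective is in place.
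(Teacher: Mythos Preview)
The paper does not supply its own proof of this lemma; it is one of three results explicitly ``quoted without proof from \cite{chatterjee15}'' in Section~\ref{prelim}. There is therefore no in-paper argument to compare against.

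Your proof is correct in substance. The length bounds are immediate from the decomposition $l=aebe^{-1}c$, and both non-nullness arguments are sound. For $[b]$, the key points---that $b$ inherits internal-backtrack-freeness from $l$, that this property survives each cyclic peel, and that the parity of closed $\zz^d$-walks forces the final peel to be an internal backtrack---are all valid. For $[ac]$, the free-group reduction is the right tool: $[ac]=\emptyset$ means the cyclic reduction of $ac$ is trivial, which in a free group forces $ac=1$; since $a$ and $c$ are each freely reduced this gives $c=a^{-1}$, and then the last letter of $c$ followed by the first letter of $a$ is a cyclic backtrack of $l$.

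One small inaccuracy: the phrase ``after cyclically rotating $l$ if necessary, we may assume $x<y$'' is not the right justification, because rotating the representative changes the numerical labels $x,y$ and hence the quantity $|y-x|$ appearing in the conclusion. The cleaner observation is that the decomposition $l=aebe^{-1}c$ in the definition of negative splitting already places the $e$-occurrence before the $e^{-1}$-occurrence, so $x<y$ is built into the setup and no reduction is required.
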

\begin{lmm}\label{iota2}
If $s'$ is obtained from $s$ by a splitting operation, then $\iota(s')<\iota(s)$.
\end{lmm}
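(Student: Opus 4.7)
The plan is to prove Lemma \ref{iota2} by a direct count, combining the two preceding splitting lemmas. The key observation is that a splitting operation replaces a single loop in the minimal representation of $s$ by exactly two non-null loops, so $\#s'=\#s+1$, while the total length is not allowed to grow.

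In detail, suppose $s'$ is obtained from $s$ by splitting one component loop $l$ of the minimal representation of $s$ at locations $x,y$ into $l_1:=\times^1_{x,y}l$ and $l_2:=\times^2_{x,y}l$. Lemmas \ref{split1} and \ref{split2} guarantee that both $l_1$ and $l_2$ are non-null, so the two new loops survive into the minimal representation of $s'$; since the other components of $s$ are unchanged, this gives $\#s'=\#s+1$. The same lemmas bound $|l_1|+|l_2|\leq |l|$ in the positive case (adding $|l_1|\leq |l|-|y-x|$ and $|l_2|\leq|y-x|$) and $|l_1|+|l_2|\leq|l|-2$ in the negative case. Since splitting affects only the single component $l$, we conclude $|s'|\leq|s|$ in either case.

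Combining the two facts yields
\[
\iota(s')=|s'|-\#s'\leq|s|-(\#s+1)=\iota(s)-1<\iota(s),
\]
as claimed.

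The only nontrivial input is Lemmas \ref{split1} and \ref{split2}, which do the combinatorial work of controlling how backtrack erasure interacts with splitting and which are quoted from \cite{chatterjee15}. Once those bounds are in hand, the argument is essentially an accounting identity, and the strict inequality comes for free from the fact that splitting always produces \emph{exactly one more} loop in the minimal representation. This is the reason Lemma \ref{iota2} gives strict monotonicity of $\iota$, in contrast with Lemmas \ref{twist}, \ref{merger}, and \ref{deform}, where $\#s$ can fail to change and so $\iota$ need not decrease.
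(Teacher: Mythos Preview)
Your proof is correct. The paper does not actually give its own proof of this lemma; it is one of the three results quoted without proof from \cite{chatterjee15}. Your argument is the natural one and is exactly how the result is derived in that reference: combine the length bounds of Lemmas \ref{split1} and \ref{split2} with the fact that both pieces of a split are non-null (so $\#s'=\#s+1$) to get $\iota(s')\le\iota(s)-1$.
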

Recall the definition of the Catalan numbers: $C_0=1$, and for~$i\ge 1$, 
\[
C_i = \frac{1}{i+1}{2i\choose i} = {2i\choose i} - {2i \choose i+1}\,.
\]
We will use a well known recursion relation for Catalan numbers: For each $i\ge 0$, 
\begin{equation}\label{catalan}
C_{i+1} = \sum_{j=0}^i C_j C_{i-j}\,.
\end{equation}
We will also use the facts that $C_i$ is increasing in $i$ and that 
\begin{equation}\label{catalan2}
C_{i+1}\le 4C_i
\end{equation}
for each $i\ge 0$. Lastly, we will need the following lemma about Catalan numbers.
\begin{lmm}\label{app1}
Let $C_k$ be the $k^{\mathrm{th}}$ Catalan number. If $n, m$ are positive integers, then
$$C_{n+m-1}\leq (n+m)^2C_{n-1}C_{m-1}\,.$$
\end{lmm}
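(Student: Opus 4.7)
The plan is to pass from Catalan numbers to central binomial coefficients via the identity $C_i = \binom{2i}{i}/(i+1)$, and then bound $\binom{2n+2m-2}{n+m-1}$ using an \emph{asymmetric} Vandermonde convolution. The guiding idea is that splitting $2n+2m-2$ as $(2n-2)+2m$ makes the middle summand of the convolution equal to $\binom{2n-2}{n-1}\binom{2m}{m}$, which after a single elementary ratio estimate converts cleanly to the factors $\binom{2n-2}{n-1}\binom{2m-2}{m-1}$ that match the desired right-hand side.

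Explicitly, Vandermonde's identity gives
\[
\binom{2n+2m-2}{n+m-1} \;=\; \sum_{k}\binom{2n-2}{k}\binom{2m}{n+m-1-k}\,,
\]
where the sum has at most $n+m$ nonzero terms (as is readily checked from the support of the two binomial factors). The first factor is maximized at $k=n-1$, and at that same value the second factor becomes $\binom{2m}{m}$, which is also the maximum of its row. Hence every summand is at most $\binom{2n-2}{n-1}\binom{2m}{m}$, and therefore
\[
\binom{2n+2m-2}{n+m-1}\;\le\;(n+m)\binom{2n-2}{n-1}\binom{2m}{m}\,.
\]

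To finish, I would apply the elementary ratio $\binom{2m}{m}=\frac{2(2m-1)}{m}\binom{2m-2}{m-1}\le 4\binom{2m-2}{m-1}$, the identity $\binom{2i-2}{i-1}=iC_{i-1}$, and the relation $C_{n+m-1}=\binom{2n+2m-2}{n+m-1}/(n+m)$. Dividing through by $n+m$ collapses the estimate to the clean intermediate bound
\[
C_{n+m-1}\;\le\;4nm\,C_{n-1}C_{m-1}\,,
\]
and the AM--GM inequality $4nm\le (n+m)^2$ then immediately yields the claim, with some slack to spare.

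There is essentially no substantive obstacle beyond choosing the correct Vandermonde split. The symmetric alternative $(2n-2)+(2m-2)$ would produce $\binom{2n+2m-4}{n+m-2}$ on the left and force an additional factor of $4$ to recover $\binom{2n+2m-2}{n+m-1}$; the asymmetric split $(2n-2)+2m$ avoids this loss and keeps the arithmetic the shortest.
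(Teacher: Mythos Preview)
Your proof is correct. The Vandermonde split $(2n-2)+2m$ works exactly as you describe: each of the at most $n+m$ summands is bounded by $\binom{2n-2}{n-1}\binom{2m}{m}$, and after the ratio $\binom{2m}{m}\le 4\binom{2m-2}{m-1}$ and the identity $\binom{2i-2}{i-1}=iC_{i-1}$ you obtain the sharper intermediate inequality $C_{n+m-1}\le 4nm\,C_{n-1}C_{m-1}$, from which AM--GM gives the stated bound.

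This is a genuinely different argument from the paper's. The paper proceeds by induction on $n+m$: it verifies the cases $n+m\le 6$ by hand, and for $n+m\ge 7$ (assuming $n\ge m$, so $n\ge 4$) it uses the ratio $C_k=\frac{4k-2}{k+1}C_{k-1}$ to relate $C_{n-1}$ to $C_{n-2}$ and $C_{n+m-1}$ to $C_{n+m-2}$, reducing to the inductive hypothesis at $(n-1,m)$; closing the induction then requires the algebraic verification $\bigl(\frac{n+m}{n+m-1}\bigr)^2\ge \frac{n(4n+4m-6)}{(4n-6)(n+m)}$, which is checked at the diagonal $n=m$. Your direct approach avoids both the case analysis and this algebraic inequality, and in fact yields the tighter constant $4nm$ rather than $(n+m)^2$. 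The paper's route stays closer to the recursive structure of Catalan numbers that is used elsewhere in the argument, but for this lemma in isolation your Vandermonde argument is shorter and more transparent.
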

\begin{proof}
The proof is by induction on $n+m$. If $n+m\leq 5$, then it is easy to check by direct computation that 
\[
C_{n+m-1}\leq (n+m)^2\,.
\]
If $n+m=6$, then $\max\{n,m\}\geq 3$ and 
\[
C_{n+m-1}\leq (n+m)^2C_2 \leq (n+m)^2C_{n-1}C_{m-1}\,.
\]
Now assume that the claimed inequality is true up to $n+m=p$ for some $p\geq 7$. Without loss of generality assume $n\geq m$ so that $n\geq 4$. Note that
\begin{align*}
C_{k}&=\dfrac{1}{k+1}\dbinom{2k}{k}=\dfrac{4k-2}{k(k+1)}\dbinom{2k-2}{k-1}=\dfrac{4k-2}{k+1}C_{k-1}\,.
\end{align*}
By induction hypothesis,
\begin{align*}
(n+m)^2C_{n-1}C_{m-1}&=\dfrac{4n-6}{n}\biggl(\dfrac{n+m}{n+m-1}\biggr)^2 (n+m-1)^2C_{n-2}C_{m-1}\\
&\geq \dfrac{4n-6}{n}\biggl(\dfrac{n+m}{n+m-1}\biggr)^2 C_{n+m-2}\\
&=\dfrac{(4n-6)(n+m)}{n(4n+4m-6)}\biggl(\dfrac{n+m}{n+m-1}\biggr)^2 C_{n+m-1}\,.
\end{align*}
So it is enough to prove that
\begin{align}
\biggl(\dfrac{n+m}{n+m-1}\biggr)^2 &\geq \dfrac{n(4n+4m-6)}{(4n-6)(n+m)}\, . \label{catalaneq}
\end{align}
Observe that the left side is decreasing in $m$ whereas the right side is increasing. So it is enough to prove the inequality \eqref{catalaneq} for $n=m$. In this case the inequality reduces to $4n^2+3\geq 16n$, which is true because $n\ge 4$.
\end{proof}

\section{The unsymmetrized master loop equation}\label{unsymmsec}
Let $s$ be a loop sequence with minimal representation $(l_1,\ldots, l_n)$. For each $1\leq r\leq n$, let $A_r(e)$ be the set of locations in $l_r$ where edge $e$ occurs, and let $B_r(e)$ be the set of locations in $l_r$ where $e^{-1}$ occurs. Let $C_r(e)=A_r(e)\cup B_r(e)$. We will simply write $A_r, B_r, C_r$ whenever $e$ is clear from the context. 

Now let $e$ be the first element of $l_1$ and let  $m$ be the size of $C_1$. For any loop function $h: \cs \to \rr$ let
\begin{align*}
\sum_{\text{twist}\,s}h&=\sum_{\substack{x, y\in A_1\\x\ne y}}h(\propto_{x,y} l_1,\ldots, l_n) + \sum_{\substack{x, y\in B_1\\x\ne y}} h(\propto_{x,y} l_1, \ldots, l_n) \\
&\qquad -\sum_{x\in A_1, \, y\in B_1} h(\propto_{x,y} l_1,\ldots,l_n) -  \sum_{x\in B_1, \, y\in A_1} h(\propto_{x,y} l_1,\ldots,l_n)\, .
\end{align*}
Similarly, let
\begin{align*}
\sum_{\text{merge}\,s}h&:=\sum_{r=2}^n\sum_{x\in C_1, \, y\in C_r} h(l_1 \ominus_{x,y} l_r,\dots, l_n) - \sum_{r=2}^n\sum_{x\in C_1, \, y\in C_r}h(l_1 \oplus_{x,y} l_r, \dots, l_n)\, ,
\end{align*}
\begin{align*}
\sum_{\text{split}\,s}h&:=\sum_{x\in A_1, \, y\in B_1} h(\times_{x,y}^1 l_1,\times_{x,y}^2 l_1,\ldots,l_n)+ \sum_{x\in B_1, \, y\in A_1} h(\times_{x,y}^1 l_1,\times_{x,y}^2 l_1,\ldots,l_n)\\
&\qquad - \sum_{\substack{x,y\in A_1\\ x\ne y}} h(\times^1_{x,y} l_1,\times^2_{x,y} l_1, \ldots,l_n)-\sum_{\substack{x,y\in B_1\\ x\ne y}} h(\times^1_{x,y} l_1,\times^2_{x,y} l_1, \ldots,l_n)\, ,
\end{align*}
and
\begin{align*}
\sum_{\text{deform}\,s}h&:=\sum_{p\in \cp^+(e)}\sum_{x\in C_1}h(l_1 \ominus_{x}p, \ldots,l_n) - \sum_{p\in \cp^+(e)}\sum_{x\in C_1}h(l_1 \oplus_{x}p,\ldots,l_n) \, .
\end{align*}
Let 
\[
\sideset{}{^+}\sum\limits_{\text{twist}\,s}, \ \ \sideset{}{^+}\sum\limits_{\text{merge}\,s}, \ \ \sideset{}{^+}\sum\limits_{\text{split}\,s}, \  \ \sideset{}{^+}\sum\limits_{\text{deform}\,s}
\]
denote sums as above with minus signs between terms replaced by plus signs. The goal of this section is to prove the following result.
\begin{thm}\label{eqfuns}
There exist positive constants $\{\beta_0(d,k)\}_{k\ge 0}$ depending on $d$ and functions $\{f_k\}_{k\ge 0}$ depending on $\beta$ such that the assertions of part \textup{(ii)} of Theorem \ref{mainthmofpaper} are valid when $|\beta|\le \beta_0(d,k)$. Moreover, the sequence of functions $\{f_k\}_{k\ge 0}$ satisfies the recursive relation
\begin{align}
mf_k(s) &= mf_{k-1}(s)+\sum_{\textup{twist }s} f_{k-1}+ \sum_{\textup{merge }s}f_{k-2} +\sum_{\textup{split }s}f_{k} + \beta \sum_{\textup{deform }s}f_k \label{limequns}
\end{align}
for each $s\in \cs$, where $m = m(s)$ is the size of the set $C_1$ defined above, and $f_{j}(s)$ is understood to be zero when~$j<0$. 
\end{thm}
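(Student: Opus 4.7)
The plan is to proceed via an exact finite-$N$ master loop equation combined with a triple induction on $(k,\iota(s),|s|)$. The first step is to derive an unsymmetrized Makeenko--Migdal identity for $\phi_N(s)$ in $SO(N)$ theory by Schwinger--Dyson integration by parts against the Haar measure on $SO(N)$, extending the identity used in \cite{chatterjee15} by carefully retaining all orders in $1/N$. This identity expresses $m\phi_N(s)$ as a weighted sum of $\phi_N$ evaluated at various modified loop sequences: an inaction-like term $m\phi_N(s)(1+O(N^{-1}))$, together with twist, split, and deformation sums at leading order, a merger sum carrying an intrinsic factor of $1/N^2$ (arising from the mismatch between the normalizations $N^{-\#s}$ and $N^{-(\#s-1)}$ of $\phi_N(s)$ and of the merged sequence, combined with the extra $1/N$ built into the $SO(N)$ Schwinger--Dyson relation), and an explicit remainder $R_N(s)$ of order $N^{-(k+1)}$.

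Substituting the ansatz $\phi_N(s)=\sum_{j=0}^k N^{-j}f_j(s)+o(N^{-k})$ into this identity and collecting the coefficient of $N^{-k}$ produces exactly the recursion \eqref{limequns}: the $mf_{k-1}(s)$ term comes from the $O(1/N)$ piece of the inaction correction, the merger contribution becomes $f_{k-2}$ because of the $1/N^2$ prefactor, and the splitting and deformation terms preserve the index $k$. To construct $f_k$ itself, I would define it directly as the trajectory sum $\sum_{X\in\mx_k(s)}w_\beta(X)$ and verify \eqref{limequns} combinatorially by conditioning on the first operation of a vanishing trajectory. The weight conventions \eqref{weight}, together with the genus-accounting convention (inactions and twistings each contributing $1/2$, mergers contributing $1$, splittings and deformations contributing $0$), then reproduce the recursion term by term after multiplying through by $m$.

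For part (i) of Theorem \ref{mainthmofpaper} and the growth bound of part (iii), I would carry out the triple induction on $k$, on $\iota(s)$, and on $|s|$. Lemmas \ref{twist}--\ref{deform} and Lemma \ref{iota2} control the behaviour of length and index under each type of operation, and Lemma \ref{app1} bounds the Catalan-type combinatorial explosion generated by successive splittings inside a trajectory (the branching structure of a vanishing trajectory is dominated by Catalan counts of planar trees of splittings, modified by the handles that mergers introduce). The strong coupling threshold $\beta_0(d,k)$ is chosen so that the geometric sums coming from repeated deformations (each of weight $O(\beta)$ but multiplicity $O(d)$ from $\cp^+(e)$) and from mergers converge with slack, producing the stated bound $|f_k(s)|\le(2^{2k+13}d)^{|s|}$.

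Finally, part (ii) follows by subtracting $\sum_{j=0}^k N^{-j}f_j(s)$ from both sides of the finite-$N$ master loop equation and invoking \eqref{limequns} to cancel the leading contributions, thereby reducing the residual to $o(N^{-k})$; the passage $\Lambda_N\uparrow\zz^d$ is handled by the exponential decay of Wilson loop correlations at strong coupling, which makes the equation stable under $\Lambda$-enlargement. The main obstacle is the simultaneous treatment of the splitting term (which involves $f_k$ itself and therefore forces the inner induction on $\iota(s)$ via Lemma \ref{iota2}) and the merger term (which couples $f_k$ to $f_{k-2}$ and reflects the genuinely new phenomenon at higher genus absent from \cite{chatterjee15}: strings that have previously split may now rejoin, producing handles in the traced surface and breaking the tree-like combinatorics of the $k=0$ analysis). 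This coupling is the reason $\beta_0(d,k)$ must be allowed to shrink with $k$, and is the source of the three-fold induction replacing the two-fold one of \cite{chatterjee15}.
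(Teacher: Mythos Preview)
Your plan has a genuine gap at the crucial analytic step. You write that part (ii) ``follows by subtracting $\sum_{j=0}^k N^{-j}f_j(s)$ from both sides of the finite-$N$ master loop equation and invoking \eqref{limequns} to cancel the leading contributions, thereby reducing the residual to $o(N^{-k})$.'' But this subtraction only yields a \emph{recursive equation} for the residual $H_{k,N}(s) := N^k\bigl(\phi_N(s) - \sum_{j<k} N^{-j}f_j(s)\bigr)$, namely
\[
mH_{k,N}(s) = mH_{k-1,N}(s) + \textstyle\sum_{\text{twist}} H_{k-1,N} + \sum_{\text{merge}} H_{k-2,N} + \sum_{\text{split}} H_{k,N} + \beta\sum_{\text{deform}} H_{k,N},
\]
where the split and deform terms involve $H_{k,N}$ itself. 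Closing this into a uniform-in-$N$ bound is the whole difficulty, and your proposed triple induction on $(k,\iota(s),|s|)$ cannot do it: deformation can \emph{increase} both $\iota(s)$ and $|s|$ (Lemma~\ref{deform}), so there is no well-founded order on which to induct at fixed $k$. The paper instead introduces a generating function over degree sequences, $F_N(\lambda) = \sum_{\delta}\lambda^{\iota(\delta)}\sup_{\delta(s)\le\delta}|H_{k,N}(s)|$, shows it is finite for small $\lambda$ via a crude a priori bound, and then uses the recursion to derive $F_N(\lambda)\le C + cF_N(\lambda)$ with $c<1$ for suitable $\lambda,\beta$; this bootstraps to $|H_{k,N}(s)|\le L_k^{|s|}$ uniformly in $N$. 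A diagonal argument then gives a subsequential limit $f_k$ satisfying \eqref{limequns}, and a separate \emph{uniqueness} theorem (reducing to the $k=0$ case of \cite{chatterjee15}) upgrades this to full convergence.

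Two smaller issues. Your alternative of defining $f_k$ directly as the trajectory sum and verifying \eqref{limequns} combinatorially is a legitimate reordering, but conditioning on the first step of a trajectory naturally produces the \emph{symmetrized} recursion with $|s|$ in place of $m$; obtaining the unsymmetrized \eqref{limequns} requires further work. And the passage $\Lambda_N\uparrow\zz^d$ is not handled via correlation decay in the paper: the master loop equation is exact once $\Lambda_N$ contains a neighbourhood of $s$, and the volume dependence is absorbed into the uniform bound.
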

Note that the above theorem does not make the claim that $f_k$ has the form given in part (i) of Theorem \ref{mainthmofpaper}. It merely asserts the existence of $f_k$'s such that part (ii) is valid. The starting point of the proof of Theorem \ref{eqfuns} is the following ``master loop equation'' proved in \cite{chatterjee15}. 
\begin{thm}\label{mastern}
Let $\phi_N = \phi_{\Lambda_N, N,\beta}$ be as in Theorem \ref{mainthmofpaper}. 
Take a loop sequence $s$  such that all vertices of $\zz^d$ that are at distance $\le 1$ from any of the component loops of $s$ are contained in $\Lambda_N$. Then 
\begin{align}
m\phi_N(s)&=\dfrac{1}{N-1}\sum_{\textup{twist}\,s}\phi_N+\dfrac{1}{N(N-1)}\sum_{\textup{merge}\,s}\phi_N+\dfrac{N}{N-1}\sum_{\textup{split}\,s}\phi_N+\dfrac{\beta N}{N-1}\sum_{\textup{deform}\,s}\phi_N\,, \label{mlequns}
\end{align}
where $m$ is the size of the set $C_1$ defined above.
\end{thm}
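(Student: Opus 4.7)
My plan is to prove Theorem~\ref{eqfuns} by strong induction on $k$, taking the finite-$N$ master loop identity \eqref{mlequns} as the starting point and extracting the coefficient of $N^{-k}$ in an expansion in powers of $1/N$. The base case $k=0$ is the content of \cite{chatterjee15}: there exists $f_0$ with $\phi_N\to f_0$, satisfying the $k=0$ instance of \eqref{limequns}, namely $mf_0 = \sum_{\textup{split}\,s}f_0 + \beta\sum_{\textup{deform}\,s}f_0$. For the inductive step, I assume that $f_0,\ldots,f_{k-1}$ have already been constructed for $|\beta|\le\beta_0(d,k-1)$, satisfy the recursion \eqref{limequns} at each lower level, and give the $1/N$ expansion of $\phi_N$ through order $k-1$ in the sense of part~(ii) of Theorem~\ref{mainthmofpaper}.

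The first step is algebraic. Multiplying \eqref{mlequns} by $(N-1)/N$ clears denominators and yields
\[
m\phi_N = \frac{m}{N}\phi_N + \frac{1}{N}\sum_{\textup{twist}\,s}\phi_N + \frac{1}{N^2}\sum_{\textup{merge}\,s}\phi_N + \sum_{\textup{split}\,s}\phi_N + \beta\sum_{\textup{deform}\,s}\phi_N.
\]
Setting $R_N(s):=N^k\bigl(\phi_N(s)-\sum_{j=0}^{k-1}N^{-j}f_j(s)\bigr)$ and plugging $\phi_N=\sum_{j<k}N^{-j}f_j+N^{-k}R_N$ into this identity, the inductive recursions cancel all contributions at orders $N^{0},N^{-1},\ldots,N^{-(k-1)}$ on both sides. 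After multiplying the leftover by $N^k$, I arrive at the perturbed identity
\[
mR_N = mf_{k-1}+\sum_{\textup{twist}\,s}f_{k-1}+\sum_{\textup{merge}\,s}f_{k-2}+\sum_{\textup{split}\,s}R_N+\beta\sum_{\textup{deform}\,s}R_N+\frac{1}{N}\mathcal{E}_N(s)+\frac{1}{N^2}\sum_{\textup{merge}\,s}R_N,
\]
where $\mathcal{E}_N(s)=mR_N(s)+\sum_{\textup{twist}\,s}R_N+\sum_{\textup{merge}\,s}f_{k-1}$ collects the $N^{-k-1}$-order remainders coming from $\frac{m}{N}\phi_N$, $\frac{1}{N}\sum_{\textup{twist}}\phi_N$, and $\frac{1}{N^2}\sum_{\textup{merge}}\phi_N$.

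Next, I would construct $f_k$ and verify $R_N\to f_k$ simultaneously. Define $f_k$ as the unique bounded solution of \eqref{limequns}, viewed as the fixed-point equation
\[
f_k(s)=\frac{1}{m(s)}\Bigl(mf_{k-1}+\sum_{\textup{twist}\,s}f_{k-1}+\sum_{\textup{merge}\,s}f_{k-2}+\sum_{\textup{split}\,s}f_k+\beta\sum_{\textup{deform}\,s}f_k\Bigr),
\]
built by a double induction on the index $\iota(s)$ and length $|s|$, exactly as for the construction of $f_0$ in \cite{chatterjee15}. By Lemma~\ref{iota2} the split sum is determined by $f_k$ at strictly smaller $\iota$; by Lemma~\ref{deform} the deform sum may increase $\iota$ by at most $4$ but comes with the small prefactor $\beta$. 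Together with Lemmas~\ref{twist} and~\ref{merger} controlling the sources from $f_{k-1}$, $f_{k-2}$, this produces a weighted $\ell^\infty$ bound $|f_k(s)|\le C_k^{|s|}$ once $|\beta|\le\beta_0(d,k)$ is chosen small enough to make the split-plus-deform operator a strict contraction in the weighted norm. Setting $D_N:=R_N-f_k$ and subtracting the two identities gives
\[
mD_N = \sum_{\textup{split}\,s}D_N+\beta\sum_{\textup{deform}\,s}D_N+\frac{1}{N}\mathcal{E}_N+\frac{1}{N^2}\sum_{\textup{merge}\,s}R_N,
\]
and the same $\iota$-induction shows that once the $N^{-1}$-source is uniformly bounded, $|D_N(s)|=O(1/N)$ on every fixed $s$. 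This is precisely the level-$k$ statement of part~(ii) of Theorem~\ref{mainthmofpaper}, and letting $N\to\infty$ in the perturbed identity for $R_N$ recovers \eqref{limequns} for $f_k$.

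The main technical obstacle is the a priori uniform-in-$N$ bound on $R_N$ required to know that $\mathcal{E}_N$ really is $O(1)$: the inductive hypothesis only directly gives $R_N=o(N)$, so one must bootstrap boundedness from the perturbed identity itself. I would do this by rewriting the identity as $R_N=\Phi(R_N)+(\text{sources from }f_{k-1},f_{k-2})+O(1/N)(\cdots R_N\cdots)$, with $\Phi$ the split-plus-deform contraction, and running the same $\iota$-and-$|s|$ induction that produced the $C_k^{|s|}$ bound on $f_k$. This forces $\beta_0(d,k)$ to strictly shrink with $k$ so that the contraction constant stays below $1$ uniformly across all induction levels that appear as $k$ grows, and it is here --- rather than in the algebraic derivation of \eqref{limequns} --- that the real difficulty lies.
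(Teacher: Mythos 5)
Your proposal does not prove the statement at issue. The statement is Theorem \ref{mastern}, the \emph{exact finite-$N$ master loop equation} \eqref{mlequns} for $\phi_N$, and your very first sentence takes \eqref{mlequns} ``as the starting point'' and then sketches a proof of Theorem \ref{eqfuns} (the existence of the $1/N$ expansion and the limiting recursion \eqref{limequns}). In other words, you assume the theorem you were asked to prove and prove a different one downstream of it. Nothing in your argument engages with the objects that \eqref{mlequns} is actually about: the $SO(N)$ Haar measure, the Gibbs weight $\exp(N\beta\sum_p \tr Q_p)$, or the trace structure of the Wilson loop variables. An asymptotic/contraction argument in powers of $1/N$ cannot yield an identity that holds exactly at every finite $N$ with the specific coefficients $1/(N-1)$, $1/(N(N-1))$, $N/(N-1)$, $\beta N/(N-1)$.

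A genuine proof of Theorem \ref{mastern} has to be an integration-by-parts (Schwinger--Dyson / Stein-type) computation at fixed $N$: one perturbs the matrix $Q_e$ attached to the first edge $e$ of $l_1$ along the Lie algebra of $SO(N)$ (antisymmetric matrices), differentiates under the Haar integral, and identifies the resulting terms. Differentiating the Wilson loop factors at the $m$ occurrences of $e$ and $e^{\pm1}$ in $l_1$ produces the left side $m\phi_N(s)$ together with the twisting and splitting sums (with their $N$-dependent prefactors coming from the quadratic form on antisymmetric matrices, which is where the $N-1$ arises); differentiating the other loops $l_2,\ldots,l_n$ produces the merger sum; and differentiating the exponential of the plaquette action produces the deformation sum with the factor $\beta N$. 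That computation is carried out in \cite{chatterjee15}, and indeed the present paper does not reprove it but quotes it; your write-up, however, was supposed to supply it and instead silently presupposes it, so the gap is total rather than technical. (Your sketch of Theorem \ref{eqfuns} is broadly in the spirit of Section \ref{unsymmsec} of the paper, but that is not the statement under review.)
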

Using the master loop equation, it was proved in \cite{chatterjee15} that part (ii) of Theorem \ref{mainthmofpaper} holds for $k=0$. To be precise, the following result was proved. 
\begin{thm}\label{masterf0uns}
Let $\phi_N = \phi_{\Lambda_N, N,\beta}$ be as in Theorem \ref{mainthmofpaper}. 
There exists a number $\beta_0(d,0)>0$ such that if $|\beta|\le \beta_0(d,0)$, then $f_0(s) = \lim_{N\ra\infty} \phi_N(s)$ exists for each $s\in \cs$, and satisfies the recursive relation
\begin{align}
mf_0(s) &= \sum_{\textup{split}\,s} f_0+ \beta \sum_{\textup{deform}\,s} f_0\, , \label{mastereqf0uns}
\end{align}
where $m$ is the size of the set $C_1$ defined above.
\end{thm}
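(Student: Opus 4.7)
My plan is to push $N\to\infty$ directly in the master loop equation \eqref{mlequns}. The coefficient of $\sum_{\mathrm{twist}\,s}$ is $1/(N-1)\to 0$ and the coefficient of $\sum_{\mathrm{merge}\,s}$ is $1/(N(N-1))\to 0$, whereas the coefficients of the split and deform sums converge to $1$ and $\beta$ respectively. Since each of the four sums in \eqref{mlequns} is finite (the number of pairs of locations in $C_1$ and the number of plaquettes containing $e$ are both bounded polynomially in $|s|$ and $d$), if I can show that $\phi_N(s')$ has a well-defined limit $f_0(s')$ for every loop sequence $s'$ appearing on the right-hand side, then dividing through by $m$ and passing to the limit immediately yields \eqref{mastereqf0uns} with $f_0(s):=\lim_{N\to\infty}\phi_N(s)$.

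The real work is therefore to establish existence of $\lim_{N\to\infty}\phi_N(s)$ together with a uniform bound of the form $|\phi_N(s)|\le C(d)^{|s|}$. I would do both at once by iterating the master loop equation: solving \eqref{mlequns} for $\phi_N(s)$ and substituting recursively into itself expresses $\phi_N(s)$ as a (formally infinite) series indexed by trajectories of loop-sequence operations, where each transition weight is a rational function of $N$ carrying one of the prefactors $\pm 1/(m(N-1))$, $\pm 1/(mN(N-1))$, $\pm N/(m(N-1))$, or $\pm\beta N/(m(N-1))$. The genus-zero trajectories contribute an $N$-independent limit, while trajectories containing twistings or mergers each carry at least one extra factor of $1/N$ and thus vanish in the limit.

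To bound the resulting series absolutely and uniformly in $N$, I would rely on the length/index lemmas of Section~\ref{prelim}. Splittings strictly reduce $\iota$ (Lemma~\ref{iota2}); twistings and mergers do not increase $\iota$ by much (Lemmas~\ref{twist}, \ref{merger}) and carry extra $1/N$ suppression; deformations raise $\iota$ by at most $4$ but come with a factor of $\beta$ (Lemma~\ref{deform}). The number of operations available at a given step is polynomial in $|s|$, and the branching from a splitting produces two loops whose lengths and indices are controlled by Lemmas~\ref{split1}--\ref{split2}. Applying the Catalan-type product estimate of Lemma~\ref{app1} to bound the combinatorial cost of each branching, one obtains an estimate of the form $\sum_{\mathrm{traj}}|\mathrm{weight}|\le C(d)^{|s|}$, uniformly in $N$, provided $|\beta|$ lies below an explicit threshold $\beta_0(d,0)$ depending only on $d$. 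Once this uniform bound is in hand, dominated convergence applied term-by-term to the trajectory expansion yields the existence of $f_0(s)=\lim_{N\to\infty}\phi_N(s)$, and the recursion follows as outlined in the first paragraph.

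The main obstacle is balancing the geometric growth from deformations (which increase $|s|$ by up to $4$, enlarging all subsequent sums and thereby multiplying the number of available operations at later steps) against the absence of any a priori bound on trajectory length. The $\beta$-factor attached to each deformation is what ultimately makes the series summable, but one must simultaneously control the splitting branches, whose product structure is exactly what Lemma~\ref{app1} is designed to handle. Pinning down the explicit value of $\beta_0(d,0)$ is then a bookkeeping exercise built on a joint induction over $\iota(s)$ and the remaining number of deformations allowed by the $\beta$-budget.
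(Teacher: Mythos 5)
Your opening step is fine and is exactly how the limiting equation is extracted once existence of the limit is known: for fixed $s$ the split and deform sums in \eqref{mlequns} are finite sums, the twist and merge terms are $O(1/N)$ because $|\phi_N|\le 1$, and \eqref{mlequns} is available for all large $N$ since $\Lambda_N\uparrow\zz^d$. The gap is in the existence argument. Expanding $\phi_N(s)$ itself as an infinite trajectory series with $N$-dependent weights requires two things your sketch does not supply. First, absolute convergence of the formal series does not identify its sum with $\phi_N(s)$: after $M$ substitutions the exact identity carries a remainder, namely the weighted sum over all \emph{unterminated} partial trajectories of length $M$ of $\phi_N$ evaluated at the current loop sequence, and you must show this tends to $0$ as $M\to\infty$, uniformly in $N$. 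This is not a soft point: one application of \eqref{mlequns} contributes splitting terms of total weight as large as order $|s|$, so there is no per-step contraction; convergence of the iteration must come from the decrease of $\iota$ under splittings (Lemma \ref{iota2}) balanced against deformations, and making that precise for partial (not vanishing) trajectories is exactly the kind of generating-function estimate carried out via $F_N(\lambda)$ in Lemma \ref{boundHq} — your proposal asserts the conclusion but never formulates such an estimate. Second, at fixed $N$ the equation \eqref{mlequns} is valid only for loop sequences whose $1$-neighborhood lies in the finite box $\Lambda_N$; repeated deformations produce loops that eventually leave $\Lambda_N$, so the recursion cannot be iterated indefinitely at fixed $N$ and the expansion of $\phi_{\Lambda_N,N,\beta}(s)$ over all trajectories in $\zz^d$ is not literally available. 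One has to truncate at finite depth (so that for large $N$ every loop sequence encountered lies in $\Lambda_N$) and interlace the limits $M\to\infty$ and $N\to\infty$, which again requires the remainder estimate above. Finally, your uniform-in-$N$ domination discusses only the splitting/deformation balance, but the finite-$N$ expansion also branches into twisting and merger terms, with multiplicities of order $m^2$ and $m|s|$ and prefactors $1/(N-1)$ and $1/(N(N-1))$, and mergers increase $\iota$ (Lemma \ref{merger}); these must enter the bookkeeping before dominated convergence can be invoked.

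For comparison, the proof this paper relies on (quoted from \cite{chatterjee15}, and mirrored at general $k$ by Lemma \ref{boundHq} together with Theorems \ref{uniquef0uns} and \ref{uniquefk}) avoids any finite-$N$ trajectory expansion: since $|\phi_N(s)|\le 1$, a diagonal argument produces subsequential limits; the computation in your first paragraph shows that every subsequential limit satisfies \eqref{mastereqf0uns} and is bounded by $1$; and the uniqueness theorem for bounded solutions of \eqref{mastereqf0uns} at small $|\beta|$ (Theorem \ref{uniquef0uns}) then forces all subsequential limits to coincide, which is the asserted convergence. The representation of $f_0$ as a sum over genus-zero trajectories is established afterwards, for the limit function only, where the weights are $N$-free and the volume issue disappears. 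If you wish to keep your route, be aware that essentially all of the work hidden in Theorem \ref{uniquef0uns} reappears as the missing contraction estimate for partial trajectories.
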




We are now ready to start the proof of Theorem \ref{eqfuns}. The proof is by induction on $k$. As mentioned above, the case $k=0$ is already known (Theorem \ref{masterf0uns}). Take some $k>0$ and assume that the claim is true for all $k'<k$. In particular, assume that $f_{k'}$ exists for any $k'<k$. Take some $s\in \cs$ and let all notation be as in the beginning of this section. Observe that by the induction hypothesis, the master loop equation \eqref{limequns} can be rewritten as
\begin{align}
mf_{k'}(s) &= \sum_{\text{twist }s} (f_{0}+\cdots+f_{k'-1})+ \sum_{\text{merge }s}(f_{0}+\cdots+f_{k'-2})\notag\\
&\qquad +\sum_{\text{split }s}(f_{0}+\cdots+f_{k'}) + \beta \sum_{\text{deform }s}(f_{0}+\cdots+f_{k'}) \label{lform}
\end{align}
for any $k'< k$. Define sequence of increasing numbers $\{L_q: q\geq 0\}$ as $L_0=1$ and 
\begin{align}
L_q= (36L_{q-1})^{4/3} \notag 
\end{align}
for $q\geq 1$. Fixing $k$ and working under the hypothesis that \eqref{lform} holds for all $k'<k$, we continue our proof with the following lemma. Throughout, $\Lambda_N$ and  $\phi_N = \phi_{\Lambda_N, N,\beta}$ are as in Theorem \ref{mainthmofpaper}.
\begin{lmm}\label{boundHq}
Take any $N$. Let $s$ be a non-null loop sequence such that all vertices of $\zz^d$ that are at distance $\le 1$ from any of the component loops of $s$ are contained in $\Lambda_N$. For any $0\leq q\leq k$, there exists $\beta_0'(d, q)>0$, depending only on $q$ and $d$, such that for any $|\beta|\leq \beta_0'(d,q)$ and any $s$ as above,
\begin{align}
\biggl|N^q\biggl(\phi_N(s)-f_0(s)-\frac{1}{N}f_1(s)-\cdots-\frac{1}{N^{q-1}}f_{q-1}(s)\biggr)\biggr|\leq L_q^{|s|}\,. \label{ineqHq}
\end{align}
\end{lmm}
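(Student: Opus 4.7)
The proof is by induction on $q$. The base case $q=0$ is immediate: since $|W_l|\le N$ for every closed loop $l$ (each $Q_e\in SO(N)$ has operator norm $1$), $|\phi_N(s)|\le 1=L_0^{|s|}$.

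For the inductive step at $q\ge 1$, abbreviate $H(t):=N^q\bigl(\phi_N(t)-\sum_{j=0}^{q-1}N^{-j}f_j(t)\bigr)$ for each loop sequence $t$. The plan is to derive a self-consistent equation for $H(s)$ by substituting $\phi_N(t)=\sum_{j=0}^{q-1}N^{-j}f_j(t)+N^{-q}H(t)$ into the master loop equation~\eqref{mlequns} and then subtracting $\sum_{k'=0}^{q-1}N^{-k'}\,mf_{k'}(s)$ from both sides with the help of the recursion~\eqref{lform} for each $f_{k'}$ with $k'<q$ (available from the outer induction on $k$ structuring the proof of Theorem~\ref{eqfuns}). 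After multiplying through by $N^q$ this yields the identity
\begin{align*}
mH(s) &= \frac{1}{N-1}\sum_{\textup{twist}\,s} H + \frac{1}{N(N-1)}\sum_{\textup{merge}\,s} H \\
&\quad + \frac{N}{N-1}\sum_{\textup{split}\,s} H + \frac{\beta N}{N-1}\sum_{\textup{deform}\,s} H + E_q^N(s),
\end{align*}
where $E_q^N(s)$ collects the uncanceled tails of the geometric series expansions of $\tfrac{1}{N-1}$ and $\tfrac{N}{N-1}$, and is an explicit polynomial in $1/N$ whose coefficients are sums of $f_0,\ldots,f_{q-1}$ applied to single twist/merge/split/deform images of $s$.

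By the outer induction hypothesis on $k$ (which at its previous step produced the bound $|h_j^N|\le L_j^{|s|}$ for $j\le k-1$ and, combined with Theorem~\ref{mainthmofpaper}(ii) for $k-1$, the pointwise convergence $h_j^N\to f_j$), we have $|f_j(s')|\le L_j^{|s'|}$ for every $j\le q-1\le k-1$. Since the number of twist/merge/split/deform operations on $s$ is polynomial in $|s|$ and $d$, and $|s'|\le|s|+4$ after any single operation (Lemmas~\ref{twist}--\ref{deform}), this gives $|E_q^N(s)|\le P(|s|)\,L_{q-1}^{|s|+4}$ for some polynomial $P$ whose coefficients depend on $d$ and $q$. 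To bound $|H(s)|$ from the equation above we then perform a secondary induction on $\iota(s)$: splittings strictly decrease $\iota$ (Lemma~\ref{iota2}), so $\sum_{\textup{split}\,s}H$ is controlled by this inductive hypothesis; twistings preserve or decrease $\iota$ (Lemma~\ref{twist}) and carry the small prefactor $\tfrac{1}{N-1}$; mergers and deformations can raise $\iota$ by at most $1$ and $4$ (Lemmas~\ref{merger}, \ref{deform}), but their coefficients $\tfrac{1}{N(N-1)}$ and $\tfrac{\beta N}{N-1}$, combined with the hypothesis $|\beta|\le\beta_0'(d,q)$, make those contributions subleading. The growth law $L_q=(36L_{q-1})^{4/3}$ is designed so that $P(|s|)\,L_{q-1}^{|s|+4}$ together with the $H$-terms on the right-hand side does not exceed $mL_q^{|s|}$ for all $|s|\ge 4$.

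The main technical obstacle will be that the equation for $H$ involves $H$ applied to loop sequences with both larger $\iota$ and larger $|s|$ (through mergers and deformations), so the secondary $\iota$-induction does not close directly. This should be resolved by choosing $\beta_0'(d,q)$ small enough that the deformation contribution becomes a strict contraction, allowing it to be moved to the left-hand side and absorbed into $mH(s)$, while the small base cases of the $\iota$-induction (e.g.\ $s$ a single plaquette) are treated separately through the master loop equation and the already-constructed functions $f_0,\ldots,f_{q-1}$.
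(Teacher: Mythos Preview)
Your proposal identifies the right inductive structure on $q$ and correctly sees that the core difficulty is closing a recursion for $H=H_{q,N}$ in which deformations and mergers can \emph{increase} $\iota$. However, your resolution of that obstacle does not work. Saying that for small $\beta$ the deformation term ``becomes a strict contraction, allowing it to be moved to the left-hand side and absorbed into $mH(s)$'' is not meaningful: the deformation term is $H$ evaluated at \emph{different} loop sequences $s'$ (with larger $\iota$ and larger $|s|$), so it cannot be absorbed into the left-hand side at $s$. A contraction-mapping heuristic would require an a priori bound on $H$ over the whole space of loop sequences, which is precisely what you are trying to prove. The same problem afflicts your treatment of mergers: the coefficient $\tfrac{1}{N(N-1)}$ is not small uniformly in $N$ (the lemma must hold for every $N$), and in any case you have no inductive control on $H(s')$ for $s'\in\mathbb{M}(s)$.

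The paper handles both issues differently. First, it derives a much cleaner recursion than yours: by expanding $\tfrac{N^r}{N-1}$ as a finite geometric sum plus remainder and then subtracting the analogous identity at level $q-1$, one obtains
\[
mH_{q,N}(s)=mH_{q-1,N}(s)+\sum_{\textup{twist}\,s}H_{q-1,N}+\sum_{\textup{merge}\,s}H_{q-2,N}+\sum_{\textup{split}\,s}H_{q,N}+\beta\sum_{\textup{deform}\,s}H_{q,N},
\]
with no $N$-dependent coefficients and no error term; twist and merge now involve only $H_{q-1,N}$ and $H_{q-2,N}$, which are bounded by $L_{q-1}^{|s|}$ from the induction on $q$. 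Second, and this is the idea you are missing, the split and deformation terms (which still involve $H_{q,N}$ at other arguments) are not handled by induction on $\iota(s)$ at all. Instead one introduces the degree-sequence supremum $D_N(\delta)=\sup_{\delta(s)\le\delta}|H_{q,N}(s)|$ and the generating function $F_N(\lambda)=\sum_{\delta\in\Delta^+}\lambda^{\iota(\delta)}D_N(\delta)$. A crude $N$-dependent a priori bound $|H_{q,N}(s)|\le 2NL_{q-1}^{|s|}$ (immediate from the induction hypothesis) shows $F_N(\lambda)<\infty$ for small $\lambda$. The recursion then yields an inequality of the form $F_N(\lambda)\le C+(4\lambda+4\lambda^3+4d|\beta|\lambda^{-4}+\cdots)F_N(\lambda)$, from which $F_N(\lambda)\le 1$ for the specific choice $\lambda=(36L_{q-1})^{-4/3}$ and $|\beta|$ small. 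The bound $|H_{q,N}(s)|\le\lambda^{-\iota(s)}\le L_q^{|s|}$ follows, and this is exactly where the formula $L_q=(36L_{q-1})^{4/3}$ comes from. Your sketch gives no mechanism that could produce a uniform-in-$N$ bound of this kind.
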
 
\begin{proof}
The proof is by induction on $q$. Let $H_{0, N}(s):=\phi_N(s)$ and let 
\begin{align*}
H_{q, N}(s):=N^q\biggl(\phi_N(s)-f_0(s)-\frac{1}{N}f_1(s)-\cdots-\frac{1}{N^{q-1}}f_{q-1}(s)\biggr)
\end{align*}
for $1\leq q \leq k$. Since $|W_{l}|\leq N$ for any loop $l$, $|H_{0, N}(s)|=|\phi_N(s)|\leq 1$. So the inequality \eqref{ineqHq} holds for $q=0$. Also note that since $\phi_N(\emptyset)=1$ for all $N$, $f_0(\emptyset)=1$ and $f_{j}(\emptyset)=0$ for all $j\geq 1$. For the rest of the proof we will assume that $s\neq \emptyset$. Let $q\geq 1$ and suppose that \eqref{ineqHq} holds for all $0\leq q'<q$. By using the identity
\begin{align}
\dfrac{N^r}{N-1}&=\dfrac{1}{N-1}+\sum_{i=0}^{r-1}N^{i} \label{triv}
\end{align}
that holds for any $r\geq 0$, we can write the equation \eqref{mlequns} as
\begin{align}
mN^q\phi_N(s)&=\biggl(\dfrac{1}{N-1}+\sum_{i=0}^{q-1}N^{i} \biggr)\sum_{\text{twist}\,s}\phi_N+\biggl(\dfrac{1}{N-1}+\sum_{i=0}^{q-2}N^{i}\biggr)\sum_{\text{merge}\,s}\phi_N\notag\\
&\quad+\biggl(\dfrac{1}{N-1}+\sum_{i=0}^{q}N^{i}\biggr)\sum_{\text{split}\,s}\phi_N+\beta\biggl(\dfrac{1}{N-1}+\sum_{i=0}^{q}N^{i}\biggr)\sum_{\text{deform}\,s}\phi_N\,. \label{expphi}
\end{align}
On the other hand, by equation \eqref{lform},
\begin{align}
mN^q\sum\limits_{i=0}^{q-1}\dfrac{1}{N^{i}}f_{i}(s)&=\sum\limits_{i=0}^{q-1}N^{q-i}\biggl(\sum_{\text{twist}\,s}\sum_{j=0}^{i-1}f_j+\sum_{\text{merge}\,s}\sum_{j=0}^{i-2}f_j+\sum_{\text{split}\,s}\sum_{j=0}^{i}f_j+\beta\sum_{\text{deform}\,s}\sum_{j=0}^{i}f_j\biggr)\,.\notag
\end{align}
By interchanging sums we get
\begin{align*}
\sum\limits_{i=0}^{q-1}N^{q-i}\sum_{\text{twist}\,s}\sum_{j=0}^{i-1}f_j&=\sum_{\text{twist}\,s}\sum_{j=0}^{q-2}f_j\sum_{i=j+1}^{q-1} N^{q-i}\\
&=\sum_{\text{twist}\,s}\sum_{j=0}^{q-2}f_j\sum_{i=j+1}^{q-1} N^{i-j}=\sum_{\text{twist}\,s}\sum_{i=1}^{q-1}N^i\sum_{j=0}^{i-1}\frac{1}{N^j}f_j\,.
\end{align*}
After doing this change of summation also for merger, splitting and deformation terms, we get
\begin{align}
mN^q\sum\limits_{i=0}^{q-1}\dfrac{1}{N^{i}}f_{i}(s)&=\sum_{\text{twist}\,s}\sum_{i=1}^{q-1}N^i\sum_{j=0}^{i-1}\frac{1}{N^j}f_j+\sum_{\text{merge}\,s}\sum_{i=1}^{q-2}N^i\sum_{j=0}^{i-2} \frac{1}{N^j}f_j\notag\\
&\quad +\sum_{\text{split}\,s}\sum_{i=1}^{q}N^i\sum_{j=0}^{i} \frac{1}{N^j}f_j+\beta\sum_{\text{deform}\,s}\sum_{i=1}^{q}N^i\sum_{j=0}^{i}\frac{1}{N^j}f_j\,.\label{expf}
\end{align}
The left side of \eqref{expf} subtracted from the left side of  \eqref{expphi} gives
\[
mN^q\biggl(\phi_N(s)-\sum\limits_{i=0}^{q-1}\dfrac{1}{N^{i}}f_{i}(s)\biggr)=mH_{q, N}(s)\,.
\]
On the other hand, we get four terms when the right side of~\eqref{expf} is subtracted from the right side of~\eqref{expphi}, corresponding to twisting, merger, splitting and deformation. The twisting term simplifies~as:
\begin{align*}
\sum_{\text{twist}\,s}\biggl(\dfrac{1}{N-1}\phi_N+\sum_{i=0}^{q-1}N^{i} \biggl(\phi_N-\sum_{j=0}^{i-1}\frac{1}{N^j}f_j\biggr)\biggr)&=\sum_{\text{twist}\,s}\biggl(\sum_{i=0}^{q-1}H_{i,N}+\dfrac{1}{N-1}\phi_N\biggr)
\end{align*}
Similarly simplifying the other terms, we finally get 
\begin{align}
mH_{q,N}(s)&=\sum_{\text{twist}\,s}\biggl(\sum_{i=0}^{q-1}H_{i,N}+\dfrac{1}{N-1}\phi_N\biggr)+\sum_{\text{merge}\,s}\biggl(\sum_{i=0}^{q-2}H_{i,N}+\dfrac{1}{N-1}\phi_N\biggr)\notag\\
&\qquad+\sum_{\text{split}\,s}\biggl(\sum_{i=0}^{q}H_{i,N}+\dfrac{1}{N-1}\phi_N\biggr)+\beta\sum_{\text{deform}\,s}\biggl(\sum_{i=0}^{q}H_{i,N}+\dfrac{1}{N-1}\phi_N\biggr)\,. \notag
\end{align}
By writing the above equation also for $q-1$ we get
\begin{align}
mH_{q,N}(s)&=mH_{q-1,N}(s)+\sum_{\text{twist}\,s}H_{q-1,N}+\sum_{\text{merge}\,s}H_{q-2,N}+\sum_{\text{split}\,s}H_{q,N}+\beta\sum_{\text{deform}\,s}H_{q,N}\, ,\label{equationhk}
\end{align}
where $H_{-1}(s)$ is assumed to be $\phi_N(s)/N$ when $q=1$. By induction hypothesis, $|H_{q',N}(s)|\leq L_{q'}^{|s|} \leq L_{q-1}^{|s|}$ for all $0\leq q'<q$ and all $s$. Note that $H_{-1}(s)$ is also bounded by $L_{q-1}^{|s|}$. Therefore, if we let $\ell(s):=L_{q-1}^{|s|}$, then by the triangle inequality and equation \eqref{equationhk},
\begin{align}
m|H_{q,N}(s)|&\leq m\ell(s)+\sideset{}{^+}\sum_{\text{twist}\,s}\ell+\sideset{}{^+}\sum_{\text{merge}\,s}\ell+\sideset{}{^+}\sum_{\text{split}\,s}|H_{q,N}|+|\beta|\sideset{}{^+}\sum_{\text{deform}\,s}|H_{q,N}|\,. \label{ineq1}
\end{align}
We will first compute upper bounds for the second and third terms on the right. By Lemma \ref{twist}, if $s' \in \ftw(s)$, then $|s'|\leq |s|$ and therefore $\ell(s')\leq \ell(s)$. Since there can be at most $m^2$ twistings of $l_1$ using edges $e$ and $e^{-1}$, we get
\begin{align*}
&\sideset{}{^+}\sum_{\text{twist}\,s}\ell \leq  2m^2\ell(s) \leq 2m|s|L_{q-1}^{|s|}\,.
\end{align*}
By Lemma~\ref{merger}, if $s'\in \fst(s)$, then $|s'|\leq |s|$ and so $\ell(s')\leq \ell(s)$. If $s=(l_1, \ldots, l_n)$ is the minimal representation of $s$, then for any $2\leq i\leq n$, the loop $l_i$ can be merged to $l_1$ at location $e$ or $e^{-1}$ in at most $m|l_i|$ ways. So total number of mergers of $l_1$ with other loops in $s$ at location $e$ or $e^{-1}$ is no more than $m|s|$. Hence
\begin{align*}
\sideset{}{^+}\sum_{\text{merge}\,s}\ell \leq m|s|\ell(s)=m|s|L_{q-1}^{|s|}\,.
\end{align*}
By combining the last two inequalities with inequality \eqref{ineq1}, we get
\begin{align}
|H_{q,N}(s)|&\leq 4|s|L_{q-1}^{|s|}+\dfrac{1}{m}\sideset{}{^+}\sum_{\text{split}\,s}|H_{q,N}|+\dfrac{|\beta|}{m}\sideset{}{^+}\sum_{\text{deform}\,s}|H_{q,N}|\,. \label{ineqhq}
\end{align}
Let $\Delta$ be the set of all finite sequences of integers, including the null sequence, and $\Delta^+$ be its subset consisting of all sequences whose components are all $\geq 4$. Given two non-null elements $\delta = (\delta_1,\ldots, \delta_n)\in \Delta$ and $\delta' = (\delta_1',\ldots, \delta_m')\in \Delta$, we will say that $\delta \le \delta'$ if $m=n$ and $\delta_i\le\delta_i'$ for each~$i$. If $(l_1, \ldots, l_n)$ is the minimal representation of $s$, define $\delta(s):=(|l_1|, \ldots, |l_n|)$ to be the degree sequence of $s$.   Given a vector $\delta = (\delta_1,\ldots, \delta_n)\in \Delta$, define
\[
|\delta| := \sum_{i=1}^n \delta_i\, , \ \ \#\delta := n\, , \ \ \text{and} \ \ \iota(\delta) := |\delta|-\#\delta\, .
\]
All of the above quantities are defined to be zero for the empty sequence. Note that $\iota(s)=\iota(\delta(s))$. Define
$$D_N(\delta):=\sup\limits_{s \in \cs\, : \, \delta(s)\leq \delta}|H_{q,N}(s)|$$
for $\delta \in \Delta^+$ and let $D_N(\delta)=0$ for $\delta\in \Delta\setminus \Delta^+$. For $\lambda\in (0, 1)$, let
\[
F_N(\lambda) := \sum_{\delta\in \Delta^+} \lambda^{\iota(\delta)} D_N(\delta) =  \sum_{\delta\in \Delta} \lambda^{\iota(\delta)} D_N(\delta)\, .
\]
We claim that $F_N(\lambda)<\infty$ for all sufficiently small $\lambda$. To prove this, observe that by the induction hypothesis stated at the beginning of the proof of the lemma,
\begin{align*}
|H_{q,N}(s)|&=|NH_{q-1,N}(s)-Nf_{q-1}(s)|\leq 2NL_{q-1}^{|s|}\,,
\end{align*}
which implies that $D_N(\delta)\leq 2NL_{q-1}^{|\delta|}$. Note that the number of $\delta \in \Delta^{+}$ with $|\delta|=r$ and $\#\delta=n$ is less than or equal to $\binom{r}{n-1}$, and if $\delta$ is a such sequence, then $r\geq 4n$. Therefore
\begin{align}
F_N(\lambda)&\leq 2N\sum_{\delta\in \Delta^+} \lambda^{\iota(\delta)}L_{q-1}^{|\delta|}\nonumber\\
&=2N\sum_{r=1}^\infty \sum_{n=1}^r\sum_{\substack{\delta\in \Delta^+\, : \, |\delta|= r,\nonumber\\
\#\delta = n}} \lambda^{r-n} L_{q-1}^{r}\nonumber\\
&\leq 2N\sum_{r=1}^\infty \sum_{n=1}^r \lambda^{3r/4} L_{q-1}^{r}\binom{r}{n-1}\leq 2N\sum_{r=1}^\infty \lambda^{3r/4} L_{q-1}^{r}2^r\,.\label{fnlambda}
\end{align}
This shows that $F_N(\lambda)<\infty$ for all $\lambda<(2L_{q-1})^{-4/3}$.

Take some $\delta=(\delta_1, \ldots, \delta_n)\in \Delta^+$ and let $s$ be a non-null loop sequence with minimal representation $s=(l_1, \ldots, l_n)$, such that $\delta(s)\leq \delta$. Recall that
\begin{align}
\dfrac{1}{m}\sideset{}{^+}\sum_{\text{split}\,s}|H_{q,N}|&=\dfrac{1}{m}\sum_{x\in A_1, \, y\in B_1} |H_{q,N}(\times_{x,y}^1 l_1,\times_{x,y}^2 l_1, l_2, \ldots,  l_n)|\nonumber\\
&\quad + \dfrac{1}{m}\sum_{x\in B_1, \, y\in A_1} |H_{q,N}(\times_{x,y}^1 l_1,\times_{x,y}^2 l_1, l_2, \ldots,  l_n)|\nonumber\\
&\quad  +\dfrac{1}{m}\sum_{\substack{x,y\in A_1\\ x\ne y}} |H_{q,N}(\times^1_{x,y} l_1,\times^2_{x,y} l_1, l_2, \ldots, l_n)|\nonumber\\
&\quad  +\dfrac{1}{m}\sum_{\substack{x,y\in B_1\\ x\ne y}} |H_{q,N}(\times^1_{x,y} l_1,\times^2_{x,y} l_1, l_2, \ldots, l_n)|\, .\label{tt0}
\end{align}
By Lemma \ref{split1},
\begin{align}
&\frac{1}{m}\sum_{\substack{x,y\in A_1\\x\ne y}} |H_{q,N}(\times_{x,y}^1 l_1, \times^2_{x,y} l_1, l_2,\ldots, l_n)|\nonumber\\
&\le \frac{1}{m}\sum_{x\in A_1} \sum_{y\in A_1\backslash\{x\}} D_N(\delta_1-|y-x|, |y-x|, \delta_2,\ldots,\delta_n)\nonumber\\
&\le \frac{2}{m}\sum_{x\in A_1} \sum_{k=1}^{\infty} D_N(\delta_1-k, k, \delta_2,\ldots,\delta_n)\nonumber\\
&\le 2\sum_{k=1}^{\infty} D_N(\delta_1-k, k, \delta_2,\ldots,\delta_n)\, .\label{tt1}
\end{align}
The same inequality holds when $A_1$ is replaced with $B_1$. By Lemma  \ref{split2},
\begin{align}
&\frac{1}{m}\sum_{\substack{x\in A_1, \, y\in B_1}} |H_{q,N}(\times_{x,y}^1 l_1, \times^2_{x,y} l_1, l_2,\ldots, l_n)|\nonumber\\
&\le \frac{1}{m}\sum_{x\in A_1} \sum_{y\in B_1} D_N(\delta_1-|x-y|-1, |x-y|-1, \delta_2,\ldots,\delta_n)\nonumber\\
&\le \frac{2}{m}\sum_{x\in A_1} \sum_{k=1}^{\infty} D_N(\delta_1-k-1, k-1, \delta_2,\ldots,\delta_n)\nonumber\\
&\le 2\sum_{k=1}^{\infty} D_N(\delta_1-k-1, k-1, \delta_2,\ldots,\delta_n)\, .\label{tt2}
\end{align}
Again, the same estimate is valid if $A_1$ and $B_1$ are swapped. By using the estimates from \eqref{tt1} and \eqref{tt2} in \eqref{tt0}, we get
\begin{align}
\dfrac{1}{m}\sideset{}{^+}\sum_{\text{split}\,s}|H_{q,N}|&\leq 4\sum_{k=1}^{\infty} D_N(\delta_1-k, k, \delta_2,\ldots,\delta_n)+ 4\sum_{k=1}^{\infty} D_N(\delta_1-k-1, k-1, \delta_2,\ldots,\delta_n)\,. \label{tt3}
\end{align}
Next, recall that
\begin{align*}
\dfrac{1}{m}\sideset{}{^+}\sum_{\text{deform}\,s}|H_{q,N}(s)|&=\dfrac{1}{m}\sum_{p\in \cp^+(e)}\sum_{x\in C_1}|H_q(l_1 \ominus_{x}p, l_2, \ldots,l_n)|\\
&\quad   + \dfrac{1}{m}\sum_{p\in \cp^+(e)}\sum_{x\in C_1}|H_{q,N}(l_1 \oplus_{x}p, l_2, \ldots, l_n)| \, .
\end{align*}
Note that $|l_1 \ominus_{x}p|\leq |l_1|+4$ for any $x\in C_1$ and $p\in  \cp^+(e)$. So, if $l_1 \ominus_{x}p$ is non-null, then
\[
|H_{q,N}(l_1 \ominus_{x}p, l_2, \ldots,l_n)| \leq D_N(\delta_1+4, \delta_2, \ldots, \delta_n)
\]
and if $l_1 \ominus_{x}p$ is a null loop and $n\neq 1$, then
\[
|H_{q,N}(l_1 \ominus_{x}p, l_2, \ldots,l_n)| \leq D_N(\delta_2, \ldots, \delta_n)\,.
\]
Note that even if $n=1$ and $l_1 \ominus_{x}p$ is a null loop then both sides of above inequality are 0. Hence
\[
|H_{q,N}(l_1 \ominus_{x}p, l_2, \ldots,l_n)| \leq D_N(\delta_1+4, \delta_2, \ldots, \delta_n)+ D_N(\delta_2, \ldots, \delta_n)\,.
\]
The same bound also holds for $|H_{q,N}(l_1 \oplus_{x}p,\ldots,l_n)|$. Since $|\cp^{+}(e)|\leq 2(d-1)$, we get
\begin{align}
\dfrac{1}{m}\sideset{}{^+}\sum_{\text{deform}\,s}|H_{q,N}|&\leq 4dD_N(\delta_1+4, \delta_2, \ldots, \delta_n)+ 4dD_N(\delta_2, \ldots, \delta_n)\,.\label{tt4}
\end{align}
For $k\geq 1$, define
\begin{align*}
&\theta_k(\delta_1, \ldots, \delta_n):=(\delta_1-k, k, \delta_2, \ldots, \delta_n)\, ,\\
&\eta_k (\delta_1, \ldots, \delta_n):=(\delta_1-k-1, k-1, \delta_2, \ldots, \delta_n)\,.
\end{align*}
Moreover, let
\begin{align*}
&\alpha(\delta_1, \ldots, \delta_n):=(\delta_1+4,\delta_2, \ldots, \delta_n)\, ,\\
&\gamma(\delta_1, \ldots, \delta_n):=(\delta_2, \ldots, \delta_n)\,,
\end{align*}
with $\gamma(\delta_1)=\emptyset$. Then by \eqref{ineqhq}, \eqref{tt3} and \eqref{tt4},
\begin{align*}
|H_{q,N}(s)|&\leq 4|s|L_{q-1}^{|s|}+4\sum_{k=1}^{\infty}D_N(\theta_k(\delta))+4\sum_{k=1}^{\infty}D_N(\eta_k(\delta))\\
&\qquad  +4d|\beta|D_N(\alpha(\delta))+ 4d|\beta|D_N(\gamma(\delta))\,.
\end{align*}
Since this holds for all $s$ such that $\delta(s)\le \delta$, we can take supremum over all such $s$ and get
\begin{align*}
D_N(\delta)&\leq 4|\delta|L_{q-1}^{|\delta|}+4\sum_{k=1}^{\infty}D_N(\theta_k(\delta))+4\sum_{k=1}^{\infty}D(\eta_k(\delta))\\
&\qquad  +4d|\beta|D_N(\alpha(\delta))+ 4d|\beta|D_N(\gamma(\delta))\,.
\end{align*}
Therefore
\begin{align}
F_N(\lambda)&\leq 4\sum_{\delta\in \Delta^+}\lambda^{\iota(\delta)} |\delta|L_{q-1}^{|\delta|}\notag+\sum_{\delta\in \Delta^+}\lambda^{\iota(\delta)}\biggl(4\sum_{k=1}^{\infty}D_N(\theta_k(\delta))+4\sum_{k=1}^{\infty}D_N(\eta_k(\delta))\nonumber\\
&\qquad \qquad \qquad +4d|\beta|D_N(\alpha(\delta))+ 4d|\beta|D_N(\gamma(\delta))\biggr)\,. \label{mm0}
\end{align}
Proceeding as in the derivation of \eqref{fnlambda}, we get, for $\lambda<(4L_{q-1})^{-4/3}$,
\begin{align}
\sum_{\delta\in \Delta^+} \lambda^{\iota(\delta)}|\delta|L_{q-1}^{|\delta|}&=\sum_{r=1}^\infty \sum_{n=1}^r\sum_{\substack{\delta\in \Delta^+\, : \, |\delta|= r,\\
\#\delta = n}} \lambda^{r-n} r L_{q-1}^{r} \notag \\
&\leq \sum_{r=1}^\infty \sum_{n=1}^r \lambda^{3r/4} r L_{q-1}^{r} \binom{r}{n-1}\notag\\
&\leq \sum_{r=1}^\infty  \lambda^{3r/4} L_{q-1}^{r}2^{2r}=\dfrac{4\lambda^{3/4}L_{q-1}}{1-4\lambda^{3/4}L_{q-1}}\,. \label{mm1}
\end{align}
Next, observe that maps $\theta_1, \theta_2, \ldots$ are injective and their ranges are disjoint since the second component of $\theta_k$ is equal to $k$. Since $\iota(\theta_k(\delta))=\iota(\delta)-1$, this shows that
\begin{align}
\sum_{\delta\in \Delta^+} \sum_{k=1}^{\infty}\lambda^{\iota(\delta)}D_N(\theta_k(\delta))&= \sum_{\delta\in \Delta^+} \sum_{k=1}^{\infty}\lambda^{\iota(\theta_k(\delta))+1}D_N(\theta_k(\delta)) \notag \\
&\leq \sum_{\delta\in \Delta} \lambda^{\iota(\delta)+1}D_N(\delta)=\lambda F_N(\lambda)\,. \label{mm2}
\end{align}
Similarly, maps $\eta_1, \eta_2, \ldots$ are also injective and their ranges are disjoint. Since $\iota(\eta_k(\delta))=\iota(\delta)-3$, we get
\begin{align}
\sum_{\delta\in \Delta^+} \sum_{k=1}^{\infty}\lambda^{\iota(\delta)}D_N(\eta_k(\delta))&= \sum_{\delta\in \Delta^+} \sum_{k=1}^{\infty}\lambda^{\iota(\eta_k(\delta))+3}D_N(\eta_k(\delta)) \notag \\
&\leq \sum_{\delta\in \Delta} \lambda^{\iota(\delta)+3}D_N(\delta)=\lambda^3 F_N(\lambda)\,. \label{mm3}
\end{align}
The map $\alpha$ is injective and $\iota(\alpha(\delta))=\iota(\delta)+4$. Therefore
\begin{align}
\sum_{\delta\in \Delta^+} \lambda^{\iota(\delta)}D_N(\alpha(\delta))&= \sum_{\delta\in \Delta^+} \lambda^{\iota(\alpha(\delta))-4}D_N(\alpha(\delta)) \notag \\
&\leq \sum_{\delta\in \Delta} \lambda^{\iota(\delta)-4}D_N(\delta)=\lambda^{-4} F_N(\lambda) \label{mm4}
\end{align}
Finally, observe that $\gamma(\delta_1, \ldots, \delta_n) \in \Delta^+ $ for any $(\delta_1, \ldots, \delta_n)\in \Delta^+$ and
$$\gamma^{-1}(\delta_1, \ldots, \delta_n) \subseteq \{(k, \delta_1, \ldots, \delta_n): k\geq 4\}\,.$$
Thus,
\begin{align}
\sum_{\delta\in \Delta^+} \lambda^{\iota(\delta)}D_N(\gamma(\delta))&=\sum_{\delta\in \Delta^+} \lambda^{\iota(\gamma(\delta))+\delta_1-1}D_N(\gamma(\delta)) \notag \\
&\leq \sum_{\delta'\in \Delta^{+}}\sum_{k=1}^{\infty} \lambda^{\iota(\delta')+k-1}D_N(\delta')=\dfrac{F_N(\lambda)}{1-\lambda}\label{mm5}
\end{align}
Combining the inequalities \eqref{mm1}, \eqref{mm2}, \eqref{mm3}, \eqref{mm4}, \eqref{mm5} with \eqref{mm0}, we get
\begin{align*}
F_N(\lambda) &\leq \dfrac{16\lambda^{3/4}L_{q-1}}{1-4\lambda^{3/4}L_{q-1}}+\biggl(4\lambda^3+4\lambda+\dfrac{4|\beta|d}{\lambda^4}+\dfrac{4|\beta|d}{1-\lambda}\biggr)F_N(\lambda)\,.
\end{align*}
If we let $\lambda=(36L_{q-1})^{-4/3}$, then $4\lambda^3+4\lambda<1/4$ and 
\[
\dfrac{16\lambda^{3/4}L_{q-1}}{1-4\lambda^{3/4}L_{q-1}}=\frac{1}{2}\, .
\]
By choosing $|\beta|$ so that
\[
\dfrac{4|\beta|d}{\lambda^4}+\dfrac{4|\beta|d}{1-\lambda}<\dfrac{1}{4}\, ,
\]
we get $F_N(\lambda)\leq 1$. Therefore
\begin{align*}
|H_{q,N}(s)|\leq |D_N(\delta(s))|&\leq \lambda^{-\iota(\delta(s))}\leq (36L_{q-1})^{4|s|/3}=L_q^{|s|}\,,
\end{align*}
which finishes the proof of the lemma.
\end{proof}
The main consequence of Lemma \ref{boundHq} is that there is a subsequence of $N$'s through which the limit of $H_{k,N}(s)$ 
exists for all $s$. This is easy to prove using the bound given in Lemma \ref{boundHq} and a standard diagonal argument. Let $f_k(s)$ denote such a subsequential limit. By the induction hypothesis, 
\[
f_{k'}(s) = \lim_{N\ra \infty} H_{k',N}(s)
\]
for each $k'<k$ and $s\in \cs$. So by equation \eqref{equationhk}, we get 
\begin{align}
mf_{k}(s) &= mf_{k-1}(s)+\sum_{\text{twist }s} f_{k-1}+ \sum_{\text{merge }s}f_{k-2}+\sum_{\text{split }s}f_{k}+ \beta \sum_{\text{deform }s}f_{k}\, ,\notag
\end{align}
which is exactly equation \eqref{limequns}. 
Thus, any subsequential limit $f_k$ satisfies equation \eqref{limequns}. The convergence of $H_{k,N}$ would be proved, therefore, if we can show that there can be at most one function that satisfies \eqref{limequns}. For $k=0$, this  uniqueness was proved for sufficiently small $\beta$ in the following result from \cite{chatterjee15}. 
\begin{thm}\label{uniquef0uns}
Given any $L\ge 1$, there exists $\beta_1(L,d)>0$ such that if $|\beta| \le\beta_1(L,d)$, then there is a unique function $f_0:\cs \ra \rr$ such that \textup{(a)} $f_0(\emptyset)=1$, \textup{(b)} $|f_0(s)|\le L^{|s|}$ for all $s$,  and \textup{(c)} $f_0$ satisfies the master loop equation \eqref{mastereqf0uns}. 
\end{thm}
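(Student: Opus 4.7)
The plan is to run a generating-function / weighted-sup-norm argument on the \emph{difference} of any two putative solutions, exactly parallel to the argument used in Lemma~\ref{boundHq}, but in its homogeneous incarnation. Suppose $f_0$ and $g_0$ both satisfy (a), (b), (c), and set $h:=f_0-g_0$. Then $h(\emptyset)=0$, $|h(s)|\le 2L^{|s|}$ for every $s$, and by linearity of \eqref{mastereqf0uns},
\[
m\,h(s) \;=\; \sum_{\textup{split}\,s} h \;+\; \beta\sum_{\textup{deform}\,s} h.
\]
The goal is to show that this homogeneous equation, together with the a priori bound $|h(s)|\le 2L^{|s|}$, forces $h\equiv 0$ whenever $|\beta|$ is small enough (depending on $L$ and $d$).

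First, imitating the proof of Lemma~\ref{boundHq}, define
\[
D(\delta):=\sup\{|h(s)|\,:\, s\in\cs,\ \delta(s)\le \delta\}
\]
for $\delta\in\Delta^+$ (and zero otherwise), and
\[
F(\lambda):=\sum_{\delta\in \Delta^+}\lambda^{\iota(\delta)}D(\delta).
\]
The a priori bound $D(\delta)\le 2L^{|\delta|}$ together with the same combinatorial count on $\Delta^+$ used in \eqref{fnlambda} shows that $F(\lambda)<\infty$ for every $\lambda<(2L)^{-4/3}$.

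Second, I would apply the triangle inequality to the homogeneous master loop equation and then reuse verbatim the splitting and deformation bounds derived in \eqref{tt3} and \eqref{tt4}: using Lemmas~\ref{split1} and \ref{split2} for splitting and the plaquette count $|\cp^+(e)|\le 2(d-1)$ for deformation, one gets
\[
D(\delta)\le 4\sum_{k=1}^\infty D(\theta_k(\delta))+4\sum_{k=1}^\infty D(\eta_k(\delta))+4d|\beta|D(\alpha(\delta))+4d|\beta|D(\gamma(\delta)),
\]
with $\theta_k,\eta_k,\alpha,\gamma$ as defined in the proof of Lemma~\ref{boundHq}. Multiplying by $\lambda^{\iota(\delta)}$, summing over $\delta\in\Delta^+$, and invoking the injectivity/index-shift identities \eqref{mm2}--\eqref{mm5}, the source term $4|\delta|L_{q-1}^{|\delta|}$ that was present in \eqref{mm0} is now \emph{absent}, and one obtains
\[
F(\lambda)\;\le\;\bigl(4\lambda+4\lambda^3\bigr)F(\lambda)+4d|\beta|\Bigl(\lambda^{-4}+\tfrac{1}{1-\lambda}\Bigr)F(\lambda).
\]

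Third, I would fix $\lambda$ small enough that $4\lambda+4\lambda^3<1/2$ and $\lambda<(2L)^{-4/3}$ (both are satisfied by $\lambda:=c\min(1,L^{-4/3})$ for a suitable absolute constant $c$), and then choose $\beta_1(L,d)>0$ so small that $4d|\beta|(\lambda^{-4}+(1-\lambda)^{-1})<1/4$ whenever $|\beta|\le\beta_1(L,d)$. Since $F(\lambda)<\infty$, the inequality above then forces $\tfrac{1}{4}F(\lambda)\le 0$, hence $F(\lambda)=0$; as $\lambda>0$ and all summands are nonnegative, $D(\delta)=0$ for every $\delta\in\Delta^+$, so $h\equiv 0$ on non-null loop sequences, and $h(\emptyset)=0$ is built in. This gives $f_0=g_0$.

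The only real obstacle is choosing $\lambda$ in the right window: it must be simultaneously small enough for the split-contraction factor $4\lambda+4\lambda^3$ to be $<1/2$ \emph{and} for $F(\lambda)<\infty$ via $\lambda<(2L)^{-4/3}$. Both constraints are of the form ``$\lambda$ small'', and once $\lambda$ is fixed in terms of $L$, the remaining dependence $\lambda^{-4}$ in the deformation coefficient is harmless because $|\beta|$ can be shrunk to swallow it---this is exactly how $\beta_1(L,d)$ ends up depending on both $L$ and $d$.
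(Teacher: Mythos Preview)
Your proof is correct. Note, however, that the paper does not actually prove this theorem: it is quoted without proof from \cite{chatterjee15}, so there is no in-paper argument to compare against. Your approach---taking the difference $h=f_0-g_0$, observing that it satisfies the homogeneous version of \eqref{mastereqf0uns}, and then running the generating-function machinery of Lemma~\ref{boundHq} with the source term removed---is exactly the natural adaptation of the technique developed in this paper (and presumably in \cite{chatterjee15} as well), and it goes through cleanly. The key point you correctly identify is that once the inhomogeneous term $4|\delta|L_{q-1}^{|\delta|}$ is absent, the inequality $F(\lambda)\le c(\lambda,\beta)F(\lambda)$ with $c<1$ and $F(\lambda)<\infty$ forces $F(\lambda)=0$.
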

The following theorem proves the uniqueness result for aribtrary positive $k$. The proof makes use of Theorem \ref{uniquef0uns}.
\begin{thm}\label{uniquefk}
Given any $k\ge 1$ and $L\ge 1$, there exists $\beta_1(L,d,k)>0$ such that if $|\beta| \le\beta_1(L,d,k)$, then there is a unique function $f_k:\cs \ra \rr$ such that \textup{(a)} $f_k(\emptyset)=0$, \textup{(b)} $|f_k(s)|\le L^{|s|}$ for all $s$,  and \textup{(c)} $f_k$ satisfies the loop equation \eqref{limequns}.
\end{thm}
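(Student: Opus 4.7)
The plan is to subtract two candidate solutions, reduce to a homogeneous version of the loop equation \eqref{limequns}, and then rerun the generating-function argument from the proof of Lemma \ref{boundHq} with the lower-order source terms turned off. Concretely, suppose $f_k$ and $g_k$ both satisfy (a)--(c) for the same $f_{k-1}$ and $f_{k-2}$, and let $h := f_k - g_k$. The terms on the right-hand side of \eqref{limequns} that involve $f_{k-1}$ and $f_{k-2}$ are identical in the equations for $f_k$ and $g_k$, so upon subtraction they cancel and $h$ satisfies
\begin{equation*}
mh(s) \ =\ \sum_{\textup{split}\,s} h \ +\ \beta \sum_{\textup{deform}\,s} h
\end{equation*}
together with $h(\emptyset)=0$ and $|h(s)|\le 2L^{|s|}$ for every $s\in\cs$. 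It therefore suffices to show that the only function satisfying these three conditions is $h\equiv 0$.

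To establish this, I would define
\begin{equation*}
D(\delta) \ :=\ \sup\{|h(s)| : s\in\cs,\ \delta(s)\le\delta\}\quad (\delta\in\Delta^+)\, ,\qquad F(\lambda) \ :=\ \sum_{\delta\in\Delta}\lambda^{\iota(\delta)} D(\delta)\, ,
\end{equation*}
with $D(\delta):=0$ for $\delta\notin\Delta^+$. The bound $|h(s)|\le 2L^{|s|}$ together with the counting estimate carried out in \eqref{fnlambda} shows that $F(\lambda)<\infty$ for all sufficiently small $\lambda>0$ (depending only on $L$). Applying the homogeneous equation exactly in the manner that led to \eqref{ineqhq}---noting that the ``$4|s|L_{q-1}^{|s|}$'' inhomogeneous contribution appearing there arose solely from bounding the twist, merger and inaction sums, which are absent here---together with the estimates \eqref{tt3} and \eqref{tt4} gives
\begin{equation*}
D(\delta) \ \le\ 4\sum_{j=1}^{\infty} D(\theta_j(\delta)) \ +\ 4\sum_{j=1}^{\infty} D(\eta_j(\delta)) \ +\ 4d|\beta|\, D(\alpha(\delta)) \ +\ 4d|\beta|\, D(\gamma(\delta))\, .
\end{equation*}
Multiplying by $\lambda^{\iota(\delta)}$, summing over $\delta\in\Delta^+$, and invoking the index-shift identities \eqref{mm2}--\eqref{mm5} then yields
\begin{equation*}
F(\lambda) \ \le\ \biggl(4\lambda \ +\ 4\lambda^3 \ +\ \frac{4d|\beta|}{\lambda^4} \ +\ \frac{4d|\beta|}{1-\lambda}\biggr) F(\lambda)\, .
\end{equation*}

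Now fix $\lambda$ depending only on $L$ (for instance $\lambda := (36L)^{-4/3}$, as in the proof of Lemma \ref{boundHq}) so that $F(\lambda)<\infty$ and $4\lambda+4\lambda^3\le 1/2$, and then choose $\beta_1(L,d,k)>0$ small enough that $|\beta|\le\beta_1(L,d,k)$ forces $4d|\beta|/\lambda^4+4d|\beta|/(1-\lambda)\le 1/4$. The coefficient multiplying $F(\lambda)$ on the right is then at most $3/4<1$, and combined with $F(\lambda)<\infty$ this forces $F(\lambda)=0$. Since every term $\lambda^{\iota(\delta)} D(\delta)$ in the defining series is non-negative, we conclude $D(\delta)=0$ for every $\delta\in\Delta^+$, and hence $h\equiv 0$.

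The main obstacle is purely bookkeeping: one must verify that the splitting, deformation, twisting and merger bounds in the proof of Lemma \ref{boundHq} depend only on structural properties---the injectivity of $\theta_j,\eta_j,\alpha$, the index shifts $\iota(\theta_j(\delta))=\iota(\delta)-1$, $\iota(\eta_j(\delta))=\iota(\delta)-3$, $\iota(\alpha(\delta))=\iota(\delta)+4$, and the inclusion $\gamma^{-1}(\delta)\subseteq\{(j,\delta):j\ge 4\}$---and that the sole source of the inhomogeneous $4|s|L_{q-1}^{|s|}$ term there was the twist/merger/inaction terms, which vanish under subtraction. No new analytic input is required; the argument is a direct specialization of the one already used for Lemma \ref{boundHq} (and a generalization, to positive $k$, of the argument behind Theorem \ref{uniquef0uns}).
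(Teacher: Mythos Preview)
Your proof is correct, but it proceeds differently from the paper's. After forming the difference $h=f_k-g_k$ and observing that $h$ satisfies the homogeneous equation $mh(s)=\sum_{\textup{split}\,s}h+\beta\sum_{\textup{deform}\,s}h$ with $h(\emptyset)=0$, the paper does not rerun the generating-function machinery. Instead it sets $g:=h+f_0$, checks that $g$ satisfies the very same equation \eqref{mastereqf0uns} (since $f_0$ does), verifies $g(\emptyset)=1$ and $|g(s)|\le(2L+1)^{|s|}$, and then invokes the already-established Theorem~\ref{uniquef0uns} to conclude $g=f_0$, i.e.\ $h\equiv 0$. This reduces the entire argument to three lines.

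Your route reproves, in effect, the content of Theorem~\ref{uniquef0uns} from scratch by specializing the $F(\lambda)$ argument of Lemma~\ref{boundHq} to the homogeneous setting. That is perfectly valid---all the structural ingredients you cite (injectivity of $\theta_j,\eta_j,\alpha$, the index shifts, the control on $\gamma^{-1}$) are independent of the lower-order source terms---and it has the virtue of being self-contained, since Theorem~\ref{uniquef0uns} is merely quoted from \cite{chatterjee15}. The paper's approach is shorter and cleaner precisely because it packages that analysis as a black box; yours makes the mechanism explicit at the cost of repeating it.
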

\begin{proof}
We have already proved the existence of $f_k$ when $|\beta|$ is sufficiently small. Suppose that $f_k$ and $\tf_k$ both satisfy conditions (a), (b) and (c). Consider the function 
$$g(s):=f_k(s)-\tf_k(s)+f_0(s)\,.$$
By taking the difference of equation \eqref{limequns} for $f_k$ and $\tf_k$ and adding the master loop equation \eqref{mastereqf0uns} for $f_0$, we see that $g$ also satisfies \eqref{mastereqf0uns}. Moreover $g(\emptyset)=1$ and if $s\neq \emptyset$, then by the triangle inequality (observing that $|f_0(s)|=\lim_{N\ra\infty} |\phi_N(s)|\le 1$),  we get
$$g(s)\leq L^{|s|}+L^{|s|}+1\leq (2L+1)^{|s|}\,.$$ 
So by Theorem~\ref{uniquef0uns} we get that $g(s)=f_0(s)$ for all sufficiently small $\beta$. This is the same as saying that $f_k(s)=\tf_k(s)$.
\end{proof}
Theorem \ref{uniquefk}, together with the existence of $f_k$ that was proved earlier, completes the proof of Theorem \ref{eqfuns}. 
\section{The symmetrized master loop equation}
Theorem \ref{eqfuns} tells us that for each $k\ge 0$, there exists a function  $f_{k}: \cs \to  \rr$ and a positive constant $\beta_0(d,k)$ such  that when $|\beta|\le \beta_0(d,k)$,
\begin{align*}
f_k(s)=\lim\limits_{N\to \infty}N^k\biggl(\phi_N(s)-f_0(s)-\frac{1}{N}f_1(s)-\cdots-\frac{1}{N^{k-1}}f_{k-1}(s)\biggr) 
\end{align*}
for every $s\in \cs$. Moreover, it says that $f_k$ satisfies the master loop equation \eqref{limequns}. Observe that the equation \eqref{limequns} depends on the quantity $m$, which is related to the first edge $e$ of the first loop $l_1$ in the minimal representation of a loop sequence $s$. Since the choice of the first edge is made by an arbitrary rule, there is an unpleasant asymmetry in \eqref{limequns}. This asymmetry is removed by the following result. Recall the definition of $\ftw^+$, $\ftw^-$, $\fs^+$, $\fs^-$, $\fst^+$, $\fst^-$, $\fd^+$ and $\fd^-$ from Section \ref{stringsec}.
\begin{thm}\label{eqf}
The function $f_k$ of Theorem \ref{eqfuns} satisfies the symmetrized master loop equation
\begin{align}
|s|f_k(s) &= |s|f_{k-1}(s)+\sum_{s'\in \ftw^-(s)} f_{k-1}(s') - \sum_{s'\in \ftw^+(s)} f_{k-1}(s')\notag\\
&\qquad + \sum_{s'\in \fst^-(s)}f_{k-2}(s') - \sum_{s'\in \fst^+(s)}f_{k-2}(s') +\sum_{s'\in \fs^-(s)}f_k(s') - \sum_{s'\in \fs^+(s)}f_{k}(s')\notag\\
&\qquad + \beta \sum_{s'\in \fd^-(s)}f_k(s') - \beta \sum_{s'\in \fd^+(s)}f_k(s')\,, \label{limiteq}
\end{align}
where $f_{j}(s)=0$ for all $s$ if $j<0$. 
\end{thm}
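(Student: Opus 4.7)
The plan is to symmetrize \eqref{limequns} by summing suitably weighted versions of it over all choices of ``first loop'' and ``first edge,'' exploiting two symmetries of $f_k$. First, $\phi_{\Lambda,N,\beta}(s) = \avg{W_{l_1}\cdots W_{l_n}}/N^n$ is invariant under permutations of its arguments (Wilson loop variables commute), so $H_{k,N}(s)$ and hence $f_k(s)$ are permutation-symmetric in the components of the minimal representation of $s$. Second, loops are cyclic equivalence classes of closed paths, so cyclically rotating a single component so that any chosen location becomes the ``first'' position gives the same element of $\cs$.

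Fix $s$ with minimal representation $(l_1,\ldots,l_n)$. For each pair $(i, x')$ with $1\le i \le n$ and $x'$ a location in $l_i$, let $s_{i,x'}$ denote the loop sequence obtained by moving $l_i$ to the front and cyclically rotating it so $x'$ is its first location. The two symmetries above ensure that $f_k$, $f_{k-1}$, $f_{k-2}$ evaluated on $s_{i,x'}$ (and on the loop sequences produced by the twist, merger, split, or deformation operations attached to $s_{i,x'}$) agree with their values on the corresponding original sequences. We apply \eqref{limequns} to $s_{i,x'}$, divide by the associated value of $m$, denoted $m_{i,x'}$ (the number of positions in $l_i$ where the edge at $x'$ or its inverse occurs), and sum over all pairs $(i,x')$. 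Since for each $i$ there are $|l_i|$ such values of $x'$, the terms $m f_k(s)$ and $m f_{k-1}(s)$ in \eqref{limequns} contribute $|s|\, f_k(s)$ and $|s|\, f_{k-1}(s)$ respectively.

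For the remaining four sums a uniform counting argument applies. As a representative case, consider a positive twisting of $s$ at locations $(y, z)$ in $l_i$, with edge $e$ at $y$ and $e^{-1}$ at $z$. This operation appears in the twist sum attached to $s_{i,x'}$ precisely when the edge at $x'$ lies in $\{e, e^{-1}\}$; there are exactly $m_{i,x'}$ such positions $x'$, each carrying weight $1/m_{i,x'}$, so its total multiplicity in the summed equation is $1$. The same cancellation holds for negative twistings; for positive and negative splittings of $l_i$ at $(x, y)$, where $e_x$ and $e_y$ share an undirected edge that together with its inverse forms the eligible first-edge set; for positive and negative mergers of an ordered pair $(l_i, l_j)$ at $(x, y)$, which are eligible only when $l_i$ is the first component and $x'$ sits at a location of $e_x$ or $e_x^{-1}$; and for positive and negative deformations at $(x \in l_i,\, p \in \cp^+(e_x))$. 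The signs (positive subtypes with $-$, negative with $+$) are inherited from the unsymmetrized equation, producing exactly \eqref{limiteq}. The main bookkeeping obstacle is ensuring this uniform multiplicity-$1$ counting across all eight operation subtypes; this requires careful attention to the conventions for ordered pairs in the definitions of $\fs^{\pm}$, $\fst^{\pm}$, $\ftw^{\pm}$, $\fd^{\pm}$, but follows routinely from the definitions in Section~\ref{stringsec}.
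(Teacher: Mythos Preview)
Your argument is correct, but it follows a genuinely different route from the paper's. The paper does \emph{not} symmetrize \eqref{limequns} directly; instead it returns to the symmetrized master loop equation for $\phi_N$ (Theorem~\ref{mastersymm}), re-runs the algebra of Section~\ref{unsymmsec} in symmetrized form to obtain a symmetrized recursion for $H_{k,N}$, and then passes to the limit $N\to\infty$ by induction on $k$. Your approach is more economical: having already established \eqref{limequns}, you simply average it over all choices of first loop and first edge, using that $f_j$ is permutation-symmetric in the components of $s$ (since $\phi_N$ is) and independent of the cyclic representative of each loop. The counting argument --- that each operation tied to an unoriented edge $e$ in $l_i$ is picked up by exactly the $|C_i(e)|$ first-edge choices $x'$ with $e_{x'}\in\{e,e^{-1}\}$, each weighted $1/m_{i,x'}=1/|C_i(e)|$ --- is exactly right and handles all eight subtypes uniformly. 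The only point worth making explicit is that \eqref{limequns}, though stated for the fixed ``arbitrary rule'' choosing the first edge, in fact holds for \emph{every} choice of first loop and first edge: this is because the underlying equations \eqref{mlequns} and \eqref{equationhk} for $\phi_N$ and $H_{q,N}$ are valid for any such choice, and the limit argument in the proof of Theorem~\ref{eqfuns} is insensitive to it. With that remark added, your proof is complete and shorter than the paper's.
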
 
For $k=0$, the assertion of Theorem \ref{eqf} was proved in \cite{chatterjee15}. To be precise, the following result was proved. This will be needed in the proof of Theorem \ref{eqf}.
\begin{thm}\label{masterf0}
Then for any loop sequence $s$,
\begin{align}\label{mastereqf0}
|s|f_0(s) &=  \sum_{s'\in \fs^-(s)} f_0(s')- \sum_{s'\in \fs^+(s)} f_0(s') + \beta \sum_{s'\in \fd^-(s)} f_0(s')- \beta \sum_{s'\in \fd^+(s)} f_0(s')\, .
\end{align}
\end{thm}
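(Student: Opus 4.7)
The plan is to deduce the symmetric equation \eqref{limiteq} from the unsymmetric master equation \eqref{limequns} by summing copies of the latter over all possible choices of ``first loop'' and ``first edge''. The derivation of \eqref{limequns} in Theorem \ref{eqfuns} rests on Theorem \ref{mastern}, whose proof uses integration by parts against a single edge variable $Q_e$ and depends only on the choice of a loop containing $e$, not on the global labeling of the minimal representation. Hence the same derivation yields, for every $i \in \{1,\ldots,n\}$ and every edge $e$ appearing in $l_i$, an analog of \eqref{limequns} in which $l_i$, $C_i(e)$, and $m_i(e):=|C_i(e)|$ play the roles of $l_1$, $C_1$, and $m$, and in which the merger sum ranges over $r \ne i$. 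Equivalently, since $\phi_N$ is invariant under cyclic rotation within each loop and under permutation of the minimal representation, this generalization follows from \eqref{limequns} by relabeling and passing to the limit as in Theorem \ref{eqfuns}.

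Next, I would sum these generalized identities over all pairs $(i,e)$, where $i\in\{1,\ldots,n\}$ and $e$ ranges over distinct (undirected) edges appearing in $l_i$. The key algebraic input is that every position in $l_i$ contributes to exactly one $m_i(e)$, so $\sum_e m_i(e)=|l_i|$ and hence $\sum_{i,e}m_i(e)=|s|$. This immediately produces the terms $|s|f_k(s)$ and $|s|f_{k-1}(s)$ on the left-hand side and in the ``inaction'' part of the right-hand side of \eqref{limiteq}.

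The remainder is a combinatorial bookkeeping argument verifying that the $(i,e)$-summed unsymmetric twist, merger, split and deformation sums match their symmetric counterparts in \eqref{limiteq}. For twists internal to a fixed loop $l_i$, summing over $e$ collects every ordered pair of distinct positions $(x,y)$ in $l_i$ whose edges agree up to direction, each exactly once; the mixed-sign structure of $\sum_{\text{twist}\,s}$ (with $A_1A_1$ and $B_1B_1$ pairs entering with one sign, $A_1B_1$ and $B_1A_1$ with the opposite) aligns with $\sum_{\ftw^-}-\sum_{\ftw^+}$ via the definitions of positive and negative twisting. Splittings are treated identically. For mergers, letting the ``first loop'' range over all $l_i$ with $r\ne i$ precisely realizes the convention from Section \ref{stringsec} that $\fst^\pm(s)$ counts the operations at $(x,y)$ for $(l_i,l_r)$ and at $(y,x)$ for $(l_r,l_i)$ as distinct elements. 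For deformations, $\sum_{i,e}\sum_{p\in\cp^+(e)}\sum_{x\in C_i(e)}$ enumerates every (loop, position, adjacent plaquette) triple with the correct sign, matching $\sum_{\fd^-}-\sum_{\fd^+}$.

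The main difficulty is precisely this final matching: one must track the $A_i(e)/B_i(e)$ partition of $C_i(e)$ and the sign conventions of Section \ref{unsymmsec} carefully enough that the signed unsymmetric sums resolve cleanly into the $\pm$ components of $\ftw$, $\fst$, $\fs$, $\fd$ on the symmetric side. Beyond Theorem \ref{eqfuns} and the invariance of $\phi_N$ under cyclic rotations and permutations, no further analytic input is required; the argument is purely formal.
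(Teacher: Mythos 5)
Your derivation is essentially correct, but note that this paper does not prove Theorem \ref{masterf0} at all: it is imported verbatim from \cite{chatterjee15}, and even the general-$k$ symmetrized equation (Theorem \ref{eqf}) is obtained by a different route, namely starting from the finite-$N$ symmetrized master equation \eqref{finitemaster} of Theorem \ref{mastersymm} (also quoted from \cite{chatterjee15}) and passing to the limit, rather than by symmetrizing the limiting unsymmetrized equation as you do. Your route --- apply the unsymmetrized equation for every choice of distinguished loop $l_i$ and edge class $e$, use $\sum_e m_i(e)=|l_i|$ so that the left sides add up to $|s|f_0(s)$, and check that the signed $A_i(e)/B_i(e)$ bookkeeping reassembles into $\sum_{s'\in\fs^-(s)}-\sum_{s'\in\fs^+(s)}$ and $\beta\bigl(\sum_{s'\in\fd^-(s)}-\sum_{s'\in\fd^+(s)}\bigr)$ --- is a legitimate and rather transparent alternative, and for $k=0$ it is especially clean because the twist, merger and inaction terms are absent from \eqref{mastereqf0uns}. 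The counting is consistent with the paper's conventions (elements of $\fs^\pm(s)$, $\ftw^\pm(s)$, $\fst^\pm(s)$ are ordered-pair operations, and deformations are parameterized by triples consisting of a loop, a location, and a plaquette in $\cp^+(e)$), as one can confirm by observing that the same summation turns Theorem \ref{mastern} into Theorem \ref{mastersymm} at finite $N$.

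The one step you should tighten is the claim that \eqref{mastereqf0uns} (equivalently \eqref{limequns}) holds with an arbitrary distinguished edge $e$ of an arbitrary component $l_i$. Your ``equivalently, by cyclic rotation and permutation invariance'' shortcut does not deliver this: the first edge of a cycle is assigned by an arbitrary but fixed rule as a function of the cycle itself, so rotating a representative does not change which edge is designated first, and permuting the minimal representation only lets you move $l_i$ to the front, not select $e$. What does work is your primary justification: the integration-by-parts (Schwinger--Dyson) derivation of Theorem \ref{mastern} in \cite{chatterjee15} is carried out with respect to a single fixed edge variable and is insensitive to which edge of which loop is singled out, and the limiting argument behind Theorem \ref{masterf0uns} then transfers the resulting finite-$N$ identity to $f_0$ term by term, since all sums involved are finite. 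So your proof stands provided you invoke the proof, not merely the quoted statement, of the unsymmetrized master loop equation for the generalized choice of distinguished edge.
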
 
The following symmetrized version of Theorem \ref{mastern} was also proved in \cite{chatterjee15}. This too, will be needed in the proof of Theorem \ref{eqf}.
\begin{thm}\label{mastersymm}
Let $\phi_N = \phi_{\Lambda_N, N, \beta}$ be as in Theorem \ref{mainthmofpaper}. Take a loop sequence $s$  such that all vertices of $\zz^d$ that are at distance $\le 1$ from any of the component loops of $s$ are contained in $\Lambda_N$. Then $\phi_N$ satisfies
\begin{align}
(N-1)|s|\phi_N(s) &= \sum_{s'\in \ftw^-(s)} \phi_N(s') - \sum_{s'\in \ftw^+(s)} \phi_N(s') \notag\\
&\qquad + N\sum_{s'\in \fs^-(s)}\phi_N(s') - N\sum_{s'\in \fs^+(s)}\phi_N(s') \notag \\
&\qquad+ \frac{1}{N} \sum_{s'\in \fst^-(s)}\phi_N(s') - \frac{1}{N} \sum_{s'\in \fst^+(s)}\phi_N(s')\notag \\
&\qquad + N\beta \sum_{s'\in \fd^-(s)}\phi_N(s') - N\beta \sum_{s'\in \fd^+(s)}\phi_N(s')\, . \label{finitemaster}
\end{align}
\end{thm}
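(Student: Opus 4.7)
The plan is to derive \eqref{limiteq} from the finite-$N$ symmetrized master loop equation \eqref{finitemaster} (Theorem \ref{mastersymm}) by substituting the asymptotic expansion of $\phi_N$ furnished by Theorem \ref{eqfuns}(ii) and matching coefficients of powers of $1/N$. The argument runs parallel to the passage from the unsymmetrized finite-$N$ equation \eqref{mlequns} to its limit \eqref{limequns}, but now begins from the symmetrized version, so each signed ``difference'' sum such as $\sum_{s'\in\ftw^-(s)}-\sum_{s'\in\ftw^+(s)}$ plays the role that $\sum_{\textup{twist}\,s}$ played before.

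Concretely, for $|\beta|\le \beta_0(d,k+1)$ I would invoke the $(k+1)$-term expansion
\[
\phi_N(s') = \sum_{j=0}^{k}\frac{f_j(s')}{N^j} + O\!\left(\frac{1}{N^{k+1}}\right)
\]
uniformly over the finitely many loop sequences $s'$ appearing in the sums of \eqref{finitemaster}. Dividing \eqref{finitemaster} by $N$ assigns coefficients $1/N$, $1$, $1/N^2$, and $\beta$ to the twist, splitting, merging, and deformation sums respectively. Substituting the expansion on the right, together with
\[
\frac{(N-1)|s|}{N}\phi_N(s) = |s|\sum_{j=0}^{k}\frac{f_j(s)-f_{j-1}(s)}{N^j} + O\!\left(\frac{1}{N^{k+1}}\right)
\]
on the left (with the convention $f_{-1}\equiv 0$), turns both sides into polynomials in $1/N$ of degree at most $k$ plus a uniform $O(1/N^{k+1})$ remainder. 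A direct calculation, adopting the convention $f_i\equiv 0$ for $i<0$, identifies the coefficient of $1/N^j$ on the right as
\begin{align*}
&\sum_{s'\in\ftw^-(s)}f_{j-1}(s')-\sum_{s'\in\ftw^+(s)}f_{j-1}(s')+\sum_{s'\in\fs^-(s)}f_j(s')-\sum_{s'\in\fs^+(s)}f_j(s')\\
&\qquad +\sum_{s'\in\fst^-(s)}f_{j-2}(s')-\sum_{s'\in\fst^+(s)}f_{j-2}(s')+\beta\sum_{s'\in\fd^-(s)}f_j(s')-\beta\sum_{s'\in\fd^+(s)}f_j(s').
\end{align*}

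Since \eqref{finitemaster} is an exact identity for all sufficiently large $N$, the two asymptotic expansions on the left and right must agree to order $O(1/N^{k+1})$, so their coefficients of $1/N^j$ must coincide for each $0\le j\le k$. This uniqueness-of-asymptotic-expansion step is standard: multiply the vanishing difference by $N^j$ and take $N\to\infty$ successively for $j=0,1,\ldots,k$ to peel off each coefficient in turn. Specializing the matching to $j=k$ and transferring $|s|f_{k-1}(s)$ to the right reproduces exactly \eqref{limiteq}. The main obstacle is purely bookkeeping through the substitution, together with the mild shrinkage of the admissible range of $|\beta|$ needed to push Theorem \ref{eqfuns}(ii) one order beyond $k$; there is no genuine analytic difficulty beyond what was already handled in establishing Theorem \ref{eqfuns}.
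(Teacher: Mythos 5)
There is a fundamental mismatch here: the statement you were asked to prove is Theorem \ref{mastersymm} itself, i.e.\ the \emph{exact, finite-$N$} identity \eqref{finitemaster} satisfied by the lattice gauge theory expectations $\phi_N$, but your proposal takes \eqref{finitemaster} as its starting hypothesis and instead derives the limiting equation \eqref{limiteq} for the coefficient functions $f_k$ (which is the content of Theorem \ref{eqf}, a different result). As an argument for Theorem \ref{mastersymm} this is circular, and the method you use could not be repaired to give it: the $1/N$ expansion of Theorem \ref{eqfuns}(ii) is only an asymptotic statement with $o(N^{-k})$ errors as $N\to\infty$, so matching coefficients of powers of $1/N$ can only yield relations among the limiting functions $f_0,\ldots,f_k$; it can never produce an exact identity for $\phi_N$ at a fixed finite $N$, which is what \eqref{finitemaster} asserts. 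Moreover \eqref{finitemaster} holds for all $N$ and all $\beta$, with no strong-coupling restriction, whereas everything in your argument lives under a smallness assumption on $|\beta|$.

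What Theorem \ref{mastersymm} actually requires is finite-$N$ input of Schwinger--Dyson type: in the present paper it is quoted without proof from \cite{chatterjee15}, where it is obtained from integration by parts on $SO(N)$ (equivalently, by establishing the unsymmetrized master loop equation \eqref{mlequns} of Theorem \ref{mastern} with respect to each edge occurring in $s$, not just the distinguished first edge of $l_1$, and then summing over all locations so that the arbitrary choice of first edge disappears and the prefactors $N-1$, $N$, $1/N$, $N\beta$ and the signs attached to the positive and negative operations emerge). That group-integration computation is the genuinely missing ingredient in your proposal. Incidentally, the asymptotic-matching scheme you describe is close in spirit to how the paper does prove Theorem \ref{eqf} (subtracting the partial expansions, forming the quantities $H_{j,N}$, and letting $N\to\infty$), so your bookkeeping is not wasted --- it just addresses the wrong theorem.
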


We are now ready to prove Theorem \ref{eqf}. 
\begin{proof}[Proof of Theorem \ref{eqf}]
We will use induction on $k$. It was already proven for $f_0$ in Theorem~\ref{masterf0}. Take $k\geq 1$ and suppose that the claim is true for all $k'<k$. By the identity \eqref{triv}, the master loop equation from  Theorem~\ref{mastersymm} can be rewritten as
\begin{align}
&|s|\phi_N(s)=\biggl(\dfrac{1}{N}+\cdots+\dfrac{1}{N^k}+\dfrac{1}{N^k(N-1)}\biggr)\biggl(\sum_{s'\in \ftw^-(s)}\phi_N(s')-\sum_{s'\in \ftw^+(s)}\phi_N(s')\biggr)\notag\\
&\qquad\qquad+\biggl(\dfrac{1}{N^2}+\cdots+\dfrac{1}{N^k}+\dfrac{1}{N^k(N-1)}\biggr)\biggl(\sum_{s'\in \fst^-(s)}\phi_N(s')-\sum_{s'\in \fst^+(s)}\phi_N(s')\biggr)\notag\\
&\qquad\qquad+\biggl(1+\dfrac{1}{N}+\cdots+\dfrac{1}{N^k}+\dfrac{1}{N^k(N-1)}\biggr)\biggl(\sum_{s'\in \fs^-(s)}\phi_N(s')-\sum_{s'\in \fs^+(s)}\phi_N(s')\biggr)\notag\\
&\qquad\qquad+\beta\biggl(1+\dfrac{1}{N}+\cdots+\dfrac{1}{N^k}+\dfrac{1}{N^k(N-1)}\biggr)\biggl(\sum_{s'\in \fd^-(s)}\phi_N(s')-\sum_{s'\in \fd^+(s)}\phi_N(s')\biggr)\,. \label{ss1}
\end{align}
Take any $k'< k$. By the induction hypothesis, equation \eqref{limiteq} can be written as
\begin{align}
|s|f_{k'}(s) &= \sum_{s'\in \ftw^-(s)} \biggl(f_{0}(s')+\cdots+f_{k'-1}(s')\biggr)-\sum_{s'\in \ftw^+(s)} \biggl(f_{0}(s')+\cdots+f_{k'-1}(s')\biggr)\\
&\qquad + \sum_{s'\in \fst^-(s)} \biggl(f_{0}(s')+\cdots+f_{k'-2}(s')\biggr)-\sum_{s'\in \fst^+(s)} \biggl(f_{0}(s')+\cdots+f_{k'-2}(s')\biggr)\notag\\
&\qquad + \sum_{s'\in \fs^-(s)} \biggl(f_{0}(s')+\cdots+f_{k'}(s')\biggr)-\sum_{s'\in \fs^+(s)} \biggl(f_{0}(s')+\cdots+f_{k'}(s')\biggr)\notag\\
&\qquad + \beta\sum_{s'\in \fd^-(s)} \biggl(f_{0}(s')+\cdots+f_{k'}(s')\biggr)-\beta \sum_{s'\in \fd^+(s)} \biggl(f_{0}(s')+\cdots+f_{k'}(s')\biggr)\,.\label{ss3}
\end{align}
Therefore, 
\begin{align}
&|s|\biggl(f_0(s)+\dfrac{1}{N}f_1(s)+\cdots+\dfrac{1}{N^{k-1}}f_{k-1}(s)\biggr)\notag\\
&\quad=\sum_{s'\in \ftw^-(s)} \sum_{j=0}^{k-2}\biggl(\dfrac{1}{N^{j+1}}+\cdots+\dfrac{1}{N^{k-1}}\biggr)f_j(s')-\sum_{s'\in \ftw^+(s)} \sum_{j=0}^{k-2}\biggl(\dfrac{1}{N^{j+1}}+\cdots+\dfrac{1}{N^{k-1}}\biggr)f_j(s')\notag\\
&\quad+\sum_{s'\in \fst^-(s)} \sum_{j=0}^{k-3}\biggl(\dfrac{1}{N^{j+2}}+\cdots+\dfrac{1}{N^{k-1}}\biggr)f_j(s')-\sum_{s'\in \fst^+(s)} \sum_{j=0}^{k-3}\biggl(\dfrac{1}{N^{j+2}}+\cdots+\dfrac{1}{N^{k-1}}\biggr)f_j(s')\notag\\
&\quad+\sum_{s'\in \fs^-(s)} \sum_{j=0}^{k-1}\biggl(\dfrac{1}{N^{j}}+\cdots+\dfrac{1}{N^{k-1}}\biggr)f_j(s')-\sum_{s'\in \fs^+(s)} \sum_{j=0}^{k-1}\biggl(\dfrac{1}{N^{j}}+\cdots+\dfrac{1}{N^{k-1}}\biggr)f_j(s')\notag\\
&\quad+\beta\sum_{s'\in \fd^-(s)} \sum_{j=0}^{k-1}\biggl(\dfrac{1}{N^{j}}+\cdots+\dfrac{1}{N^{k-1}}\biggr)f_j(s')-\beta\sum_{s'\in \fd^+(s)} \sum_{j=0}^{k-1}\biggl(\dfrac{1}{N^{j}}+\cdots+\dfrac{1}{N^{k-1}}\biggr)f_j(s')\,. \label{ss2}
\end{align}
Subtracting equation \eqref{ss1} from \eqref{ss2}, combining the corresponding terms and then multiplying both sides by $N^k$, we get
\begin{align}
&|s|H_{k,N}(s)=\sum_{s'\in \ftw^-(s)} \biggl(\sum_{j=0}^{k-1}H_{j,N}(s')+\dfrac{1}{N-1}\phi_N(s')\biggr)-\sum_{s'\in \ftw^+(s)}\biggl(\sum_{j=0}^{k-1}H_{j,N}(s')+\dfrac{1}{N-1}\phi_N(s')\biggr)\notag\\
&\quad+\sum_{s'\in \fst^-(s)} \biggl(\sum_{j=0}^{k-2}H_{j,N}(s')+\dfrac{1}{N-1}\phi_N(s')\biggr)-\sum_{s'\in \fst^+(s)}\biggl(\sum_{j=0}^{k-2}H_{j,N}(s')+\dfrac{1}{N-1}\phi_N(s')\biggr)\notag\\
&\quad +\sum_{s'\in \fs^-(s)} \biggl(\sum_{j=0}^{k}H_{j,N}(s')+\dfrac{1}{N-1}\phi_N(s')\biggr)-\sum_{s'\in \fs^+(s)}\biggl(\sum_{j=0}^{k}H_{j,N}(s')+\dfrac{1}{N-1}\phi_N(s')\biggr)\notag\\
&\quad +\beta\sum_{s'\in \fd^-(s)} \biggl(\sum_{j=0}^{k}H_{j,N}(s')+\dfrac{1}{N-1}\phi_N(s')\biggr)-\beta\sum_{s'\in \fd^+(s)}\biggl(\sum_{j=0}^{k}H_{j,N}(s')+\dfrac{1}{N-1}\phi_N(s')\biggr)\,.\notag
\end{align}
Now send $N\to \infty$ to get
\begin{align}
&|s|f_k(s)=\sum_{s'\in \ftw^-(s)}\sum_{j=0}^{k-1}f_{j}(s')-\sum_{s'\in \ftw^+(s)}\sum_{j=0}^{k-1}f_{j}(s')+\sum_{s'\in \fst^-(s)} \sum_{j=0}^{k-2}f_{j}(s')-\sum_{s'\in \fst^+(s)}\sum_{j=0}^{k-2}f_{j}(s')\notag\\
&\quad +\sum_{s'\in \fs^-(s)} \sum_{j=0}^{k}f_{j}(s')-\sum_{s'\in \fs^+(s)}\sum_{j=0}^{k}f_{j}(s')+\beta\sum_{s'\in \fd^-(s)} \sum_{j=0}^{k}f_{j}(s')-\beta\sum_{s'\in \fd^+(s)}\sum_{j=0}^{k}f_{j}(s')\,.\notag
\end{align}
By using equation \eqref{ss3} with $k'=k-1$ we see that above identity is equivalent to \eqref{limiteq}.
\end{proof}
\section{Series Expansion}\label{secseries}
Recall from Section~\ref{stringsec} the definition of a trajectory $X$ and the weight $w_\beta(X)$ of a trajectory at inverse coupling strength $\beta$. Recall also the definition of $\mx_{i,k}(s)$ from Section~\ref{stringsec} and the definition of the number $m$ associated with a loop sequence $s$ from the beginning of Section~\ref{unsymmsec}. Define a collection of numbers $\{a_{i,k}(s): i,k\ge 0,\ s\in \cs\}$ inductively as follows. Let $a_{0,0}(\emptyset):=1$, $a_{i, 0}(\emptyset):=0$ for all $i\geq 1$ and $a_{0,0}(s):=0$ for every non-null $s$. If $k\ne 0$ let $a_{i,k}(\emptyset):=0$ for all $i\geq 0$ and $a_{0,k}(s):=0$ for all $s\in \cs$. Fix a triple $(s, i, k)$ and suppose we have defined $a_{i', k'}(s')$ for all triples $(s', i', k')$ such that either $k'<k$, or $k'=k$ and $i'<i$, or $k'=k$, $i'=i$ and $\iota(s')<\iota(s)$. Then let
\begin{align}
a_{i,k}(s):=a_{i, k-1}(s)+\dfrac{1}{m}\sum_{\text{twist}\, s} a_{i,k-1}+\dfrac{1}{m}\sum_{\text{merge}\, s} a_{i,k-2}+\dfrac{1}{m}\sum_{\text{split}\, s} a_{i,k}+\dfrac{1}{m}\sum_{\text{deform}\, s} a_{i-1,k}\label{recurcoeff}
\end{align}
where we use convention that $a_{i,k}(s)=0$ if $i<0$ or $k<0$. Note that by Lemma \ref{iota2}, the fourth term on the right is well-defined at the time of defining $a_{i,k}(s)$.  The goal of this section is to prove the following result.
\begin{thm}\label{series}
Let $f_k$ be as in Theorem \ref{eqfuns}. If  $|\beta|$ is sufficiently small (depending only on $d$ and $k$), then for any $s$,
\[
f_k(s)=\sum_{i=0}^{\infty}a_{i,k}(s)\beta^{i}\,.
\]
Moreover, the infinite series is absolutely convergent.
\end{thm}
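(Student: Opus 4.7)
The plan is to introduce the formal power series
\[
g_k(s):=\sum_{i=0}^{\infty} a_{i,k}(s)\,\beta^i,
\]
and to establish that (i) the series converges absolutely for $|\beta|$ sufficiently small (depending on $d$ and $k$), (ii) $g_k$ satisfies the unsymmetrized master loop equation \eqref{limequns}, and (iii) there is a constant $L$ (depending on $d$ and $k$) with $|g_k(s)|\le L^{|s|}$ for all $s$. Once these three properties are in hand, Theorem \ref{uniquefk} (and Theorem \ref{uniquef0uns} when $k=0$) forces $g_k=f_k$, which is precisely the assertion of the theorem.

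Assertion (ii) is essentially built into the recursive definition \eqref{recurcoeff}. Multiplying \eqref{recurcoeff} by $\beta^i$ and summing over $i\ge 0$, the reindexing $j=i-1$ converts the deformation sum into $\beta\sum_{\text{deform}\,s}g_k(s')$ (using $a_{-1,k}\equiv 0$), while the other four sums on the right produce $mg_{k-1}(s)$, $\sum_{\text{twist}\,s}g_{k-1}$, $\sum_{\text{merge}\,s}g_{k-2}$, and $\sum_{\text{split}\,s}g_k$, respectively. This rearrangement is justified once the series is known to converge absolutely.

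The technical heart of the argument is the pointwise estimate $|a_{i,k}(s)|\le M_k^{|s|}R_k^i$ for constants $M_k,R_k>0$ depending only on $d$ and $k$. I would prove this by a triple induction — outermost on $k$, middle on $i$, innermost on $\iota(s)$ (well-founded thanks to Lemma \ref{iota2}, since splittings strictly decrease $\iota$) — mimicking the generating-function method of Lemma \ref{boundHq}. Set
\[
D_{i,k}(\delta):=\sup_{s\,:\,\delta(s)\le\delta}|a_{i,k}(s)|, \qquad F_{i,k}(\lambda):=\sum_{\delta\in\Delta^+}\lambda^{\iota(\delta)}D_{i,k}(\delta).
\]
Using Lemmas \ref{twist}--\ref{deform} to track how each string operation changes $|s|$ and $\iota$, the twist and merger terms of \eqref{recurcoeff} contribute an inhomogeneous bound of the form $C|\delta|M_{k-1}^{|\delta|}R_{k-1}^i+C|\delta|M_{k-2}^{|\delta|}R_{k-2}^i$ (by the outer inductive hypothesis, together with the degree counts $2m^2$ and $m|s|$ used in the proof of Lemma \ref{boundHq}); the deformation term contributes a factor $O(d)\lambda^{-4}F_{i-1,k}(\lambda)$ as in \eqref{mm4} (by the middle inductive hypothesis); and the splitting term, whose targets have strictly smaller $\iota$, contributes $(\lambda+\lambda^3)F_{i,k}(\lambda)$ as in \eqref{mm2}--\eqref{mm3}. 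Choosing $\lambda$ small enough depending on $k$ absorbs the splitting contribution on the left and yields $F_{i,k}(\lambda)\le \tilde C_k R_k^i$, from which the desired pointwise bound on $|a_{i,k}(s)|$ follows.

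With the bound in hand, the series for $g_k(s)$ converges absolutely for $|\beta|<1/R_k$ and satisfies $|g_k(s)|\le 2M_k^{|s|}$ whenever $|\beta|\le 1/(2R_k)$, yielding (i) and (iii); Theorem \ref{uniquefk} applied with $L=2M_k$ then closes the argument, provided $|\beta|$ is also below the threshold $\beta_1(2M_k,d,k)$. The principal obstacle is the deformation term: because it raises both $|s|$ and $\iota(s)$ by up to $4$ while only lowering $i$ by $1$, it enters the generating function with a factor $\lambda^{-4}$, forcing $R_k$ to be of order $d/\lambda^4$ and thereby dictating how small $|\beta|$ must ultimately be taken. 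Balancing this deformation growth against the requirement that the splitting contribution $(\lambda+\lambda^3)F_{i,k}$ be strictly absorbed — a coupled choice of $\lambda$ and $R_k$ that must survive the outer induction on $k$ — is the delicate step; the remaining bookkeeping follows Lemma \ref{boundHq} almost verbatim.
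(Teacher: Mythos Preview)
Your overall architecture is exactly the paper's: define the power series, establish a growth bound on $a_{i,k}$ to get absolute convergence and an exponential bound $|g_k(s)|\le L^{|s|}$, verify that $g_k$ satisfies \eqref{limequns} by summing the recursion \eqref{recurcoeff} in $i$, and then invoke Theorem~\ref{uniquefk}. One point you leave implicit: to apply Theorem~\ref{uniquefk} you need $g_{k-1}=f_{k-1}$ and $g_{k-2}=f_{k-2}$ already, so an outer induction on $k$ is required here as well; the paper makes this explicit.

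The substantive difference is in how the coefficient bound is obtained. The paper does \emph{not} use the generating-function machinery of Lemma~\ref{boundHq}; it proves instead the pointwise estimate (Lemma~\ref{coeff})
\[
|a_{i,k}(s)|\le K(d)^{(5+k)i+\iota(\delta)}\,|\delta|^{2k}\,C_{\delta_1-1}\cdots C_{\delta_n-1}
\]
by direct triple induction on $(k,i,\iota(s))$. The product of Catalan numbers is what makes this close: splittings are absorbed by the convolution identity \eqref{catalan}, deformations by \eqref{catalan2}, and mergers by the ad hoc inequality of Lemma~\ref{app1}. The factor $|\delta|^{2k}$ soaks up the extra powers of $|\delta|$ coming from the twist and merger counts when one passes from level $k-1,k-2$ to level $k$. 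No summation over degree sequences ever occurs.

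Your generating-function route can also be made to work, but it has a gap as written. In Lemma~\ref{boundHq} the self-referential inequality for $F_N(\lambda)$ can be rearranged only because $F_N(\lambda)$ is already known to be finite, and that finiteness comes from the crude bound $|H_{q,N}(s)|\le 2NL_{q-1}^{|s|}$, itself immediate from the identity $H_{q,N}=NH_{q-1,N}-Nf_{q-1}$. There is no comparable one-line relation expressing $a_{i,k}$ in terms of lower indices, so you have no a priori guarantee that $F_{i,k}(\lambda)<\infty$ before you try to solve for it. A truncation argument (restrict to $\iota(\delta)\le M$, where the sum is finite, derive a bound uniform in $M$, then let $M\to\infty$) repairs this, but it is not the verbatim transplant from Lemma~\ref{boundHq} you describe. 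The paper's Catalan-number approach sidesteps the issue entirely, at the cost of needing Lemma~\ref{app1}.
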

The case $k=0$ of Theorem \ref{series} was established in \cite{chatterjee15}, where the following lemma about $a_{i,0}$ was also proved. Recall the set $\Delta$ of degree sequences and the associated notations defined inside the proof of Lemma \ref{boundHq}.
\begin{lmm}\label{coefff0}
Let $a_{i,0}$ be defined as in Theorem \ref{series}. There is a constant $K(d)\geq 4$ such that if $s$ is a loop sequence with degree sequence $\delta = (\delta_1,\ldots,\delta_n)$, then for any $i\ge 0$,
\begin{align*}
|a_{i,0}(s)|\leq K(d)^{5i+\iota(\delta)}C_{\delta_1-1}C_{\delta_2-1}\cdots C_{\delta_n-1}\,, 
\end{align*}
where $C_i$ is the $i^{\textup{th}}$ Catalan number. The product of Catalan numbers is interpreted as $1$ when~$s=\emptyset$.
\end{lmm}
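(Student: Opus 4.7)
The plan is to prove the lemma by lexicographic strong induction on the pair $(i, \iota(s))$, with $i$ as the outer index. The base cases $s = \emptyset$ are immediate from the definition: $a_{0,0}(\emptyset) = 1$ matches $K(d)^0$, and $a_{i,0}(\emptyset) = 0$ for $i \ge 1$ satisfies any bound. For $s \ne \emptyset$, since $a_{i,-1} = a_{i,-2} = 0$, the recursion \eqref{recurcoeff} at $k = 0$ collapses to
\[
a_{i,0}(s) = \frac{1}{m}\sum_{\text{split}\,s} a_{i,0} + \frac{1}{m}\sum_{\text{deform}\,s} a_{i-1,0},
\]
so the two contributions to estimate are the splitting term (at the same $i$, with strictly smaller $\iota$ by Lemma \ref{iota2}) and the deformation term (at $i-1$, with $\iota$ possibly larger by up to $4$ by Lemma \ref{deform}). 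When $i = 0$, only the splitting term is present, which is the case already handled in \cite{chatterjee15} as part of the proof of existence of $f_0$.

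For the splitting term, I would partition the sum into its four subtypes according to whether $x$ and $y$ both lie in $A_1$, both in $B_1$, or one in each. For a positive split with $x, y \in A_1$ satisfying $|y - x| = k$, Lemma \ref{split1} gives $\delta^{(1)} \le \delta_1 - k$ and $\delta^{(2)} \le k$ for the two resulting loops; monotonicity of Catalan numbers and the inductive hypothesis then bound each summand by $K(d)^{5i + \iota(s) - 1}\, C_{\delta_1 - k - 1}\, C_{k-1}\, \prod_{r \ge 2} C_{\delta_r - 1}$. Summing over $k$ and applying the Catalan recursion \eqref{catalan} in the form $C_{\delta_1 - 1} = \sum_{k=1}^{\delta_1 - 1} C_{\delta_1 - k - 1} C_{k-1}$ collapses this to $|A_1| \cdot C_{\delta_1 - 1} K(d)^{5i + \iota(s) - 1} \prod_{r \ge 2} C_{\delta_r - 1}$; the factor $1/m$ then absorbs $|A_1| \le m$. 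Negative splits are identical modulo the shift given by Lemma \ref{split2}, so the four subtypes together contribute at most $4\, K(d)^{5i + \iota(s) - 1} \prod_{r=1}^n C_{\delta_r - 1}$.

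For the deformation term (present when $i \ge 1$), Lemma \ref{deform} gives $\iota(s') \le \iota(s) + 4$, and the new first loop has length at most $\delta_1 + 4$, so iterating \eqref{catalan2} shows that the Catalan factor grows by at most $4^4$. Using $|\cp^+(e)| \le 2(d-1)$ together with the two orientations, the total number of deformation terms is at most $4(d-1)\, m$, and the inductive hypothesis at $i-1$ yields
\[
\frac{1}{m}\sum_{\text{deform}\,s} |a_{i-1,0}(s')| \le 4(d-1)\cdot 4^4 \cdot K(d)^{5i + \iota(s) - 1}\, \prod_{r=1}^n C_{\delta_r - 1}.
\]
Combining with the splitting bound gives $|a_{i,0}(s)| \le \bigl(4 + 1024(d-1)\bigr) K(d)^{5i + \iota(s) - 1} \prod_r C_{\delta_r - 1}$, so choosing $K(d) := \max(4,\, 1024\,d)$ closes the induction. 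Degenerate cases (a split or deformation producing a null loop) are automatically subsumed, because $\iota(s')$ drops further and the Catalan product can only shrink.

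The main obstacle will be threading the Catalan recursion cleanly through the four splitting subtypes: positive and negative splits yield different length estimates, and corner cases where one of the two resulting loops becomes null must be reconciled with the convention that the Catalan product equals $1$ at $\emptyset$. Once this bookkeeping is done, the exponent $5i$ in the Catalan bound is precisely calibrated so that the single power of $K(d)$ that each deformation gains (namely $5$ from the drop in $i$ minus $4$ from the growth in $\iota$) covers the unavoidable $O(d)$ multiplicative factor from the number of plaquettes through each edge.
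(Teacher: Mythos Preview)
Your approach is correct and matches the paper's method; the paper quotes this lemma from \cite{chatterjee15} without proof, but the argument you sketch is exactly the $k=0$ specialization of the paper's proof of the generalization Lemma~\ref{coeff}. One small bookkeeping slip: when you sum the positive splitting terms over $y\in A_1\setminus\{x\}$ for fixed $x$, up to two distinct $y$ can give the same value of $|y-x|$, so the collapse to the Catalan recursion incurs an extra factor of $2$ (see the derivation of \eqref{cc2}); this adjusts your splitting constant from $4$ to at most $4K^{-1}+4K^{-3}$ times $K^{5i+\iota(\delta)}$, which your choice $K(d)=1024d$ still absorbs.
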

The first step in the proof of Theorem \ref{series} is the following generalization of Lemma \ref{coefff0}.
\begin{lmm}\label{coeff}
There is a constant $K(d)\geq 4$ such that if $s$ is a loop sequence with degree sequence $\delta = (\delta_1,\ldots,\delta_n)$, then for any $i,k\ge 0$,
\begin{align*}
|a_{i,k}(s)|\leq K(d)^{(5+k)i+\iota(\delta)}|\delta|^{2k}C_{\delta_1-1}C_{\delta_2-1}\cdots C_{\delta_n-1}\,, 
\end{align*}
where $C_i$ is the $i^{\textup{th}}$ Catalan number. The product of Catalan numbers is interpreted as $1$ when~$s=\emptyset$.
\end{lmm}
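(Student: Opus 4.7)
I would prove the lemma by triple induction, following the structure of \eqref{recurcoeff}: primary induction on $k$, secondary induction on $i$ for fixed $k$, and tertiary induction on $\iota(s)$ for fixed $(i,k)$. The base case $k=0$ is Lemma~\ref{coefff0}, and the degenerate cases ($s=\emptyset$, $i<0$, or $k<0$) are immediate from the conventions. For the inductive step, take absolute values in \eqref{recurcoeff} and bound each of the five terms using the preliminaries of Section~\ref{prelim}, choosing $K(d)$ large at the end to absorb all constants.

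The term $a_{i,k-1}(s)$ and the twisting sum fall directly under the $k-1$ hypothesis. For twistings, Lemma~\ref{twist} together with the monotonicity of Catalan numbers yields $\prod_j C_{\delta_j(s')-1}\le\prod_j C_{\delta_j(s)-1}$; since there are at most $2m^2\le 2m|s|$ twistings, after dividing by $m$ the linear factor in $|s|$ is absorbed by the gap $|\delta|^{2(k-1)}\to|\delta|^{2k}$. The merger sum uses the $k-2$ hypothesis and crucially Lemma~\ref{app1}, which converts $C_{|l_1\oplus l_r|-1}$ into $(|l_1|+|l_r|)^2 C_{|l_1|-1}C_{|l_r|-1}\le|\delta|^2\prod_j C_{\delta_j-1}$; combined with the $O(m|s|)$ merger count this yields polynomial overhead $O(|s|^3)$, safely fitting inside $|\delta|^4=|\delta|^{2k}/|\delta|^{2(k-2)}$ (noting that any non-trivial merger forces $|\delta|\ge 8$). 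The deformation term invokes the $(i-1,k)$ hypothesis, Lemma~\ref{deform}, the bound $|\cp^+(e)|\le 2(d-1)$, and \eqref{catalan2} to control the Catalan inflation when $|l_1|$ grows by at most $4$; the factor $K^{5+k}$ gained in the exponent by incrementing $i$ is made large enough (relative to $d$) to swallow the $2(d-1)m$ plaquette-orientation choices and the bounded Catalan adjustment.

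The most delicate piece is the splitting sum, which calls on $a_{i,k}(s')$ at the same $(i,k)$; here the inner induction on $\iota$ is justified by Lemma~\ref{iota2}. The essential combinatorial input is the Catalan recursion \eqref{catalan}: Lemmas~\ref{split1} and~\ref{split2} give split-loop lengths summing to at most $|l_1|$, so $\sum_y C_{\delta_1-|y-x|-1}C_{|y-x|-1}$ and its negative-split analog telescope via \eqref{catalan} back to $C_{\delta_1-1}$, producing no net Catalan inflation. The factor $K^{-1}$ gained from $\iota(s')<\iota(s)$ then dominates the polynomial overhead and closes the induction. The main obstacle is the bookkeeping required to verify that a single choice of $K(d)$ controls all five contributions simultaneously inside $K^{(5+k)i+\iota}|\delta|^{2k}\prod_j C_{\delta_j-1}$; the merger term is the tightest, since it is the only one that genuinely demands the full $|\delta|^2$ increment over the $(k-1)$-bound, and it works precisely because of the quadratic overhead in Lemma~\ref{app1} together with the $|\delta|\ge 8$ constraint on any non-trivial merger.
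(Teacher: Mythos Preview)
Your proposal is correct and follows essentially the same route as the paper's proof: the same three-fold induction on $(k,i,\iota(s))$, the same five-term decomposition of \eqref{recurcoeff}, and the same deployment of Lemmas~\ref{twist}--\ref{iota2}, \ref{app1}, and the Catalan identities \eqref{catalan}--\eqref{catalan2} to handle the respective terms, with $K(d)$ chosen at the end to close the induction. Your identification of the splitting term as the one requiring the inner $\iota$-induction and of the merger term as the one that genuinely consumes the $|\delta|^{2}$ headroom (via Lemma~\ref{app1}) matches the paper's argument exactly.
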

\begin{proof}
We will use three fold induction: first over $k$, then over $i$ and finally over $\iota(s)$. The number $K=K(d)\geq 4$ will be chosen at the end of the proof. The case $k=0$ has already been established in Lemma~\ref{coefff0}. Let $k\geq 1$ and suppose that the claimed inequality holds for all triples $(s', i', k')$  with $k'<k$. Since $a_{0,k}(s)=0$ for all $s\in \cs$, the inequality is trivially true for triples $(s, 0, k)$. 

Take $i\geq 1$ and assume that the inequality holds for all triples $(s', i',k')$ with either $k'< k$, or $k'=k$ and $i'<i$. We will use induction on $\iota(s)$ to prove that it holds for $(s, i, k)$ for any loop sequence $s$. If $s=\emptyset$ then it is obviously true. Let $s\neq \emptyset$ and suppose that the bound on $|a_{i,k}(s')|$ holds for all $s'$ with $\iota(s')<\iota(s)$. Let $l=(l_1, \dots, l_n)$ be the minimal representation of $s$ and let $m, A_1, B_1$ and $C_1$ be as in Section \ref{unsymmsec}. By induction hypothesis,
\begin{align}
|a_{i, k-1}(s)|&\leq  K^{(5+k-1)i+\iota(\delta)}|\delta|^{2(k-1)}C_{\delta_1-1}C_{\delta_2-1}\cdots C_{\delta_n-1}\notag\\
&\le K^{(5+k)i-1+\iota(\delta)}|\delta|^{2k}C_{\delta_1-1}C_{\delta_2-1}\cdots C_{\delta_n-1}\,. \label{dd0}
\end{align}
By Lemma \ref{twist}, if $s' \in \ftw(s)$ then $|\delta(s')|\leq |\delta|$ and $\iota(\delta(s'))\leq \iota(\delta)$. So by the induction hypothesis,
\begin{align}
&\dfrac{1}{m}\sum_{\substack{x, y\in A_1\\x\ne y}} |a_{i,k-1}(\propto_{x,y} l_1, l_2, \ldots, l_n)|\notag\\
&\leq \dfrac{1}{m}\sum_{\substack{x, y\in A_1\\x\ne y}} K^{(5+k-1)i+\iota(\delta)}|\delta|^{2(k-1)}C_{\delta_1-1}C_{\delta_2-1}\cdots C_{\delta_n-1}\notag\\
&\leq m K^{(5+k)i-1+\iota(\delta)}|\delta|^{2(k-1)}C_{\delta_1-1}C_{\delta_2-1}\cdots C_{\delta_n-1}\notag\\
& \leq K^{(5+k)i-1+\iota(\delta)}|\delta|^{2k}C_{\delta_1-1}C_{\delta_2-1}\cdots C_{\delta_n-1}\,.\notag
\end{align}
The same bound also holds for the other terms in twisting sum. So by the triangle inequality,
\begin{align}
\dfrac{1}{m}\biggl|\sum_{\text{twist}\,s}a_{i,k-1}\biggr|\leq 4K^{(5+k)i-1+\iota(\delta)}|\delta|^{2k}C_{\delta_1-1}C_{\delta_2-1}\cdots C_{\delta_n-1}\,.\label{dd1}
\end{align}
By Lemma \ref{merger}, if $s'\in \fst(s)$ then $|\delta(s')|\leq |\delta|$ and $\iota(\delta(s')) \leq \iota(\delta)+1$. So by the induction hypothesis and Lemma \ref{app1},
\begin{align*}
&\dfrac{1}{m}\sum_{r=2}^n\sum_{x\in C_1, \, y\in C_r} |a_{i,k-2}(l_1 \ominus_{x,y} l_r, l_2, \ldots, l_{r-1}, l_{r+1}, \ldots, l_n)| \\
&\leq \dfrac{1}{m}\sum_{r=2}^n\sum_{x\in C_1, \, y\in C_r}  K^{(5+k-2)i+\iota(\delta)+1}|\delta|^{2(k-2)}C_{\delta_1+\delta_r-1}C_{\delta_2-1}\cdots C_{\delta_{r-1}-1}C_{\delta_{r+1}-1}\cdots C_{\delta_n-1}\notag\\
&\leq \dfrac{1}{m}\sum_{r=2}^n |C_1||C_r|K^{(5+k-2)i+\iota(\delta)+1}|\delta|^{2(k-2)}(\delta_1+\delta_r)^2C_{\delta_1-1}\cdots  C_{\delta_n-1}\notag\\
&\leq K^{(5+k-2)i+\iota(\delta)+1}|\delta|^{2(k-2)}|\delta|^3C_{\delta_1-1}\cdots  C_{\delta_n-1}\notag\\
&\leq K^{(5+k)i-1+\iota(\delta)}|\delta|^{2k}C_{\delta_1-1}\cdots  C_{\delta_n-1}\,.\notag\\
\end{align*}
The same bound also holds for the second term in the merger sum. So by the triangle inequality,
\begin{align}
\dfrac{1}{m}\biggl|\sum_{\text{merge}\,s}a_{i,k-2} \biggr|\leq 2K^{(5+k)i-1+\iota(\delta)}|\delta|^{2k}C_{\delta_1-1}C_{\delta_2-1}\cdots C_{\delta_n-1}\,.\label{dd2}
\end{align}
By Lemma \ref{split2}, the induction hypothesis and the identity \eqref{catalan},
\begin{align}
&\frac{1}{m}\sum_{x\in A_1,\, y\in B_1} |a_{i, k}(\times_{x,y}^1 l_1,\times_{x,y}^2l_1,l_2,\ldots, l_n) |\nonumber\\
&\le \frac{1}{m}\sum_{x\in A_1,\, y\in B_1}K^{(5+k)i+\iota(\delta)-3} |\delta|^{2k} C_{\delta_1 - |x-y|-2} C_{|x-y|-2}C_{\delta_2-1}\cdots C_{\delta_n-1}\nonumber\\
&\le \frac{2}{m}\sum_{x\in A_1} \sum_{r=2}^{\delta_1-2} K^{(5+k)i+\iota(\delta)-3}|\delta|^{2k} C_{\delta_1 - r-2} C_{r-2}C_{\delta_2-1}\cdots C_{\delta_n-1}\nonumber\\
&\le 2K^{(5+k)i+\iota(\delta)-3} |\delta|^{2k} C_{\delta_1-1}\cdots C_{\delta_n-1}\, .\label{cc1}
\end{align}
The same bound holds if $A_1$ and $B_1$ are swapped. 
Similarly, Lemma \ref{split1}, the induction hypothesis and the identity \eqref{catalan},
\begin{align}
&\frac{1}{m}\sum_{\substack{x,y\in A_1\\ x\ne y}} |a_{i, k}(\times_{x,y}^1 l_1,\times_{x,y}^2l_1,l_2,\ldots, l_n) |\nonumber\\
&\le \frac{1}{m}\sum_{\substack{x,y\in A_1\\ x\ne y}}K^{(5+k)i+\iota(\delta)-1} |\delta|^{2k} C_{\delta_1 - |x-y|-1} C_{|x-y|-1}C_{\delta_2-1}\cdots C_{\delta_n-1}\nonumber\\
&\le \frac{2}{m}\sum_{x\in A_1} \sum_{r=1}^{\delta_1-1} K^{(5+k)i+\iota(\delta)-1}|\delta|^{2k} C_{\delta_1 - r-1} C_{r-1}C_{\delta_2-1}\cdots C_{\delta_n-1}\nonumber\\
&\le 2K^{(5+k)i+\iota(\delta)-1} |\delta|^{2k} C_{\delta_1-1}\cdots C_{\delta_n-1}\, .\label{cc2}
\end{align}
The same bound holds if $A_1$ is replaced by $B_1$. So by combining \eqref{cc1} and \eqref{cc2} we get
\begin{align}
\dfrac{1}{m}\biggl|\sum_{\text{split}\,s} a_{i,k}(s)\biggr|\leq (4K^{-3}+4K^{-1}) K^{(5+k)i+\iota(\delta)} |\delta|^{2k} C_{\delta_1-1}\cdots C_{\delta_n-1}\, .\label{dd3}
\end{align}
By Lemma \ref{deform}, if $s'\in \fd(s)$ then $|\delta(s')|\leq |\delta|+4$ and $\iota(\delta(s'))\leq \iota(\delta)+4$. So for $K\geq 4$, the inequality~\eqref{catalan2} gives
\begin{align*}
|a_{i-1,k}(l_1\oplus_{x} p , l_2,\ldots, l_n)| &\le K^{(5+k)(i-1)+\iota(\delta)+4} (|\delta|+4)^{2k} C_{\delta_1+3} C_{\delta_2-1}\cdots C_{\delta_n-1}\nonumber\\
&\le K^{(5+k)i-1-k+\iota(\delta)}(2|\delta|)^{2k}4^{4}C_{\delta_1-1}\cdots C_{\delta_n-1}\\
&\leq 256 K^{(5+k)i-1+\iota(\delta)}|\delta|^{2k}C_{\delta_1-1}\cdots C_{\delta_n-1}\, .
\end{align*}
The same bound holds if $\oplus$ is replaced by $\ominus$. Since each edge can be deformed by $2(d-1)$ plaquettes, 
\begin{align}
\dfrac{1}{m}\biggl|\sum_{\text{deform}\,s} a_{i-1,k}(s)\biggr|\leq 1024d K^{(5+k)i-1+\iota(\delta)} |\delta|^{2k} C_{\delta_1-1}\cdots C_{\delta_n-1}\, .\label{dd4}
\end{align}
By combining equation \eqref{recurcoeff} with inequalities \eqref{dd0}, \eqref{dd1}, \eqref{dd2}, \eqref{dd3} and \eqref{dd4}, we get
\begin{align*}
|a_{i,k}(s)|\leq (4K^{-3}+11K^{-1}+1024dK^{-1})K^{(5+k)i+\iota(\delta)}|\delta|^{2k}C_{\delta_1-1}\cdots C_{\delta_n-1}\, .
\end{align*}
Choosing $K$ so large that 
\begin{align}
4K^{-3}+11K^{-1}+1024dK^{-1}\leq 1 \label{boundK}
\end{align}
finishes the proof. 
\end{proof}
We are now ready to prove Theorem \ref{series}.
\begin{proof}[Proof of Theorem \ref{series}]
By Lemma \ref{coeff}, the given series is absolutely convergent if $|\beta|<K^{-(5+k)}$. Define
\begin{align*}
\psi_k(s):=\sum_{i=0}^{\infty} a_{i,k}(s)\beta^i \, .
\end{align*}
We will use induction on $k$ to prove that $\psi_k(s)=f_k(s)$ for all $s$. As mentioned before, the case $k=0$ has already been established in \cite{chatterjee15}. Take some $k\geq 1$ and assume $\psi_j(s)=f_j(s)$ for all $s\in \cs$ and $j<k$. By using the recursive equation \eqref{recurcoeff} and the convention that $a_{i,k}(s)=0$ for all $s$ when $i<0$, the above equation can be written as
\begin{align*}
\psi_k(s)&=\sum_{i=0}^{\infty} a_{i, k-1}(s)\beta^{i}+\dfrac{1}{m}\sum_{\text{twist}\, s}\sum_{i=0}^{\infty}a_{i,k-1}\beta^i+\dfrac{1}{m}\sum_{\text{merge}\, s}\sum_{i=0}^{\infty} a_{i,k-2}\beta^i\\
&\qquad+\dfrac{1}{m}\sum_{\text{split}\, s} \sum_{i=0}^{\infty} a_{i,k}\beta^i+\dfrac{1}{m}\sum_{\text{deform}\, s} \sum_{i=1}^{\infty} a_{i-1,k}\beta^{i}\,.
\end{align*}
Therefore,
\begin{align*}
\psi_k(s)&=\psi_{k-1}(s)+\dfrac{1}{m}\sum_{\text{twist}\, s}\psi_{k-1}+\dfrac{1}{m}\sum_{\text{merge}\, s}\psi_{k-2}+\dfrac{1}{m}\sum_{\text{split}\, s} \psi_{k}+\dfrac{\beta}{m}\sum_{\text{deform}\, s} \psi_{k}\,.
\end{align*}
By the induction hypothesis, this equation can be written as
\begin{align*}
\psi_k(s)&=f_{k-1}(s)+\dfrac{1}{m}\sum_{\text{twist}\, s}f_{k-1}+\dfrac{1}{m}\sum_{\text{merge}\, s}f_{k-2}+\dfrac{1}{m}\sum_{\text{split}\, s} \psi_{k}+\dfrac{\beta}{m}\sum_{\text{deform}\, s} \psi_{k}
\end{align*}
So $\psi_k$ satisfies equation \eqref{limequns}.  If $s$ is a non-null loop sequence with degree sequence $\delta=(\delta_1, \ldots, \delta_n)$ and $2|\beta|<K^{-(5+k)}$, then by Lemma \ref{coeff} and the fact that $a_{0,k}(s)=0$, 
\begin{align}
|\psi_k(s)|&\leq \sum_{i=0}^{\infty}|a_{i,k}(s)|\beta^i\leq \sum_{i=1}^{\infty}K^{(5+k)i+\iota(\delta)}|\delta|^{2k} C_{\delta_1-1}\cdots C_{\delta_n-1} \beta^i \notag\\
&\leq \sum_{i=1}^{\infty}(K^{(5+k)}\beta)^i K^{|\delta|}4^{|\delta|k}4^{|\delta|}\leq (4^{(1+k)}K)^{|\delta|}=(4^{(1+k)}K)^{|s|}\,. \label{boundfk}
\end{align}
Lastly, note that $\psi_k(\emptyset)=0$. 
Thus $\psi_k$ satisfies conditions (a), (b) and (c) of Theorem \ref{uniquefk}. Therefore $\psi_k=f_k$ for all sufficiently small $\beta$.
\end{proof}
The following result, which is a byproduct of the proof of Theorem \ref{series}, establishes an upper bound on the growth rate of $f_k(s)$ and hence completes the proof of part (iii) of Theorem~\ref{mainthmofpaper}.
\begin{prop}\label{growth}
Let $f_k$ be as in Theorem \ref{eqfuns}. Then for all $s$,
\[
|f_k(s)|\leq (2^{2k+13}d)^{|s|}\,.
\]
\end{prop}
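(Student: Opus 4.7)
The plan is to observe that Proposition \ref{growth} is essentially immediate from the estimate \eqref{boundfk} derived inside the proof of Theorem \ref{series}, once the constant $K(d)$ from Lemma \ref{coeff} is pinned down to an explicit value. Specifically, I would set $K(d) := 2^{11} d$ and verify the defining inequality \eqref{boundK}: the dominant term is
\[
\frac{1024d}{K} = \frac{1024d}{2048d} = \frac{1}{2},
\]
while the remaining two terms satisfy $11 K^{-1} + 4 K^{-3} \le 11/(2048d) + 4/(2048d)^3 < 1/2$ for every $d\ge 1$, so $4K^{-3} + 11K^{-1} + 1024dK^{-1} \le 1$ as required by \eqref{boundK}.

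With this choice of $K$, I would directly invoke the bound \eqref{boundfk} obtained in the course of proving Theorem \ref{series}: for every non-null loop sequence $s$,
\[
|f_k(s)| = |\psi_k(s)| \le (4^{1+k} K)^{|s|} = (2^{2k+2}\cdot 2^{11} d)^{|s|} = (2^{2k+13} d)^{|s|}.
\]
The case $s = \emptyset$ is trivial: $f_0(\emptyset) = 1$ and $f_k(\emptyset) = 0$ for $k \ge 1$, and in either case the right-hand side equals $1$, so the stated bound still holds.

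No genuine obstacle arises here; the proposition is purely a bookkeeping statement that packages the constants produced by the three-fold induction in Lemma \ref{coeff} and the geometric-series estimate \eqref{boundfk} into a clean exponential bound in $|s|$. The only thing to be careful about is that the absolute convergence of the series defining $\psi_k$ (and hence the identification $\psi_k = f_k$) requires $|\beta|$ small enough; this is already absorbed into the hypothesis $|\beta| \le \beta_0(d,k)$ inherited from Theorem \ref{eqfuns}, so no further restriction is needed.
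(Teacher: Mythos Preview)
Your proposal is correct and matches the paper's proof exactly: the paper's proof is the single sentence ``Since $K=2048d$ satisfies the inequality \eqref{boundK}, the result follows from inequality \eqref{boundfk},'' and you have spelled out precisely this argument, including the choice $K=2^{11}d=2048d$ and the arithmetic $(4^{1+k}K)^{|s|}=(2^{2k+13}d)^{|s|}$.
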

\begin{proof}
Since $K=2048d$ satisfies the inequality \eqref{boundK}, the result follows from inequality \eqref{boundfk}.
\end{proof}
Theorem \ref{series} has also the following important corollary, which gives an alternative recursion relation for $a_{i,k}(s)$.
\begin{cor}\label{symmcor}
For any $i,k,s$,
\begin{align*}
a_{i,k}(s) &= a_{i, k-1}(s)+\dfrac{1}{|s|}\sum_{s' \in \ftw^-(s)}a_{i, k-1}(s') - \dfrac{1}{|s|}\sum_{s' \in \ftw^+(s)}a_{i, k-1}(s')\\
&\qquad + \dfrac{1}{|s|}\sum_{s' \in \fst^-(s)}a_{i, k-2}(s') - \dfrac{1}{|s|}\sum_{s' \in \fst^+(s)}a_{i, k-2}(s')\\
&\qquad+\dfrac{1}{|s|}\sum_{s' \in \fs^-(s)} a_{i, k}(s')- \dfrac{1}{|s|}\sum_{s' \in \fs^+(s)} a_{i, k}(s')\\
&\qquad+\dfrac{1}{|s|}\sum_{s' \in \fd^-(s)} a_{i-1, k}(s')-\dfrac{1}{|s|}\sum_{s' \in \fd^+(s)} a_{i-1, k}(s')\,,
\end{align*}
where, as usual, $a_{i,k}(s)$ is interpreted as zero if $i<0$ or $k<0$. 
\end{cor}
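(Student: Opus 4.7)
The plan is to extract the symmetrized coefficient recursion directly from the symmetrized master loop equation \eqref{limiteq} by expanding $f_k$, $f_{k-1}$, and $f_{k-2}$ as power series in $\beta$ and matching coefficients. Fix $\beta$ small enough that Theorem \ref{series} applies simultaneously to $f_{k-2}$, $f_{k-1}$, and $f_k$, so that at every $s'$ appearing in the equation we have the absolutely convergent expansions $f_j(s') = \sum_{i\ge 0} a_{i,j}(s')\,\beta^i$. By Theorem \ref{eqf}, $f_k$ satisfies \eqref{limiteq} for every non-null $s$. The eight outer sums in \eqref{limiteq} run over the finite sets $\ftw^\pm(s)$, $\fst^\pm(s)$, $\fs^\pm(s)$, and $\fd^\pm(s)$, whose sizes are polynomially bounded in $|s|$, so they can be interchanged freely with the inner series in $i$ without any convergence issue.

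Substituting the series into \eqref{limiteq} and collecting powers of $\beta$, the twisting terms contribute
\[
\sum_{i\ge 0}\bigl(\sum_{s'\in \ftw^-(s)} a_{i,k-1}(s') - \sum_{s'\in \ftw^+(s)} a_{i,k-1}(s')\bigr)\beta^i,
\]
the merger terms contribute the analogous expression with $a_{i,k-2}$, and the splitting terms with $a_{i,k}$. The deformation terms carry an explicit prefactor $\beta$, which produces an index shift and yields
\[
\sum_{i\ge 0}\bigl(\sum_{s'\in \fd^-(s)} a_{i-1,k}(s') - \sum_{s'\in \fd^+(s)} a_{i-1,k}(s')\bigr)\beta^i
\]
under the convention $a_{-1,k}\equiv 0$. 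The left side is $|s|\sum_{i\ge 0} a_{i,k}(s)\,\beta^i$ and the term $|s|f_{k-1}(s)$ on the right contributes $|s|\sum_{i\ge 0} a_{i,k-1}(s)\,\beta^i$. Because both sides are absolutely convergent power series on an interval about $0$, uniqueness of power-series coefficients forces equality at each order of $\beta$; dividing by $|s|$ (valid whenever $s\ne \emptyset$) delivers exactly the recursion stated in Corollary \ref{symmcor}.

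The only case needing separate comment is $s=\emptyset$, where $|s|=0$ and all eight operation sets are empty; here the identity is understood vacuously, and the values $a_{i,k}(\emptyset)$ are fixed by the initial conditions of the definition in Section \ref{secseries}. No genuine obstacle arises in the argument: everything reduces to the mechanical substitution of one absolutely convergent power series into an equation with finitely many outer summands and a coefficient-by-coefficient comparison, both of which are immediate once Theorems \ref{series} and \ref{eqf} are granted.
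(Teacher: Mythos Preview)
Your argument is correct and follows exactly the approach taken in the paper: substitute the power series of Theorem~\ref{series} into the symmetrized loop equation~\eqref{limiteq} of Theorem~\ref{eqf} and equate coefficients of $\beta^i$. The paper's proof is the one-line version of what you have written out in detail, including your remarks on absolute convergence, the index shift from the $\beta$ prefactor on deformations, and the degenerate case $s=\emptyset$.
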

\begin{proof}
Simply take the power series expansion given in Theorem \ref{series}, apply it to both sides of the identity \eqref{limiteq} from Theorem \ref{eqf},  and equate the coefficients of $\beta^i$. 
\end{proof}


\section{Absolute convergence}
The goal of this section is to prove that infinite series in Theorem \ref{mainthmofpaper} is absolutely convergent. Recall the definitions of $w_\beta(X)$ and $\mx_{k}(s)$ from Section \ref{stringsec}.
\begin{thm}\label{absconv}
Take any $k\ge 0$. There exists $\beta_2(d,k)>0$ such that for any $|\beta|\leq \beta_2(d,k)$ and non-null loop sequence~$s$,
\begin{align}
\sum_{X\in \mx_k(s)}|w_{\beta}(X)|<\infty \,. \label{absconvineq}
\end{align}
\end{thm}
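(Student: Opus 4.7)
The plan is to reduce absolute convergence to a Catalan-product bound, modeled on Lemma~\ref{coeff}. Since every $X \in \mx_{i,k}(s)$ has exactly $i$ deformation steps, each contributing a factor of $\pm \beta/|s_j|$, the quantity
\[
b_{i,k}(s) := |\beta|^{-i} \sum_{X \in \mx_{i,k}(s)} |w_\beta(X)|
\]
does not depend on $\beta$, and the inequality \eqref{absconvineq} is equivalent to $\sum_{i \ge 0} b_{i,k}(s) |\beta|^i < \infty$ for small $|\beta|$.

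First I would decompose each vanishing trajectory according to its initial transition. Using the weight formula \eqref{weight} and the fact that the unique inaction step at $s$ returns to $s$ with weight $1$ (no $1/|s|$ factor), this yields $b_{0,0}(\emptyset)=1$, $b_{i,k}(\emptyset)=0$ otherwise, and for non-null $s$ the recursion
\begin{align*}
b_{i,k}(s) &= b_{i,k-1}(s) + \frac{1}{|s|} \sum_{s' \in \ftw(s)} b_{i,k-1}(s') + \frac{1}{|s|} \sum_{s' \in \fst(s)} b_{i,k-2}(s') \\
&\qquad + \frac{1}{|s|} \sum_{s' \in \fs(s)} b_{i,k}(s') + \frac{1}{|s|} \sum_{s' \in \fd(s)} b_{i-1,k}(s'),
\end{align*}
with the usual convention that $b_{i,k}$ vanishes for $i<0$ or $k<0$. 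This is precisely the symmetric recursion of Corollary~\ref{symmcor} with signed sums replaced by unsigned ones, together with the extra inaction term $b_{i,k-1}(s)$.

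Next I would prove by triple induction --- outer on $k$, middle on $i$, innermost on $\iota(s)$ --- that for some $K=K(d)$,
\[
b_{i,k}(s) \le K^{(5+k)i + \iota(\delta)} |\delta|^{2k} C_{\delta_1-1} \cdots C_{\delta_n-1}
\]
whenever $s$ has degree sequence $\delta$. The induction scheme is exactly that of Lemma~\ref{coeff}: the splitting term $b_{i,k}(s')$ is controlled by the innermost induction via Lemma~\ref{iota2}, the deformation term $b_{i-1,k}(s')$ by the middle induction on $i$, and the inaction, twist and merger terms by the outer induction on $k$. All arithmetic estimates carry over: Lemma~\ref{app1} bounds $C_{\delta_1+\delta_r-1}$ in the merger case, the Catalan convolution \eqref{catalan} telescopes the splitting case, and \eqref{catalan2} absorbs $C_{\delta_1+4}$ in the deformation case. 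The symmetric sums over all edges of $s$ (rather than just those containing the first edge) contain at most $|s|^2$ pairs of locations, so dividing by $|s|$ produces a factor of $|s|$, mirroring the bookkeeping of Lemma~\ref{coeff}. Once the bound is established, Catalan monotonicity $\prod_j C_{\delta_j-1} \le 4^{|\delta|}$ gives $b_{i,k}(s) \le M(d,k)^{|s|} K^{(5+k)i}$, so
\[
\sum_{X \in \mx_k(s)} |w_\beta(X)| \;\le\; M(d,k)^{|s|} \sum_{i \ge 0} \bigl(K^{5+k} |\beta|\bigr)^i \;<\; \infty
\]
as soon as $|\beta| < K^{-(5+k)} =: \beta_2(d,k)$.

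The main obstacle is the new inaction term, which has no counterpart in Lemma~\ref{coeff}. Since inaction keeps $s$ unchanged, the outer induction hypothesis supplies a bound with $|\delta|^{2(k-1)}$ in place of $|\delta|^{2k}$, so the inaction contributes a ratio of $K^{-i}|\delta|^{-2}$ relative to the target bound. Any non-null loop has length at least $4$, hence $|\delta| \ge 4$ and $|\delta|^{-2} \le 1/16$, which is precisely the slack needed to absorb the inaction. Together with the $K^{-1}$, $K^{-3}$, $dK^{-1}$ factors arising from the other four sums --- exactly those controlled by the constraint \eqref{boundK} in the proof of Lemma~\ref{coeff} --- a sufficiently large choice of $K(d)$ closes the induction. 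Verifying this multiplicative accounting while tracking the Catalan-product monotonicity $\prod_j C_{\delta(s')_j - 1} \le \prod_j C_{\delta_j-1}$ across each operation type (immediate for twistings and splittings, by Lemma~\ref{app1} for mergers, and by \eqref{catalan2} for deformations) is the most delicate part of the argument.
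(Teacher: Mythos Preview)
Your approach is essentially the paper's: introduce the nonnegative coefficients $b_{i,k}(s)$, establish the recursion~\eqref{recurb}, prove the Catalan bound of Lemma~\ref{coeffb} by triple induction, and sum over $i$. The paper orders the first two steps the other way round --- it defines $b_{i,k}$ by the recursion and then shows in Lemma~\ref{ansconvb} that this equals the trajectory sum --- but the content is identical. One small correction: the inaction term is not new; it is already the first term in~\eqref{recurcoeff} and is bounded at~\eqref{dd0} in the proof of Lemma~\ref{coeff}, so it is not the main obstacle.

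There is, however, a subtle gap at $i=0$. In the proofs of Lemmas~\ref{coeff} and~\ref{coeffb} the base case $i=0$ is trivial because $a_{0,k}(s)$ and the paper's $b_{0,k}(s)$ are \emph{declared} to vanish for $k\ge1$. Your $b_{0,k}(s)$, defined as a trajectory sum, is not zero in general --- for instance two copies of a plaquette vanish via a single negative merger, so $b_{0,2}((p,p))>0$ --- and you cannot invoke that base case. If you try to run the merger estimate at $i=0$ you get a factor $K^{(5+k-2)\cdot0+\iota(\delta)+1}=K^{\iota(\delta)+1}$ against a target of $K^{\iota(\delta)}$, a ratio of $K$ rather than $K^{-1}$, and the induction does not close. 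The fix is easy: Lemma~\ref{finite} gives $|\mx_{0,k}(s)|<\infty$, so the $i=0$ term in $\sum_{i\ge0} b_{i,k}(s)|\beta|^i$ is a single finite number regardless of any Catalan bound. Either invoke this directly for $i=0$ and prove the Catalan bound only for $i\ge1$, or strengthen the exponent (e.g.\ replace $(5+k)i$ by $(5+k)(i+1)$) so that the merger term acquires the needed $K^{-1}$ even at $i=0$.
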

Inductively define a collection of numbers $\{b_{i,k}(s):i,k\ge 0, s\in \cs\}$ as follows: let $b_{0,0}(\emptyset)=1$, $b_{i, 0}(\emptyset)=0$ for $i\geq 1$ and $b_{0,0}(s)=0$ for any non-null loop sequence $s$. If $k\geq 1$, then let $b_{i,k}(\emptyset)=0$ for all $i\geq 0$ and $b_{0,k}(s)=0$ for all $s\in \cs$. Suppose that $b_{i',k'}(s')$ has been defined for all triples $(s', i', k')$ such that either $k'<k$, or $k'=k$ and $i'<i$, or  $k'=k$, $i'=i$ and $\iota(s')<\iota(s)$. Then define
\begin{align}
b_{i,k}(s)=b_{i, k-1}(s)&+\dfrac{1}{|s|}\sum_{s' \in \ftw(s)} b_{i,k-1}(s')+\dfrac{1}{|s|}\sum_{s' \in \fst(s)} b_{i,k-2}(s')\notag \\
&+\dfrac{1}{|s|}\sum_{s' \in \fs(s)} b_{i,k}(s')+\dfrac{1}{|s|}\sum_{s'\in \fd(s)} b_{i-1,k}(s')\label{recurb}
\end{align}
with the convention that $b_{i,k}(s)=0$ for all $s$ if $i<0$ or $k<0$. Observe that by Lemma \ref{iota2} the fourth term on the right has already been defined at the time of defining $b_{i,k}(s)$.
\begin{lmm}\label{coeffb}
There is a constant $K(d)\geq 4$ such that if $s$ is a loop sequence with degree sequence $\delta = (\delta_1,\ldots,\delta_n)$, then
\begin{align}
0\leq b_{i,k}(s)\leq K(d)^{(5+k)i+\iota(\delta)}|\delta|^{2k}C_{\delta_1-1}C_{\delta_2-1}\cdots C_{\delta_n-1} \label{boundbij}
\end{align}
where $C_i$ is the $i^{\textup{th}}$ Catalan number. The product of Catalan numbers is interpreted as $1$ when~$s=\emptyset$.
\end{lmm}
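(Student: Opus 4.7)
The plan is to prove non-negativity and the upper bound simultaneously by a three-fold induction that mirrors the proof of Lemma \ref{coeff}. Non-negativity is immediate: the base cases of \eqref{recurb} are $0$ or $1$, and each term on the right-hand side of \eqref{recurb} is a non-negative combination of previously defined $b$ values, so a trivial induction yields $b_{i,k}(s)\ge 0$.

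For the upper bound, I would induct first on $k$, then on $i$, and finally on $\iota(\delta)$. The base cases ($s=\emptyset$, or $i=0$, or $k=0$) are handled directly from the definition; in particular the case $k=0$ reduces exactly to Lemma \ref{coefff0}, since the twisting and merger terms in \eqref{recurb} vanish. For the inductive step, I would bound each of the five terms on the right-hand side of \eqref{recurb} using the same machinery as in the proof of Lemma \ref{coeff}, modified for the symmetrized recursion. The inaction term $b_{i,k-1}(s)$ is dominated by $K^{-1}|\delta|^{-2}$ times the target bound by the induction hypothesis on $k$. For the twisting term, the total number of twistings of $s$ is bounded by $\sum_r |l_r|^2 \le |s|^2$; combined with Lemma \ref{twist} (which gives $|\delta(s')|\le|\delta|$ and $\iota(\delta(s'))\le\iota(\delta)$) and division by $|s|$, this produces a contribution of the right form with a spare factor absorbable into $K^{-1}$. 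The merger term is handled analogously, this time using Lemma \ref{merger} together with Lemma \ref{app1} to bound the Catalan number $C_{\delta_1+\delta_r-1}$ arising when two component loops are merged. For the splitting term, I would mimic the estimates \eqref{cc1}--\eqref{cc2} in the proof of Lemma \ref{coeff}, using Lemmas \ref{split1}, \ref{split2}, \ref{iota2} and the Catalan recursion \eqref{catalan} to telescope the sum over splitting locations. For the deformation term, Lemma \ref{deform}, the inequality \eqref{catalan2}, and the bound $|\cp^+(e)|\le 2(d-1)$ give the required estimate essentially verbatim from Lemma \ref{coeff}.

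Collecting the five bounds yields an inequality of the form $b_{i,k}(s) \le C(d)\, K^{(5+k)i+\iota(\delta)}|\delta|^{2k}\prod_r C_{\delta_r-1}$ for some combinatorial constant $C(d)$; choosing $K$ large enough (analogously to \eqref{boundK}) so that $C(d)\le 1$ closes the induction. The main technical obstacle will be the bookkeeping on the $|\delta|^{2k}$ factor: the counting arguments for twisting and merger generate extra powers of $|s|$ that have to be absorbed against the $|\delta|^{2(k-1)}$ and $|\delta|^{2(k-2)}$ factors from the inductive hypothesis. A minor additional subtlety is that the sums in \eqref{recurb} range over all of $\ftw(s)$, $\fst(s)$, $\fs(s)$, and $\fd(s)$ rather than being restricted to operations at the first edge of the first loop, so the combinatorial constants differ slightly from those appearing in Lemma \ref{coeff}; however, the division by $|s|$ (instead of by $m$) in \eqref{recurb} compensates, since $m\le|s|$ and the extra locations contribute at most an additional factor of $|s|$ which is then absorbed into the $|\delta|^{2k}$ budget.
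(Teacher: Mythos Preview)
Your proposal is correct and follows essentially the same approach as the paper's proof: three-fold induction on $k$, then $i$, then $\iota(s)$, bounding each of the five terms in \eqref{recurb} using Lemmas \ref{twist}, \ref{merger}, \ref{deform}, \ref{split1}, \ref{split2}, \ref{app1} and the Catalan identities \eqref{catalan}, \eqref{catalan2}, and then choosing $K$ large enough to close the induction. One small imprecision: the $k=0$ case does not ``reduce exactly to Lemma~\ref{coefff0}'' since the recursion \eqref{recurb} for $b_{i,0}$ is the symmetrized one (division by $|s|$, sums over all of $\fs(s)$ and $\fd(s)$) rather than the unsymmetrized one defining $a_{i,0}$; it requires the same \emph{argument} as Lemma~\ref{coefff0} but applied to \eqref{recurb}, which is exactly what the splitting and deformation estimates you outline (and the paper carries out at \eqref{bb3}, \eqref{bb4}) provide when specialized to $k=0$.
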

\begin{proof}
The proof is similar to the proof of Lemma \ref{coeff}. The number $K=K(d)$ will be chosen at the end of the proof. Observe that $b_{i,k} \geq 0$ is obvious from the recursive definition. To prove the upper bound we will again use three-fold induction: first on $k$, then on $i$ and then on $\iota(s)$. 

The inequality \eqref{boundbij} is trivially true if $k=0$. Take $k\geq 1$ and suppose that the inequality holds for all triples $(s', i', k')$ such that $k'<k$. Since $b_{0,k}(s)=0$ for all $s\in \cs$, \eqref{boundbij} holds for $(s,0,k)$ for all $s$. 

Fix $i\geq 1$ and assume that \eqref{boundbij} holds for $(s, i', k)$ for all $i'<i$ and all $s\in \cs$. We will prove that it holds for $(s, i, k)$ by induction on $\iota(s)$. If $s=\emptyset$, then $b_{i,k}(s)=0$ and inequality is obviously true. Let $s\neq \emptyset$ and suppose that bound on $b_{i,k}(s')$ holds for all $s'$ with $\iota(s')<\iota(s)$. If $\delta=(\delta_1, \ldots, \delta_n)$ is the degree sequence of $s$, then by the induction hypothesis,
\begin{align}
b_{i, k-1}(s)&\leq  K^{(5+k-1)i+\iota(\delta)}|\delta|^{2(k-1)}C_{\delta_1-1}C_{\delta_2-1}\cdots C_{\delta_n-1}\notag\\
&\le K^{(5+k)i-1+\iota(\delta)}|\delta|^{2k}C_{\delta_1-1}C_{\delta_2-1}\cdots C_{\delta_n-1}\,. \label{bb0}
\end{align}
Let $\ftw_r(s)$ be the set of all loop sequences obtained by applying a twisting operation to the $r^{\textup{th}}$ component loop of $s$. Similarly define $\fs^-_r(s)$, $\fs^+_r(s)$, $\fd^-_r(s)$ and $\fd^+_r(s)$. By Lemma \ref{twist}, if $s' \in \ftw(s)$ then $|\delta(s')|\leq |\delta|$ and $|\iota(s')|\leq |\iota(\delta)|$. Thus by the induction hypothesis,
\begin{align}
\dfrac{1}{|s|}\sum_{s' \in \ftw(s)} b_{i,k-1}(s') &= \dfrac{1}{|s|}\sum_{r=1}^{n}\sum_{s'\in \ftw_r(s)} b_{i,k-1}(s')\notag\\
&\leq \dfrac{1}{|s|}\sum_{r=1}^{n}|\ftw_r(s)| K^{(5+k-1)i+\iota(\delta)}|\delta|^{2(k-1)}C_{\delta_1-1}C_{\delta_2-1}\cdots C_{\delta_n-1}\notag\\
&\leq \dfrac{1}{|s|}\sum_{r=1}^{n}\delta_r^2 K^{(5+k-1)i+\iota(\delta)}|\delta|^{2(k-1)}C_{\delta_1-1}C_{\delta_2-1}\cdots C_{\delta_n-1}\notag\\
&\leq K^{(5+k)i-1+\iota(\delta)}|\delta|^{2k}C_{\delta_1-1}C_{\delta_2-1}\cdots C_{\delta_n-1}\,.\label{bb1}
\end{align}
By Lemma \ref{merger}, if $s'\in \fst(s)$ then $|\delta(s')|\leq |\delta|$ and $\iota(s')<\iota(\delta)+1$. So by the induction hypothesis,
\begin{align}
&\dfrac{1}{|s|}\sum_{s'\in\fst^{-}(s)}b_{i,k-2}(s')\notag\\
&=\dfrac{1}{|s|}\sum_{1\leq u<v\leq n}\sum_{x\in C_{u}, \, y\in C_{v}}b_{i,k-2}(l_1, \dots, l_{u-1}, l_{u} \ominus_{x,y} l_{v},l_{u+1},\dots, l_{v-1}, l_{v+1}, \dots,l_n)\notag \\
&\qquad +\dfrac{1}{|s|}\sum_{1\leq v<u\leq n}\sum_{x\in C_{u}, \, y\in C_{v}}b_{i,k-2}(l_1, \dots, l_{v-1},l_{v+1},\dots, l_{u-1}, l_{u} \ominus_{x,y} l_{v}, l_{u+1}, \dots,l_n) \notag\\
&\leq \dfrac{2}{|s|}\sum_{1\leq u<v\leq n}\sum_{x\in C_{u}, \, y\in C_{v}} K^{(5+k-2)i+\iota(\delta)+1}|\delta|^{2(k-2)}C_{\delta_u+\delta_v-1} \prod_{\substack{1\leq i\leq n\\
i \not\in \{u, v\}}} C_{\delta_{i}-1}\notag
\end{align}
Applying Lemma \ref{app1}, this gives
\begin{align}
&\dfrac{1}{|s|}\sum_{s'\in\fst^{-}(s)}b_{i,k-2}(s')\notag\\
&\leq \dfrac{2}{|s|}\sum_{1\leq u<v\leq n}\sum_{x\in C_{u}, \, y\in C_{v}} K^{(5+k)i-1+\iota(\delta)}|\delta|^{2k-4}(\delta_u+\delta_v)^2C_{\delta_u-1}C_{\delta_v-1} \prod_{\substack{1\leq i\leq n\\
i \not\in \{u, v\}}} C_{\delta_{i}-1}\notag\\
&\leq \dfrac{2}{|s|}\sum_{1\leq u<v\leq n} \delta_u\delta_v(\delta_u+\delta_v)^2 K^{(5+k)i-1+\iota(\delta)}|\delta|^{2k-4}\prod_{1\leq i\leq n} C_{\delta_{i}-1}\notag\\
&\leq K^{(5+k)i-1+\iota(\delta)}|\delta|^{2k}C_{\delta_1-1}\cdots  C_{\delta_n-1}\,.\notag
\end{align}
The same bound also holds for the sum over positive mergers. Therefore,
\begin{align}
\sum_{s' \in \fst(s)}b_{i,k-2}(s')\leq 2K^{(5+k)i-1+\iota(\delta)}|\delta|^{2k}C_{\delta_1-1}C_{\delta_2-1}\cdots C_{\delta_n-1}\,.\label{bb2}
\end{align}
By the induction hypothesis and Lemma \ref{split2}, 
\begin{align}
\dfrac{1}{|s|}\sum_{s'\in \fs^{-}(s)}b_{i,k}(s')&=\frac{1}{|s|}\sum_{r=1}^{n}\sum_{s'\in \fs^{-}_r(s)} b_{i, k}(s') \nonumber\\
&\leq \frac{1}{|s|}\sum_{r=1}^{n}\sum_{1\leq x\ne y\leq \delta_r} b_{i, k}(l_1,\dots,l_{r-1},\times_{x,y}^1 l_r,\times_{x,y}^2l_r,l_{r+1},\ldots, l_n) \nonumber\\
&\leq \frac{1}{|s|}\sum_{r=1}^{n}\sum_{1\leq x\ne y\leq \delta_r} K^{(5+k)i+\iota(\delta)-3} |\delta|^{2k} C_{\delta_r - |x-y|-2} C_{|x-y|-2}\prod_{\substack{1\leq i\leq n\\i\ne r}}C_{\delta_i-1}\nonumber\\
&\leq \frac{2}{|s|}\sum_{r=1}^{n}\sum_{x=1}^{\delta_r}\sum_{u=2}^{\delta_r-2} K^{(5+k)i+\iota(\delta)-3} |\delta|^{2k} C_{\delta_r -u-2} C_{u-2}\prod_{\substack{1\leq i\leq n\\i\ne r}}C_{\delta_i-1}\,.\nonumber
\end{align}
Using \eqref{catalan}, we get
\begin{align}
\dfrac{1}{|s|}\sum_{s'\in \fs^{-}(s)}b_{i,k}(s') &\leq \frac{1}{|s|}\sum_{r=1}^{n}\sum_{x=1}^{\delta_r}K^{(5+k)i+\iota(\delta)-3} |\delta|^{2k} C_{\delta_r -3}\prod_{\substack{1\leq i\leq n\\i\ne r}}C_{\delta_i-1}\nonumber\\
&\leq \frac{2}{|s|}\sum_{r=1}^{n}\delta_r K^{(5+k)i+\iota(\delta)-3} |\delta|^{2k} \prod_{1\leq i\leq n}C_{\delta_i-1}\nonumber\\
&=2K^{(5+k)i+\iota(\delta)-3} |\delta|^{2k} C_{\delta_1-1}\cdots C_{\delta_n-1}\, .\label{bb3a}
\end{align}
Similarly, by the induction hypothesis and Lemma \ref{split1}, 
\begin{align}
\dfrac{1}{|s|}\sum_{s'\in \fs^{+}(s)}b_{i,k}(s')&=\frac{1}{|s|}\sum_{r=1}^{n}\sum_{s'\in \fs^{+}_r(s)} b_{i, k}(s') \nonumber\\
&\leq \frac{1}{|s|}\sum_{r=1}^{n}\sum_{1\leq x\ne y\leq \delta_r} b_{i, k}(l_1,\dots,l_{r-1},\times_{x,y}^1 l_r,\times_{x,y}^2l_r,l_{r+1},\ldots, l_n) \nonumber\\
&\leq \frac{1}{|s|}\sum_{r=1}^{n}\sum_{1\leq x\ne y\leq \delta_r} K^{(5+k)i+\iota(\delta)-1} |\delta|^{2k} C_{\delta_r - |x-y|-1} C_{|x-y|-1}\prod_{\substack{1\leq i\leq n\\i\ne r}}C_{\delta_i-1}\nonumber\\
&\leq \frac{2}{|s|}\sum_{r=1}^{n}\sum_{x=1}^{\delta_r}\sum_{u=1}^{\delta_r-1} K^{(5+k)i+\iota(\delta)-1} |\delta|^{2k} C_{\delta_r-u-1} C_{u-1}\prod_{\substack{1\leq i\leq n\\i\ne r}}C_{\delta_i-1}\,.\nonumber
\end{align}
Again, applying \eqref{catalan}, we get
\begin{align}
\dfrac{1}{|s|}\sum_{s'\in \fs^{+}(s)}b_{i,k}(s') &\leq \frac{2}{|s|}\sum_{r=1}^{n}\sum_{x=1}^{\delta_r}K^{(5+k)i+\iota(\delta)-1} |\delta|^{2k} C_{\delta_r -1}\prod_{\substack{1\leq i\leq n\\i\ne r}}C_{\delta_i-1}\nonumber\\
&\leq \frac{2}{|s|}\sum_{r=1}^{n}\delta_r K^{(5+k)i+\iota(\delta)-1} |\delta|^{2k} \prod_{1\leq i\leq n}C_{\delta_i-1}\nonumber\\
&=2K^{(5+k)i+\iota(\delta)-1} |\delta|^{2k} C_{\delta_1-1}\cdots C_{\delta_n-1}\, .\label{bb3b}
\end{align}
By combining \eqref{bb3a} and \eqref{bb3b} we get
\begin{align}
\sum_{s'\in \fs(s)} b_{i,k}(s') \leq (2K^{-3}+2K^{-1}) K^{(5+k)i+\iota(\delta)} |\delta|^{2k} C_{\delta_1-1}\cdots C_{\delta_n-1}\, .\label{bb3}
\end{align}
Take any $1\le r\le n$, any location $x$ in $l_r$, and any plaquette $p$ that can be merged with $l_r$ at location $x$. Then by Lemma \ref{deform} and the induction hypothesis,
\begin{align*}
&b_{i-1,k}(l_1,\dots, l_{r-1}, l_r\oplus_{x} p , l_{r+1},\ldots, l_n) \\
&\le K^{(5+k)(i-1)+\iota(\delta)+4} (|\delta|+4)^{2k} C_{\delta_r+3}\prod_{\substack{1\leq i\leq n\\i\neq r}} C_{\delta_i-1}\,.
\end{align*}
 Since each edge can be deformed by $2(d-1)$ plaquettes, this gives
\begin{align*}
\dfrac{1}{|s|}\sum_{s'\in \fd^+(s)}b_{i-1,k}(s')&=\dfrac{1}{|s|}\sum_{r=1}^{n}\sum_{s'\in \fd^+_r(s)}b_{i-1,k}(s')\\
&\le \dfrac{2d}{|s|}\sum_{r=1}^{n}\delta_r K^{(5+k)(i-1)+\iota(\delta)+4} (|\delta|+4)^{2k} C_{\delta_r+3}\prod_{\substack{1\leq i\leq n\\i\neq r}} C_{\delta_i-1}\,.
\end{align*}
Applying the inequality \eqref{catalan2} and the facts that $|\delta|\ge 4$ and $K\ge 4$, we get
\begin{align*}
\dfrac{1}{|s|}\sum_{s'\in \fd^+(s)}b_{i-1,k}(s') &\le \dfrac{2d}{|s|}\sum_{r=1}^{n}\delta_r K^{(5+k)i-1-k+\iota(\delta)}(2|\delta|)^{2k}4^{4}C_{\delta_r-1}\prod_{\substack{1\leq i\leq n\\i\neq r}} C_{\delta_i-1} \nonumber\\
&\le \dfrac{512 d}{|s|}\sum_{r=1}^{n}\delta_r K^{(5+k)i-1+\iota(\delta)}|\delta|^{2k}\prod_{1\leq i\leq n} C_{\delta_i-1} \nonumber\\
&= 512d K^{(5+k)i-1+\iota(\delta)}|\delta|^{2k}C_{\delta_1-1}\cdots C_{\delta_n-1}\, .
\end{align*}
The same bound holds if $\oplus$ is replaced by $\ominus$. Therefore,
\begin{align}
\dfrac{1}{|s|}\sum_{s'\in \fd(s)} b_{i-1,k}(s')\leq 1024d K^{(5+k)i-1+\iota(\delta)} |\delta|^{2k} C_{\delta_1-1}\cdots C_{\delta_n-1}\, .\label{bb4}
\end{align}
By using the inequalities \eqref{bb0}, \eqref{bb1}, \eqref{bb2}, \eqref{bb3} and \eqref{bb4} in the definition \eqref{coeffb} of $b_{i,k}(s)$, we get
\begin{align*}
b_{i,k}(s)\leq (2K^{-3}+6K^{-1}+1024dK^{-1})K^{(5+k)i+\iota(\delta)}|\delta|^{2k}C_{\delta_1-1}\cdots C_{\delta_n-1}\, .
\end{align*}
The induction step is completed by choosing $K$ so large that the number in the bracket is $\leq 1$.
\end{proof}
\begin{lmm}\label{finite}
For any non-null loop sequence $s$ and nonnegative integers $i, a, b, c$, the set of trajectories $\mx_{i,a, b, c}(s)$ is a finite set. In particular, $\mx_{i, k}(s)$ is finite for any nonnegative $i$ and $k$.
\end{lmm}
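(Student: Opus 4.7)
The plan is to show finiteness of $\mx_{i,a,b,c}(s)$ in two steps: first, bound the total length of every trajectory in $\mx_{i,a,b,c}(s)$ in terms of $s,i,a,b,c$; second, bound the number of one-step successors of any loop sequence appearing in such a trajectory. These two bounds together will give $|\mx_{i,a,b,c}(s)|<\infty$, and hence $\mx_{i,k}(s) = \bigcup_{a+2b+c=k}\mx_{i,a,b,c}(s)$ is finite as a finite union.

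The crux of the length bound will be to control the number $S$ of splittings in the trajectory. I would combine Lemmas \ref{twist}, \ref{merger}, \ref{deform}, and \ref{iota2}, together with the trivial observation that inaction preserves $\iota$, to see that a single operation changes the integer-valued index $\iota$ by at most $0$ for twisting or inaction, $+1$ for merger, $+4$ for deformation, and by at most $-1$ for splitting. For any vanishing trajectory $X=(s_0,\ldots,s_n)\in \mx_{i,a,b,c}(s)$, since $\iota(s_0)=\iota(s)$ and $\iota(s_n)=\iota(\emptyset)=0$, telescoping should yield
\[
-\iota(s) \;=\; \sum_{j=0}^{n-1}\bigl(\iota(s_{j+1}) - \iota(s_j)\bigr) \;\leq\; 4i + b - S,
\]
so $S\leq \iota(s)+4i+b$, and the total trajectory length $n = i+a+b+c+S$ is at most $N := 5i+a+2b+c+\iota(s)$, a quantity depending only on $s,i,a,b,c$.

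For the branching bound, I would next note that by the same four lemmas together with Lemmas \ref{split1} and \ref{split2}, only deformations can increase $|s_j|$, and each such deformation adds at most $4$ edges; hence every loop sequence appearing in $X$ has length at most $L := |s|+4i$. From any loop sequence of length $\leq L$, the number of possible one-step transitions is manifestly finite: each twisting, splitting, or merger is indexed by a pair of edge-locations ($O(L^2)$ options each), each deformation by an edge-location together with one of at most $2(d-1)$ plaquettes containing that edge ($O(dL)$ options), and inaction contributes a single choice. Letting $M = M(L,d)$ denote this bound, one obtains $|\mx_{i,a,b,c}(s)|\leq M^N<\infty$.

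The conceptual heart of the argument is the splitting bound in Step 1; since it follows directly from telescoping with the four index-change lemmas already established in Section \ref{prelim}, no substantial obstacle should arise in executing the plan.
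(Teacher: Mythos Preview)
Your proposal is correct and follows essentially the same approach as the paper: the key step in both is bounding the number of splittings via the telescoping index inequality $S\le \iota(s)+4i+b$, using Lemmas \ref{twist}, \ref{merger}, \ref{deform}, and \ref{iota2}. Your Step~2 (the explicit branching bound via $|s_j|\le |s|+4i$) makes precise what the paper leaves implicit when it jumps directly from the splitting bound to finiteness.
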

\begin{proof}
Note that $i$ deformations, $a$ twistings, $b$ mergers and $c$ inactions can increase $\iota(s)$ by at most $4i+b$ (by Lemmas \ref{twist},  \ref{merger} and \ref{deform}). Since splitting reduces the index by at least one (Lemma \ref{iota2}), each $X\in\mx_{i, a, b, c}(s)$ can have at most $\iota(s)+4i+b$ splitting operations. Therefore $\mx_{i, a, b, c}(s)$ is finite. Since there are only finitely many nonnegative integers $a$, $b$, $c$ such that $a+2b+c=k$ we deduce that $\mx_{i,k}(s)$ is also finite.
\end{proof}
For each $\beta$, $i$, $k$ and $s$, define
\begin{align*}
S_{\beta, i, k}(s)&:=\sum_{X\in \mx_{i, k}(s)}|w_{\beta}(X)|
\end{align*}
Note that $S_{\beta, i, k}$ is well-defined since $\mx_{i, k}(s)$ is  a finite set by the above lemma. 
\begin{lmm}\label{ansconvb}
If $|\beta|$ is sufficiently small (depending only on $d$ and $k$), then 
\begin{align}
S_{\beta, i, k}(s)&=b_{i,k}(s)|\beta|^i\,. \label{sbik}
\end{align}
\end{lmm}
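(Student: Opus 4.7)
The plan is to establish \eqref{sbik} by a three-fold induction on $(k,i,\iota(s))$ that mirrors the inductive definition of $b_{i,k}$: outer induction on $k$, middle on $i$, inner on $\iota(s)$. Both sides of \eqref{sbik} are finite nonnegative numbers---the left side by Lemma \ref{finite} together with the definition of $w_\beta$, and the right side by its recursive construction---so the proof reduces to showing that $S_{\beta,i,k}$ obeys a recursion of the same shape as $|\beta|^i \cdot b_{i,k}$.

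The central step is a first-transition decomposition. For non-null $s$, I partition $\mx_{i,k}(s)$ according to the type of the initial move $s=s_0\to s_1$ and observe that truncation $X\mapsto X':=(s_1,\ldots,s_n)$ is a bijection onto the appropriate smaller index set. Inaction sends $X'$ to $\mx_{i,k-1}(s)$ with weight ratio $1$; twisting to $\mx_{i,k-1}(s_1)$ with $s_1\in\ftw(s)$ and ratio $1/|s|$; splitting to $\mx_{i,k}(s_1)$ with $s_1\in\fs(s)$ and ratio $1/|s|$; merger to $\mx_{i,k-2}(s_1)$ with $s_1\in\fst(s)$ and ratio $1/|s|$; and deformation to $\mx_{i-1,k}(s_1)$ with $s_1\in\fd(s)$ and ratio $|\beta|/|s|$. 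The shift in operation counts makes each truncated trajectory land in the advertised $\mx$-set, since inaction and twisting reduce the count $c$ or $a$ by one (hence $k$ by one), merger reduces $b$ by one (hence $k$ by two), deformation reduces $i$ by one, and splitting leaves $(i,a,b,c)$ unchanged. Summing these contributions yields
\begin{align*}
S_{\beta,i,k}(s) &= S_{\beta,i,k-1}(s) + \frac{1}{|s|}\sum_{s'\in\ftw(s)} S_{\beta,i,k-1}(s') + \frac{1}{|s|}\sum_{s'\in\fst(s)} S_{\beta,i,k-2}(s') \\
&\quad + \frac{1}{|s|}\sum_{s'\in\fs(s)} S_{\beta,i,k}(s') + \frac{|\beta|}{|s|}\sum_{s'\in\fd(s)} S_{\beta,i-1,k}(s').
\end{align*}
Every $S$-term on the right lies at a strictly smaller point of the induction ordering: inaction, twisting and merger lower $k$; deformation keeps $k$ but lowers $i$; and splitting keeps both $k$ and $i$ but strictly lowers $\iota$ by Lemma \ref{iota2}. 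Applying the inductive hypothesis to replace each $S_{\beta,i',k'}(s')$ by $b_{i',k'}(s')|\beta|^{i'}$, pulling out a common factor of $|\beta|^i$, and comparing with the defining recursion \eqref{recurb} yields $S_{\beta,i,k}(s)=b_{i,k}(s)|\beta|^i$, completing the induction step.

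The base cases are handled directly: when $s=\emptyset$ the only possibility is $(i,k)=(0,0)$ with the empty trajectory of weight $1$, matching $b_{0,0}(\emptyset)=1$, while $\mx_{i,k}(\emptyset)$ is empty otherwise, matching $b_{i,k}(\emptyset)=0$; the case $k=0$ for non-null $s$ was established in \cite{chatterjee15}. I expect the main obstacle to be purely combinatorial bookkeeping rather than analysis: verifying that the first-transition map is a genuine bijection and tracking how the tuple $(i,a,b,c)$ changes under each of the five operation types so that $X'$ lies in the claimed $\mx$-set. Notably, no smallness hypothesis on $|\beta|$ is strictly needed for the identity itself, since both sides are finite sums matching term by term; it is recorded in the statement presumably for uniformity with the subsequent argument in Theorem \ref{absconv}.
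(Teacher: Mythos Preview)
Your proposal is correct and follows essentially the same approach as the paper: a three-fold induction on $(k,i,\iota(s))$ together with a first-transition decomposition of $\mx_{i,k}(s)$, yielding the same recursion for $S_{\beta,i,k}$ as the defining recursion \eqref{recurb} for $b_{i,k}$. Your observation that no smallness hypothesis on $|\beta|$ is actually used in this lemma is also correct.
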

\begin{proof}
For $k=0$, this result was proved in \cite{chatterjee15}. Take $k\geq 1$ and a triple $(s,i,k)$. Suppose that \eqref{sbik} holds for all $(s', i', k')$ with $k'<k$. We will show that it holds for $(s,i,k)$. 

First, suppose that $i=0$. We will use induction on $\iota(s)$ to prove the claim for the triple $(s,0,k)$. If $s=\emptyset$ then both sides are zero. Take some $s\neq \emptyset$ and suppose that \eqref{sbik} holds for all $(s', 0, k)$ with $\iota(s')<\iota(s)$. Note that any $X\in \mx_{0, k}(s)$ can be written as $X=(s, X')$ where either $X' \in \mx_{0, k-1}(s)$, or $X' \in \mx_{0, k-1}(s')$ for some $s'\in \ftw(s)$, or $X' \in \mx_{0, k-2}(s')$ for some $s'\in \fst(s)$, or $X' \in \mx_{0, k}(s')$ for some $s'\in \fs(s)$. Therefore 
\begin{align*}
S_{\beta, 0, k}(s)
&=\sum_{\substack{X=(s,X')\\X'\in \mx_{0, k-1}(s)}}|w_{\beta}(X)|+\sum_{s' \in \ftw(s)}\sum_{\substack{X=(s,X')\\X'\in \mx_{0, k-1}(s')}}|w_{\beta}(X)|\\
&\qquad+\sum_{s' \in \fst(s)}\sum_{\substack{X=(s,X')\\X'\in \mx_{0, k-2}(s')}}|w_{\beta}(X)|+\sum_{s' \in \fs(s)}\sum_{\substack{X=(s,X')\\X'\in \mx_{0, k}(s')}}|w_{\beta}(X)|\,.
\end{align*}
By the definition of $w_\beta(X)$,
\begin{align*}
\sum_{\substack{X=(s,X')\\X'\in \mx_{0, k-1}(s)}}|w_{\beta}(X)|&=\sum_{X'\in \mx_{0, k-1}(s)}|w_{\beta}(X')|\,.
\end{align*}
Similarly, 
\begin{align*}
\sum_{s' \in \ftw(s)}\sum_{\substack{X=(s,X')\\X'\in \mx_{0, k-1}(s')}}|w_{\beta}(X)|&=\dfrac{1}{|s|}\sum_{s' \in \ftw(s)}\sum_{X'\in \mx_{0, k-1}(s')}|w_{\beta}(X')|\, ,
\end{align*}
\begin{align*}
\sum_{s' \in \fst(s)}\sum_{\substack{X=(s,X')\\X'\in \mx_{0, k-2}(s')}}|w_{\beta}(X)|&=\dfrac{1}{|s|}\sum_{s' \in \fst(s)}\sum_{X'\in \mx_{0, k-2}(s')}|w_{\beta}(X')|\, ,
\end{align*}
and
\begin{align*}
\sum_{s' \in \fs(s)}\sum_{\substack{X=(s,X')\\X'\in \mx_{0, k}(s')}}|w_{\beta}(X)|&=\dfrac{1}{|s|}\sum_{s' \in \fs(s)}\sum_{X'\in \mx_{0, k}(s')}|w_{\beta}(X')|\,.
\end{align*}
Putting together the last five displays and applying the induction hypothesis, we get
\begin{align*}
S_{\beta, 0, k}(s)&=S_{\beta, 0, k-1}(s)+\dfrac{1}{|s|}\sum_{s' \in \ftw(s)}S_{\beta, 0, k-1}(s')+\dfrac{1}{|s|}\sum_{s' \in \fst(s)}S_{\beta, 0, k-2}(s')+\dfrac{1}{|s|}\sum_{s' \in \fs(s)}S_{\beta, 0, k}(s')\\
&=b_{0, k-1}(s)+\dfrac{1}{|s|}\sum_{s' \in \ftw(s)}b_{0, k-1}(s')+\dfrac{1}{|s|}\sum_{s' \in \fst(s)}b_{0, k-2}(s')+\dfrac{1}{|s|}\sum_{s' \in \fs(s)}b_{0, k}(s')\\
&=b_{0,k}(s)\,.
\end{align*}
This completes the proof for triples of the form $(s, 0,k)$. Fixing $k$ as before, take some $i\geq 1$ and suppose that \eqref{sbik} holds for all $(s', i', k')$ where either $k'<k$, or $k'=k$ and $i'<i$. We will use induction on $\iota(s)$ to show that it also holds for $(s, i,k)$ for all $s$. 

If $s=\emptyset$ then both sides are zero. Take some $s\neq \emptyset$ and suppose that \eqref{sbik} holds for all $(s', i, k)$ with $\iota(s')<\iota(s)$. Note that any $X \in \mx_{i, k}(s)$ can be written as $(s, X')$ where either $X' \in \mx_{i, k-1}(s)$, or $X' \in  \mx_{i, k-1}(s')$ for some $s'\in \ftw(s)$, or $X' \in  \mx_{i, k-2}(s')$ for some $s'\in \fst(s)$, or $X' \in  \mx_{i, k}(s')$ for some $s'\in \fs(s)$, or $X' \in  \mx_{i-1, k}(s')$ for some $s'\in \fd(s)$. Therefore
\begin{align*}
S_{\beta, i, k}(s) &=\sum_{\substack{X=(s,X')\\X'\in \mx_{i, k-1}(s)}}|w_{\beta}(X)|+\sum_{s' \in \ftw(s)}\sum_{\substack{X=(s,X')\\X'\in \mx_{i, k-1}(s')}}|w_{\beta}(X)|+\sum_{s' \in \fst(s)}\sum_{\substack{X=(s,X')\\X'\in \mx_{i, k-2}(s')}}|w_{\beta}(X)| \notag \\
&\qquad+\sum_{s' \in \fs(s)}\sum_{\substack{X=(s,X')\\X'\in \mx_{i, k}(s')}}|w_{\beta}(X)|+\sum_{s' \in \fd(s)}\sum_{\substack{X=(s,X')\\X'\in \mx_{i-1, k}(s')}}|w_{\beta}(X)|\,. \notag 
\end{align*}
By the definition of $w_\beta(X)$,
\begin{align*}
\sum_{\substack{X=(s,X')\\X'\in \mx_{i, k-1}(s)}}|w_{\beta}(X)|&=\sum_{X'\in \mx_{i, k-1}(s)}|w_{\beta}(X')|\,.
\end{align*}
Similarly,
\begin{align*}
\sum_{s' \in \ftw(s)}\sum_{\substack{X=(s,X')\\X'\in \mx_{i, k-1}(s')}}|w_{\beta}(X)|&=\dfrac{1}{|s|}\sum_{s' \in \ftw(s)}\sum_{X'\in \mx_{i, k-1}(s')}|w_{\beta}(X')|\,,
\end{align*}
\begin{align*}
\sum_{s' \in \fst(s)}\sum_{\substack{X=(s,X')\\X'\in \mx_{i, k-2}(s')}}|w_{\beta}(X)|&=\dfrac{1}{|s|}\sum_{s' \in \fst(s)}\sum_{X'\in \mx_{i, k-2}(s')}|w_{\beta}(X')|\,,
\end{align*}
\begin{align*}
\sum_{s' \in \fs(s)}\sum_{\substack{X=(s,X')\\X'\in \mx_{i, k}(s')}}|w_{\beta}(X)|&=\dfrac{1}{|s|}\sum_{s' \in \fs(s)}\sum_{X'\in \mx_{i, k}(s')}|w_{\beta}(X')|\,,
\end{align*}
and
\begin{align*}
\sum_{s' \in \fd(s)}\sum_{\substack{X=(s,X')\\X'\in \mx_{i-1, k}(s')}}|w_{\beta}(X)|&=\dfrac{|\beta|}{|s|}\sum_{s' \in \fd(s)}\sum_{X'\in \mx_{i-1, k}(s')}|w_{\beta}(X')|\,.
\end{align*}
Putting together the last six displays, we get
\begin{align*}
S_{\beta, i, k}(s) &= S_{\beta, i, k-1}(s)+\dfrac{1}{|s|}\sum_{s' \in \ftw(s)}S_{\beta, i, k-1}(s')+\dfrac{1}{|s|}\sum_{s' \in \fst(s)}S_{\beta, i, k-2}(s')\\
&\qquad +\dfrac{1}{|s|}\sum_{s' \in \fs(s)}S_{\beta, i, k}(s')+\dfrac{|\beta|}{|s|}\sum_{s' \in \fd(s)}S_{\beta, i-1, k}(s')\,.
\end{align*}
Applying the induction hypothesis to the terms on the right gives
\begin{align*}
S_{\beta, i, k}(s)
&=b_{i, k-1}(s)|\beta|^i+\dfrac{1}{|s|}\sum_{s' \in \ftw(s)}b_{i, k-1}(s')|\beta|^i+\dfrac{1}{|s|}\sum_{s' \in \fst(s)}b_{i, k-2}(s')|\beta|^{i}\\
&\qquad +\dfrac{1}{|s|}\sum_{s' \in \fs(s)}b_{i, k}(s')|\beta|^i+\dfrac{|\beta|}{|s|}\sum_{s' \in \fd(s)}b_{i-1, k}(s')|\beta|^{i-1}\\
&=b_{i,k}(s)|\beta|^{i}\,.
\end{align*}
This completes induction.
\end{proof}
Lemma \ref{ansconvb} easily implies Theorem \ref{absconv}.
\begin{proof}[Proof of Theorem \ref{absconv}]
Since any $X\in \mx_{k}(s)$ is in exactly one of $\mx_{i,k}(s)$ for some $i\ge 0$,
\begin{align*}
\sum_{X\in \mx_k(s)}|w_{\beta}(X)|=\sum_{i=0}^{\infty}\sum_{X\in \mx_{i, k}(s)}|w_{\beta}(X)|=\sum_{i=0}^{\infty} S_{\beta, i, k}=\sum_{i=0}^{\infty}b_{i, k}|\beta|^{i}\,.
\end{align*}
By Lemma \ref{coeffb} this sum is convergent for all sufficiently small $|\beta|$ (depending only on $d$ and $k$).
\end{proof}
\section{Gauge-string duality}
We are  now ready to complete the proof of part (i) of Theorem \ref{mainthmofpaper}. The absolute convergence claim has already been proved in Theorem \ref{absconv}. Let
\begin{align*}
T_{\beta, i, k}(s):=\sum_{X\in \mx_{i, k}(s)}w_{\beta}(X)\,.
\end{align*}
By Lemma \ref{finite} the above sum has finite number of terms and therefore $T_{\beta, i, k}$ is well-defined. By Theorem \ref{series}, it suffices to prove that for sufficiently small $|\beta|$ (depending only on $d$ and $k$),
\begin{align}
T_{\beta, i, k}(s)&=a_{i,k}(s)\beta^i\,. \label{enough}
\end{align}
The base case $k=0$ was established in \cite{chatterjee15}. Take some $k\geq 1$ and suppose that~\eqref{enough} holds for all triples $(s', i', k')$ with $k'<k$. First we will use induction on $\iota(s)$ to prove the claim for triples of the form $(s, 0,k)$. If $s=\emptyset$, then both sides of \eqref{enough} are zero. Take some $s\neq \emptyset$ and suppose the claim holds for all triples $(s', 0, k)$ with $\iota(s')<\iota(s)$. Recall that any $X\in \mx_{0, k}(s)$ can be written as  $(s, X')$ where either $X'\in  \mx_{0, k-1}(s)$, or $X' \in  \mx_{0, k-1}(s')$ for some $s'\in \ftw(s)$, or $X' \in  \mx_{0, k-2}(s')$ for some $s'\in \fst(s)$, or $X' \in  \mx_{0, k}(s')$ for some $s'\in \fs(s)$.  Therefore
\begin{align*}
T_{\beta, 0, k}(s)
&=\sum_{\substack{X=(s,X')\\X'\in \mx_{0, k-1}(s)}}w_{\beta}(X)+\sum_{s' \in \ftw(s)}\sum_{\substack{X=(s,X')\\X'\in \mx_{0, k-1}(s')}}w_{\beta}(X)\\
&\quad+\sum_{s' \in \fst(s)}\sum_{\substack{X=(s,X')\\X'\in \mx_{0, k-2}(s')}}w_{\beta}(X)+\sum_{s' \in \fs(s)}\sum_{\substack{X=(s,X')\\X'\in \mx_{0, k}(s')}}w_{\beta}(X)\,. \notag
\end{align*}
Applying the definition of $w_\beta(X)$, we get
\begin{align*}
\sum_{\substack{X=(s,X')\\X'\in \mx_{0, k-1}(s)}}w_{\beta}(X)&=\sum_{X'\in \mx_{0, k-1}(s)}w_{\beta}(X')\, .
\end{align*}
Similarly,
\begin{align*}
\sum_{s' \in \ftw(s)}\sum_{\substack{X=(s,X')\\X'\in \mx_{0, k-1}(s')}}w_{\beta}(X)&=\dfrac{1}{|s|}\sum_{s' \in \ftw^-(s)}\sum_{X'\in \mx_{0, k-1}(s')}w_{\beta}(X')- \dfrac{1}{|s|}\sum_{s' \in \ftw^+(s)}\sum_{X'\in \mx_{0, k-1}(s')}w_{\beta}(X')\, ,
\end{align*}
\begin{align*}
\sum_{s' \in \fst(s)}\sum_{\substack{X=(s,X')\\X'\in \mx_{0, k-2}(s')}}w_{\beta}(X)&=\dfrac{1}{|s|}\sum_{s' \in \fst^-(s)}\sum_{X'\in \mx_{0, k-2}(s')}w_{\beta}(X') - \dfrac{1}{|s|}\sum_{s' \in \fst^+(s)}\sum_{X'\in \mx_{0, k-2}(s')}w_{\beta}(X')
\end{align*}
and
\begin{align*}
\sum_{s' \in \fs(s)}\sum_{\substack{X=(s,X')\\X'\in \mx_{0, k}(s')}}w_{\beta}(X)&=\dfrac{1}{|s|}\sum_{s' \in \fs^-(s)}\sum_{X'\in \mx_{0, k}(s')}w_{\beta}(X') - \dfrac{1}{|s|}\sum_{s' \in \fs^+(s)}\sum_{X'\in \mx_{0, k}(s')}w_{\beta}(X')\,.
\end{align*}
Putting together the last five displays gives 
\begin{align*}
T_{\beta, 0, k}(s) &=T_{\beta, 0, k-1}(s)+\dfrac{1}{|s|}\sum_{s' \in \ftw^-(s)}T_{\beta, 0, k-1}(s')- \dfrac{1}{|s|}\sum_{s' \in \ftw^+(s)}T_{\beta, 0, k-1}(s')\\
&\qquad + \dfrac{1}{|s|}\sum_{s' \in \fst^-(s)}T_{\beta, 0, k-2}(s')- \dfrac{1}{|s|}\sum_{s' \in \fst^+(s)}T_{\beta, 0, k-2}(s')\\
&\qquad +\dfrac{1}{|s|}\sum_{s' \in \fs^-(s)}T_{\beta, 0, k}(s')-\dfrac{1}{|s|}\sum_{s' \in \fs^+(s)}T_{\beta, 0, k}(s')\,.
\end{align*}
Applying the induction hypothesis to the right side and using Corollary \ref{symmcor}, we get
\begin{align*}
T_{\beta, 0, k}(s)&=a_{0, k-1}(s)+\dfrac{2}{|s|}\sum_{s' \in \ftw^-(s)}a_{0, k-1}(s')- \dfrac{2}{|s|}\sum_{s' \in \ftw^+(s)}a_{0, k-1}(s')\\
&\qquad +\dfrac{1}{|s|}\sum_{s' \in \fst^-(s)}a_{0, k-2}(s')- \dfrac{1}{|s|}\sum_{s' \in \fst^+(s)}a_{0, k-2}(s')\\
&\qquad +\dfrac{2}{|s|}\sum_{s' \in \fs^-(s)}a_{0, k}(s') - \dfrac{2}{|s|}\sum_{s' \in \fs^+(s)}a_{0, k}(s')\\
&=a_{0,k}(s)\,.
\end{align*}
This completes the proof for triples of the form $(s,0,k)$. Fixing $k$ as before, take some $i\geq 1$ and suppose that \eqref{enough} holds for all triples $(s', i',k')$ where either $k'<k$, or $k'=k$ and $i'<i$. We will use induction on $\iota(s)$ to prove that \eqref{enough} holds for $(s, i, k)$ for all $s$. If $s=\emptyset$, then both sides are zero. Take some $s\neq \emptyset$ and suppose that \eqref{enough} also holds for all $(s', i ,k)$ such that $\iota(s')<\iota(s)$. Note that any $X \in \mx_{i, k}(s)$ can be written as $(s, X')$ where either $X' \in  \mx_{i, k-1}(s)$, or $X' \in  \mx_{i, k-1}(s')$ for some $s'\in \ftw(s)$, or $X' \in  \mx_{i, k-2}(s')$ for some $s'\in \fst(s)$, or $X' \in  \mx_{i, k}(s')$ for some $s'\in \fs(s)$, or $X' \in  \mx_{i-1, k}(s')$ for some $s'\in \fd(s)$. Therefore
\begin{align*}
T_{\beta, i, k}(s)&=\sum_{\substack{X=(s,X')\\X'\in \mx_{i, k-1}(s)}}w_{\beta}(X)+\sum_{s' \in \ftw(s)}\sum_{\substack{X=(s,X')\\X'\in \mx_{i, k-1}(s')}}w_{\beta}(X)+\sum_{s' \in \fst(s)}\sum_{\substack{X=(s,X')\\X'\in \mx_{i, k-2}(s')}}w_{\beta}(X) \notag \\
&\qquad+\sum_{s' \in \fs(s)}\sum_{\substack{X=(s,X')\\X'\in \mx_{i, k}(s')}}w_{\beta}(X)+\sum_{s' \in \fd(s)}\sum_{\substack{X=(s,X')\\X'\in \mx_{i-1, k}(s')}}w_{\beta}(X)\,.
\end{align*}
By the definition of $w_\beta(X)$,
\begin{align*}
\sum_{\substack{X=(s,X')\\X'\in \mx_{i, k-1}(s)}}w_{\beta}(X)&=\sum_{X'\in \mx_{i, k-1}(s)}w_{\beta}(X')\, .
\end{align*} 
Similarly, 
\begin{align*}
\sum_{s' \in \ftw(s)}\sum_{\substack{X=(s,X')\\X'\in \mx_{i, k-1}(s')}}w_{\beta}(X)&=\dfrac{1}{|s|}\sum_{s' \in \ftw^-(s)}\sum_{X'\in \mx_{i, k-1}(s')}w_{\beta}(X') - \dfrac{1}{|s|}\sum_{s' \in \ftw^+(s)}\sum_{X'\in \mx_{i, k-1}(s')}w_{\beta}(X')\, ,
\end{align*}
\begin{align*}
\sum_{s' \in \fst(s)}\sum_{\substack{X=(s,X')\\X'\in \mx_{i, k-2}(s')}}w_{\beta}(X)&=\dfrac{1}{|s|}\sum_{s' \in \fst^-(s)}\sum_{X'\in \mx_{i, k-2}(s')}w_{\beta}(X') - \dfrac{1}{|s|}\sum_{s' \in \fst^+(s)}\sum_{X'\in \mx_{i, k-2}(s')}w_{\beta}(X')\,,
\end{align*}
\begin{align*}
\sum_{s' \in \fs(s)}\sum_{\substack{X=(s,X')\\X'\in \mx_{i, k}(s')}}w_{\beta}(X)&=\dfrac{1}{|s|}\sum_{s' \in \fs^-(s)}\sum_{X'\in \mx_{i, k}(s')}w_{\beta}(X') -\dfrac{1}{|s|}\sum_{s' \in \fs^+(s)}\sum_{X'\in \mx_{i, k}(s')}w_{\beta}(X')\,,
\end{align*}
and
\begin{align*}
\sum_{s' \in \fd(s)}\sum_{\substack{X=(s,X')\\X'\in \mx_{i-1, k}(s')}}w_{\beta}(X)&=\dfrac{\beta}{|s|}\sum_{s' \in \fd^-(s)}\sum_{X'\in \mx_{i-1, k}(s')}w_{\beta}(X') - \dfrac{\beta}{|s|}\sum_{s' \in \fd^+(s)}\sum_{X'\in \mx_{i-1, k}(s')}w_{\beta}(X')\,.
\end{align*}
Combining the last six displays  gives
\begin{align*}
T_{\beta, i, k}(s)&=T_{\beta, i, k-1}(s)+\dfrac{1}{|s|}\sum_{s' \in \ftw^-(s)}T_{\beta, i, k-1}(s') - \dfrac{1}{|s|}\sum_{s' \in \ftw^+(s)}T_{\beta, i, k-1}(s')\\
&\qquad +\dfrac{1}{|s|}\sum_{s' \in \fst^-(s)}T_{\beta, i, k-2}(s') - \dfrac{1}{|s|}\sum_{s' \in \fst^+(s)}T_{\beta, i, k-2}(s')\\
&\qquad +\dfrac{1}{|s|}\sum_{s' \in \fs^-(s)} T_{\beta, i, k}(s') - \dfrac{1}{|s|}\sum_{s' \in \fs^+(s)} T_{\beta, i, k}(s')\\
&\qquad +\dfrac{\beta}{|s|}\sum_{s' \in \fd^-(s)} T_{\beta, i-1, k}(s')- \dfrac{\beta}{|s|}\sum_{s' \in \fd^+(s)} T_{\beta, i-1, k}(s')\,.
\end{align*}
Applying the induction hypothesis to the right side, and using Corollary \ref{symmcor}, we get
\begin{align*}
T_{\beta, i, k}(s)&=a_{i, k-1}(s)\beta^i+\dfrac{1}{|s|}\sum_{s' \in \ftw^-(s)}a_{i, k-1}(s')\beta^i- \dfrac{1}{|s|}\sum_{s' \in \ftw^+(s)}a_{i, k-1}(s')\beta^i\\
&\qquad +\dfrac{1}{|s|}\sum_{s' \in \fst^-(s)}a_{i, k-2}(s')\beta^{i}-\dfrac{1}{|s|}\sum_{s' \in \fst^+(s)}a_{i, k-2}(s')\beta^{i}\\
&\qquad+\dfrac{1}{|s|}\sum_{s' \in \fs^-(s)} a_{i, k}(s')\beta^i-\dfrac{1}{|s|}\sum_{s' \in \fs^+(s)} a_{i, k}(s')\beta^i\\
&\qquad+\dfrac{\beta}{|s|}\sum_{s' \in \fd^-(s)} a_{i-1, k}(s')\beta^{i-1}- \dfrac{\beta}{|s|}\sum_{s' \in \fd^+(s)} a_{i-1, k}(s')\beta^{i-1}\\
&=a_{i,k}(s)\beta^{i}\,.
\end{align*}
This completes the proof of part (i) of Theorem \ref{mainthmofpaper}.




\section*{Acknowledgments}
The authors thank David Brydges and Steve Shenker for helpful comments. 

\bibliographystyle{amsplain}

\end{document}